\documentclass[a4paper,11pt,oneside,dvipsnames,reqno]{amsart} 
\usepackage[foot]{amsaddr}

\usepackage{amsmath,amsfonts,amsthm,amssymb}

\usepackage[UKenglish, shorthands=off]{babel}
\usepackage{csquotes}

\usepackage[T1]{fontenc}
\usepackage{newpxtext,newpxmath}
\usepackage{microtype}
\usepackage{multirow} 
\usepackage{diagbox}
\usepackage{xcolor}
\usepackage{graphicx}
\usepackage{mathtools}

\usepackage{hyperref}
\hypersetup{colorlinks=true, citecolor=PineGreen, linkcolor=RoyalBlue, linktoc=page,urlcolor=RoyalBlue}
\usepackage{cleveref}

\frenchspacing

\theoremstyle{plain}
\newtheorem{theorem}{Theorem}[section] 
\newtheorem{lemma}[theorem]{Lemma}
\newtheorem{prop}[theorem]{Proposition}
\newtheorem{cor}[theorem]{Corollary}

\newtheorem{conj}{Conjecture}
\newtheorem*{conj*}{Conjecture}

\theoremstyle{remark}
\newtheorem{rem}[theorem]{Remark}

\theoremstyle{definition}


\numberwithin{equation}{section}


\newcommand{\var}{\operatorname{var}}

\newcommand{\A}{\mathbb{A}}

\newcommand{\R}{\mathbb{R}}

\newcommand{\C}{\mathbb{C}}

\newcommand{\Hb}{\mathbb{H}}

\newcommand{\Z}{\mathbb{Z}}

\newcommand{\Q}{\mathbb{Q}}


\newcommand{\Bc}{\mathcal{B}}

\newcommand{\Oc}{\mathcal{O}}


\newcommand{\qf}{\mathfrak{q}}



\newcommand{\new}{\mathrm{new}}





\newcommand{\GL}{\operatorname{GL}}
\newcommand{\PGL}{\operatorname{PGL}}
\newcommand{\SL}{\operatorname{SL}}

\newcommand{\M}{\operatorname{M}}

\newcommand{\sym}{\operatorname{sym}}

\newcommand{\SO}{\operatorname{SO}}




\newcommand{\nr}{\operatorname{nr}}

\renewcommand{\Re}{\operatorname{Re}}
\renewcommand{\Im}{\operatorname{Im}}


\newcommand{\sgn}{\operatorname{sgn}}

\newcommand{\disc}{\operatorname{disc}}

\newcommand{\ad}{\operatorname{ad}}

\newcommand{\vol}{\operatorname{vol}}

\newcommand{\Cl}{\operatorname{Cl}}

\newcommand{\diag}{\operatorname{diag}}

\newcommand{\Stein}{\operatorname{St}}













\newcommand{\abs}[1]{\lvert{#1}\rvert}

\newcommand{\norm}[1]{\left\lVert#1\right\rVert}

\newcommand{\inner}[1]{\langle #1\rangle}



\hyphenation{Dif-feren-tiating}

\usepackage[disable]{todonotes}

\usepackage[style = alphabetic, backend = bibtex, abbreviate = true, arxiv = abs, doi = false, isbn = false, url = false, giveninits = true]{biblatex}
\addbibresource{que.bib}
\renewbibmacro{in:}{} 
\AtEveryBibitem{%
  \clearlist{language}%
}

\title{Joint equidistribution of newforms}

\author{Asbjørn Christian Nordentoft$^1$}
\address{$^1$University of Copenhagen, Universitetsparken 5, 2100
Copenhagen Ø, Denmark}
\email{nordentoft@math.ku.dk}
\author{Radu Toma$^2$}
\address{$^2$Sorbonne Université, Université Paris Cité, CNRS, IMJ-PRG, F-75005 Paris, France}
\email{toma@imj-prg.fr}

\begin{document}

\begin{abstract}
    Let $Y_1$ be a compact arithmetic hyperbolic surface associated to a maximal quaternion order, let $Y_q$ be a cover associated to an Eichler suborder of prime level $q$, and let $\iota_q$ be embedding of $Y_q$ as the Hecke correspondence into $Y_1 \times Y_1$. Let $\mu_1$ and $\mu_q$ be the invariant probability measures on $Y_1$ and $Y_q$, respectively. If $F$ is a newform on $Y_q$, we conjecture that the pushforward measure $(\iota_q)_\ast(\abs{F}^2 \mu_q)$ converges weakly to the uniform measure $\mu_1 \times \mu_1$, as $q$ tends to infinity. We prove this conjecture with an effective rate of equidistribution, assuming GRH.
\end{abstract}
\maketitle

\tableofcontents

\section{Introduction}
Understanding the distribution of mass of Laplace eigenfunctions on hyperbolic manifolds is a profound subject with roots in quantum chaos (see \cite{Berry02}).
It has proven particularly fruitful to consider arithmetic manifolds and employ number theory to address this problem (see \cite{sarnak-aque}).
This article proposes a novel and deeper study of the interplay between quantum chaos and number theory by investigating an equidistribution problem that combines both quantum chaos and the equidistribution of Hecke points.

Let $B$ be an indefinite quaternion algebra over $\Q$ (in this introduction, the reader may simply consider the split case $B = M_2$). 
Choose a maximal order and, for each integer $q\geq 1$ coprime to the discriminant of $B$, take an Eichler order of level $q$ and define its associated hyperbolic surface $Y_q=\Gamma_q\backslash \Hb$ (in the split case, these are the modular curves $Y_0(q)$ of level $q$).
We endow $Y_1$ and $Y_q$ with invariant probability measures $\mu_1$ and $\mu_q$, respectively.

There are two natural covering maps $\alpha_q,\beta_q: Y_q \rightarrow Y_1$, given by $z\mapsto z$ and $z\mapsto qz$ in the split case.
Together, they define the \emph{Hecke correspondence}
$$\iota_q: Y_q\rightarrow Y_1\times Y_1,$$
by setting $\iota_q=(\alpha_q,\beta_q)$.

If $F$ is a cuspidal Hecke--Maaß newform on $Y_q$, $L^2$-normalized with respect to ~$\mu_q$, then $\mu_F := \abs{F}^2 \mu_q$ defines a new probability measure on~$Y_q$.
It was proposed by Kowalski--Michel--VanderKam \cite{Kowalski2002} that the push-forward measures~$(\alpha_q)_\ast \mu_F$ on~$Y_1$ should converge to the uniform measure~$\mu_1$ as $q$ goes to infinity.
This is an analogue of the original quantum unique ergodicity conjecture of Rudnick and Sarnak \cite{rudnick-sarnak}, which states that $\abs{\psi}^2 \mu_1$ converges to $\mu_1$ as the eigenvalue of a Maaß cuspform $\psi$ on $Y_1$ tends to infinity.
Note that assuming $F$ is a newform is important, although not quite necessary.\footnote{We recall that, at least for $q$ prime, newforms are defined as those automorphic forms orthogonal to old forms of the shape $\alpha_q^\ast(\psi)$ and $\beta_q^\ast(\psi)$ for $\psi$ defined on $Y_1$.
Clearly, requiring that $F$ be a newform in the conjecture of \cite{Kowalski2002} is wise, since the measures attached to old forms $\alpha_q^\ast(\psi)$ for some cuspform $\psi$ on $Y_1$ are always equal to $\mu_{\psi}$, which is independent of $q$.
However, the newform assumption is not quite necessary, and it is an amusing observation that the measures attached to old forms $\beta_q^\ast(\psi)$ \emph{do} converge to the uniform measure.
Indeed, this reduces to the equidistribution of Hecke points, as can be seen in the definition of the Hecke operator in \eqref{eq:hecke-op} below.}

We propose the following stronger conjecture, which combines the two maps $\alpha_q$ and $\beta_q$ and takes the equidistribution of the Hecke correspondence into account.
\begin{conj}\label{conj:mixingQUE}
    Let $(F_q)$ be a sequence of Hecke--Maaß newforms, where $F_q$ is defined on $Y_q$.
    Then, as $q$ goes to infinity, the measures $(\iota_q)_\ast \mu_{F_q}$ converge to the uniform measure $\mu_1 \otimes \mu_1$ on $Y_1 \times Y_1$.
\end{conj}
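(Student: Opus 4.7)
The plan is as follows. To establish weak convergence on $Y_1 \times Y_1$, by Stone--Weierstrass and the spectral theorem on the compact surface $Y_1$ it suffices to test against product functions $f_1 \otimes f_2$ in which each factor is either the constant $1$ or a Hecke--Maass cuspform on $Y_1$. The case of two constants is the trivial normalization, and when exactly one factor is a cuspform the statement reduces to the level-aspect QUE conjecture of Kowalski--Michel--VanderKam for the corresponding projection $\alpha_q$ or $\beta_q$, which under GRH follows from Watson's and Ichino's triple product formulas together with the standard GRH upper bounds on the central $L$-values and the lower bound $L(1, \Ad F_q) \gg q^{-\epsilon}$.

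The genuinely new case is $f_1 = \psi_1$, $f_2 = \psi_2$ both Hecke cuspforms, for which we must prove
$$I(q) := \int_{Y_q} \psi_1(\alpha_q z)\,\psi_2(\beta_q z)\,|F_q(z)|^2\,d\mu_q(z) \longrightarrow 0.$$
I would spectrally decompose the test function $\Psi := (\psi_1 \circ \alpha_q)(\psi_2 \circ \beta_q) = \sum_\phi c_\phi\,\phi$ in a Hecke eigenbasis of $L^2(Y_q)$. The constant coefficient $c_{\triv}$ is evaluated through the Hecke correspondence identity $(\alpha_q)_\ast \beta_q^\ast = T_q$: it is a scalar multiple of $\lambda_{\psi_2}(q)\langle \psi_1, \psi_2\rangle_{Y_1}/(q+1)$, which tends to zero by the Hecke eigenvalue bound $|\lambda_{\psi_2}(q)| \ll q^\theta$ for some $\theta < 1/2$. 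For the cuspidal $\phi$, I would estimate $\langle |F_q|^2, \phi\rangle$ using Ichino's formula, which expresses it through central values of $L(\tfrac{1}{2}, \Ad F_q \otimes \phi)\,L(\tfrac{1}{2}, \phi)$ divided by $L(1, \Ad F_q)^2\,L(1, \Ad \phi)$ together with explicit local factors at $q$; under GRH this yields an individual estimate of the form $|\langle |F_q|^2, \phi\rangle| \ll q^{-1/2 + \epsilon} C_\phi^\epsilon$.

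Assembling the bounds is the last step. Parseval gives $\sum_\phi |c_\phi|^2 = \|\Psi\|_{L^2(Y_q)}^2 = O(1)$, so a plain Cauchy--Schwarz loses a factor of $\|F_q\|_4^2$ and is insufficient; one must exploit the special structure of $\Psi$ as a product of oldforms. Only those $\phi$ whose Hecke eigenvalues are compatible with $\psi_1, \psi_2$ in the Rankin--Selberg sense can contribute nontrivially, and the smoothness of the fixed $\psi_i$ forces polynomial decay of $c_\phi$ in the spectral parameter of $\phi$; combining these with the triple product estimate above yields the effective polynomial decay $I(q) \ll q^{-\delta}$. The main obstacle, and the reason GRH is required, is the precise evaluation of Ichino's local factors at the ramified prime $q$ for test vectors attached to the two different lifts $\alpha_q$ and $\beta_q$: a single misplaced power of $q$ in this local calculation would overwhelm the global gain, so a careful analysis of the new- and oldform structure on $Y_q$ is essential throughout.
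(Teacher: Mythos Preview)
Your setup through the spectral expansion is correct and matches the paper: Weyl's criterion, the case of one constant factor via Watson--Ichino and GLH, and the expansion of $\Psi$ over a Hecke eigenbasis of $Y_q$. The treatment of the constant and oldform coefficients is also essentially right.

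The gap is in the final assembly. After the spectral truncation in $t_\phi$ (which the paper also performs, via the archimedean gamma factors), the sum over newforms $\phi$ on $Y_q$ with $t_\phi \le 100$ still has $\asymp q$ terms by the Weyl law. For each such $\phi$, Watson--Ichino gives
\[
\bigl|\langle f_1 \cdot l_q f_2,\phi\rangle_q \cdot \langle \phi,|F|^2\rangle_q\bigr| \;\asymp\; q^{-1}\,\mathcal{L}(\phi)^{1/2},\qquad \mathcal{L}(\phi)=L(\tfrac12,F\otimes F\otimes\phi)\,L(\tfrac12,f_1\otimes f_2\otimes\phi),
\]
so under GLH the pointwise bound is only $q^{-1+\varepsilon}$ per term, and summing over $\asymp q$ terms gives $q^{\varepsilon}$, not $o(1)$. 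Your sentence ``only those $\phi$ whose Hecke eigenvalues are compatible with $\psi_1,\psi_2$ in the Rankin--Selberg sense can contribute nontrivially'' has no precise content here: generically \emph{all} newforms of bounded eigenvalue contribute, and there is no mechanism to cut their number below $q$. Consequently your claimed polynomial saving $I(q)\ll q^{-\delta}$ is not attainable by this route; indeed the paper's theorem only achieves $(\log q)^{-1/4+\varepsilon}$.

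You also misidentify the role of GRH. The local Ichino constants at $q$ are fully computed (Watson, Nelson, Woodbury) and are not the bottleneck. What forces GRH is that one must bound the \emph{fractional moment}
\[
\sum_{\phi\in\mathcal{B}_{\mathrm{new}}(Y_q),\ t_\phi\le 100}\mathcal{L}(\phi)^{1/2}
\]
below $q(\log q)^{-\delta}$. The paper does this via Soundararajan's method: under GRH one upper-bounds $\log\mathcal{L}(\phi)$ by a short prime sum, models it as an approximate Gaussian with mean $\asymp -\log\log q$ and variance $\asymp \log\log q$, and then controls high moments of these short Dirichlet polynomials with the Kuznetsov formula. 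This entire fractional-moment argument is absent from your proposal and is the heart of the proof.
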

A similar conjecture can be made for holomorphic modular forms.
Notice that, since $\alpha_q$ is equal to the composition of $\iota_q$ and the projection to the first factor, this conjecture implies the one in \cite{Kowalski2002}.

In \cite{nelson}, Nelson proves the Kowalski--Michel--VanderKam conjecture for holomorphic modular forms, generalizing the work of Holowinsky--Soundararajan \cite{holo-sound} and Watson \cite{watson}.
Nelson's work also shows that the case of Maaß forms follows from conjectured subconvexity results for certain $L$-functions, or the Generalized Lindelöf Hypothesis (GLH), more generally.

The main result of the present paper is the following.
\begin{theorem}\label{thm:main-intro}
For $B$ a division algebra, along a sequence of prime levels $q$, Conjecture~\ref{conj:mixingQUE} holds under the Generalized Riemann Hypothesis (GRH) with an effective rate $(\log q)^{-1/4+\varepsilon}$ for any $\varepsilon>0$. 
\end{theorem}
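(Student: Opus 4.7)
The plan is to apply Weyl's equidistribution criterion and test the pushforward measure $(\iota_q)_\ast \mu_F$ against the family of product functions $\psi_1 \otimes \psi_2$, where $\psi_1, \psi_2$ range over a Laplace--Hecke orthonormal basis of $L^2(Y_1)$; since $B$ is a division algebra, $Y_1$ is compact and this basis consists entirely of cusp forms. The cases in which one of $\psi_1,\psi_2$ is the constant function reduce to the one-sided Kowalski--Michel--VanderKam conjecture, which was established by Nelson under the subconvexity hypotheses implied by GRH. The genuinely new content is therefore a bound on the period
\[
P_q(\psi_1,\psi_2) := \int_{Y_q} \psi_1(\alpha_q z)\,\psi_2(\beta_q z)\,|F(z)|^2\, d\mu_q(z),
\]
with an effective decay rate, uniformly in non-constant Hecke eigenforms $\psi_1,\psi_2$.

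My approach is spectral. I would write $|F|^2 = 1 + H$ with $H$ of mean zero. The contribution of the $1$-piece collapses, via the standard Hecke-operator identity, to $(q+1)^{-1}\langle T_q\psi_2, \overline{\psi_1}\rangle_{Y_1}$, which is easily seen to be $O(q^{-1/2})$. For the $H$-piece I expand $H$ on the compact quotient $Y_q$ in a Hecke--Maass cusp-form basis $\{\phi\}$ (with no Eisenstein spectrum to worry about), giving
\[
P_q(\psi_1,\psi_2) = O(q^{-1/2}) + \sum_{\phi} \overline{\langle H, \phi\rangle_{Y_q}}\; \langle (\alpha_q^\ast\psi_1)(\beta_q^\ast\psi_2),\,\phi\rangle_{Y_q}.
\]
Both factors are triple periods to which Ichino's formula applies. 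The first produces $|\langle |F|^2, \phi\rangle_{Y_q}|^2$ in terms of the central value $L(1/2, \phi\otimes \Ad F)$, together with $L(1/2,\phi)$ and standard $L(1)$-factors; the second, with $\beta_q^\ast$ translating the local spherical vector at $q$, factorizes into $L(1/2, \phi\otimes \sigma_1\otimes\sigma_2)$ times explicit local matrix coefficients, where $\sigma_i$ denotes the automorphic representation generated by $\psi_i$.

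Under GRH one may then apply Soundararajan's weak-subconvexity estimate, yielding a saving of $(\log C)^{-1+\varepsilon}$ over the convex bound $C^{1/4}$ for each of the central $L$-values. Combining this saving across both factors, applying Cauchy--Schwarz in the spectral sum over $\phi$, and controlling the high-frequency tail by a large-sieve-type inequality should produce the stated rate $(\log q)^{-1/4+\varepsilon}$, the exponent $1/4$ arising naturally from halving the Soundararajan log-saving via Cauchy--Schwarz. The main obstacle I expect is the adelic local harmonic analysis at the place $q$: both $F$ and the translate $\beta_q^\ast \psi_2$ are non-trivial level-$q$ vectors, so the local Ichino integrals at $q$ must be evaluated explicitly in order to identify the precise global $L$-function appearing and, crucially, to extract the exact power of $q$ needed for the Soundararajan savings to translate into the global rate on $P_q(\psi_1,\psi_2)$.
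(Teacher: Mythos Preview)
Your spectral setup is exactly right and matches the paper: Weyl's criterion, reduction of the one-constant case to the Kowalski--Michel--VanderKam conjecture, spectral expansion of $|F|^2$ over a Hecke--Maa{\ss} basis of the compact quotient $Y_q$, and the Watson--Ichino formula applied to both triple products. The local analysis at $q$ you anticipate is indeed carried out in the paper (Proposition~\ref{prop:watson-ichino}) and produces the factor $q^{-1}$ in each period.

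The gap is in the endgame. After taking absolute values and inserting Watson--Ichino, the Weyl sum is bounded by
\[
\frac{1}{q}\sum_{\substack{\varphi \in \Bc_{\mathrm{new}}(Y_q)\\ t_\varphi \ll 1}} \sqrt{L(1/2, F\otimes F\otimes \varphi)\,L(1/2, \psi_1\otimes \psi_2\otimes \varphi)}\,.
\]
The archimedean decay of the $\psi_1\otimes\psi_2$ period truncates the sum to $t_\varphi$ bounded, and by the Weyl law there are $\asymp q$ such $\varphi$. Thus, \emph{even under GLH}, each summand is $q^\varepsilon$ and the total is $q^\varepsilon$, with no decay; the paper makes this point explicitly in the proof sketch (Section~1.2). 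Soundararajan's weak subconvexity is far weaker than GLH, so it cannot close this gap. Neither can Cauchy--Schwarz: applied to split the two $L$-values it yields first moments of size $\asymp q$, hence again $O(1)$; applied via Parseval it gives $\|F\|_4^2\cdot\|\psi_1\cdot l_q\psi_2\|_2$, again $O(1)$.

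What the paper actually does is treat the display above as a \emph{fractional moment} problem and exploit the fact that, on average over $\varphi$, each central $L$-value is slightly smaller than $1$, of size $(\log q)^{-\delta}$. Under GRH one uses Soundararajan's inequality to bound $\log L(1/2,\,\cdot\,)$ by a short Dirichlet polynomial in $\lambda_\varphi(p)$; high moments of this polynomial are then computed via Kuznetsov (Proposition~\ref{prop:key}), confirming the Gaussian heuristic that $\log \mathcal{L}(\varphi)$ has mean $\mu_q \sim -c_1\log\log q$ and variance $\var_q \sim c_2\log\log q$. The rate $(\log q)^{-1/4+\varepsilon}$ then arises from $\exp(\tfrac12 \mu_q + \tfrac18 \var_q)$ in the worst case $\psi_1=\psi_2$ (where $c_1=2$, $c_2=6$), not from halving a weak-subconvexity log-saving. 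Your proposal is missing this entire mechanism.
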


\subsection{Comparison with the mixing conjecture}
Conjecture~\ref{conj:mixingQUE} and Theorem~\ref{thm:main-intro} share many features with the Mixing Conjecture of Michel--Venkatesh \cite{michel-venkatesh} and the currently available partial results towards it.
To state it, let $D < 0$ be a fundamental discriminant and let $\Cl(D)$ be the class group of $\Q(\sqrt D)$.
Take a class $\sigma \in \Cl(D)$ and denote by $q$ be the minimal norm of an integral ideal in $\sigma$.
Now consider a sequence of such data $(D_i, \sigma_i, q_i)$ and let $\mathcal{H}_{D_i}$ be the set of Heegner points in the modular curve $Y_0(1)$ attached to $\Cl(D_i)$.
Michel and Venkatesh conjecture that, as $q_i$ tends to infinity (and $D_i \to \infty$ implicitly, as discussed below), the sets
\begin{displaymath}
    \{(z, \sigma_i z) \mid z \in \mathcal{H}_{D_i}\} \subset Y_0(1) \times Y_0(1)
\end{displaymath}
equidistribute with respect to the uniform product measure $\mu_1 \otimes \mu_1$.
Note that, when projecting to one of the factors, equidistribution was already shown by Duke in \cite{duke} using subconvexity of $L$-functions (in the guise of bounds for Fourier coefficients of half-integral weight forms).

We recall Minkowski's theorem (see \cite[Sec.~6.5]{michel-venkatesh}), which implies that $q = O(\sqrt{\abs{D}})$.
In particular, if $q \to \infty$, then $D \to \infty$, the latter being a condition that is commonly found in the statement of the Mixing Conjecture.
We also observe that the set $\{(z, \sigma_i z) \mid z \in \mathcal{H}_{D_i}\}$ lies in the Hecke correspondence $\iota_q(Y_0(q))$, as explained in Section 2.2 of \cite{blomer2022mixingconjecturegrh}.

Thus, the Mixing Conjecture is about point masses inside a Hecke correspondence and their equidistribution inside the product space, given that the projections are already known to equidistribute.
This is similar to Conjecture~\ref{conj:mixingQUE}, where we replace the points by the mass of an automorphic form.
However, we note that the required subconvexity to prove the equidistribution in the projections, i.e. the conjecture of Kowalski--Michel--VanderKam, is not yet known in full generality.

To deepen the analogy, we suggest comparing the quantity $D/q^2$ to the Laplace eigenvalue $\lambda_F$.
Minkowski's theorem shows that $D/q^2 \gg 1$ and the uniform spectral gap or Selberg's conjecture imply that $\lambda_F \gg 1$.
Moreover, if $D/q^2 \gg q^\eta$ and $\lambda_F \gg q^\eta$ for some $\eta > 0$, we expect both conjectures to be easier.
This is the idea of equidistribution \emph{in stages} explored by Ellenberg--Michel--Venkatesh \cite{EMV} for the Mixing Conjecture (see  \cite[Sec.~2.5]{blomer2022mixingconjecturegrh} for a nice discussion).
Namely, if measures on subsets $Y_i$ of a space $X$  converge fast enough to the uniform measure on $Y_i$ and, in turn, the uniform measures on $Y_i$ converge fast enough to the uniform measure on $X$, then one may piece these facts together and obtain full equidistribution.

In both settings, the Hecke correspondences equidistribute fast by the spectral gap of Hecke operators.
In the Mixing Conjecture, we recall that $\mathcal{H}_D$ contains approximately $\sqrt{D}$ points and the $q$-th Hecke correspondence has volume approximately $q$.
We thus need $\sqrt{D}$ much larger than $q$ to have good equidistribution in the correspondence.
Similarly, for a version of quantum unique ergodicity to manifest itself on the correspondence, we would need a large eigenvalue $\lambda_F$.
In Section \ref{sec:equid-stages}, we show that GLH is enough to prove our conjecture under the assumption $\lambda_F \gg q^\eta$.
The difficult range $\lambda_F \asymp 1$ then requires the stronger input of GRH.

In terms of results towards the two conjectures, the present paper plays a similar role as the work of Blomer--Brumley \cite{Blomer2024} and Blomer--Brumley--Khayutin \cite{blomer2022mixingconjecturegrh}, where the Mixing Conjecture and related results are proved under GRH.
We point out that, using spectral methods, Conjecture~\ref{conj:mixingQUE} involves $L$-functions of higher degree, namely
\begin{displaymath}
    L(1/2, \sym^2 F \times \varphi) L(1/2, \varphi),
\end{displaymath}
where $F$ and $\varphi$ are newforms, whilst Blomer--Brumley--Khayutin deal with
\begin{displaymath}
    L(1/2, \varphi \times \chi_D) L(1/2, \varphi),
\end{displaymath}
where $\chi_D$ is a quadratic character\footnote{We note that, for the difficult range of parameters $q\asymp D^{1/2}$, both \cite{Blomer2024} and \cite{blomer2022mixingconjecturegrh} consider a spectral expansion, not over the Hecke correspondence, but over the torus of discriminant $-D$. This results in the relevant family being given by twists by class group characters $\chi$, of the shape $L(1/2, f_1\otimes \theta_\chi)L(1/2, f_2\otimes \theta_\chi)$, where $f_1,f_2$ are Hecke--Maa{\ss} forms on $Y_1$ and $\theta_\chi$ denotes the theta series associated to $\chi$.}.

\subsection{Proof sketch} 
Our approach is spectral in nature, relying on the Watson--Ichino formula, Proposition~\ref{prop:watson-ichino} below, to relate the problem to $L$-functions. 
We recall that, in the context of Theorem~\ref{thm:main-intro}, the spaces $Y_1$ and $Y_q$ are compact.
Thus, to show equidistribution by Weyl's criterion, Lemma~\ref{lem:Weyl} below, it suffices to show for any $f_1,f_2$ Hecke--Maa{\ss} forms on $Y_1$, not both constant, that
\begin{displaymath}
    \langle |F|^2, f_1 \cdot (l_q f_2)\rangle_q\rightarrow 0,\quad \text{as } q\rightarrow \infty,
\end{displaymath}
where $\langle\cdot,\cdot\rangle_q$ is the inner product on $Y_q$ normalized so that $\langle 1,1 \rangle_q=1$ and $(l_q f_2)(z):= (\beta_q)_\ast f_2(z)$ (for the split algebra, this is $z \mapsto f_2(qz)$). 
If either one of $f_1,f_2$ is constant, then we are reduced to the setting of the Kowalski--Michel--VanderKam conjecture and we have by the Watson--Ichino formula that
\begin{displaymath}
  |\langle |F|^2, f_1  f_2)\rangle_q|^2 \approx q^{-1} (|t_F|+1)^{-1/2} L(1/2,F\otimes F\otimes f_i)
\end{displaymath}
which is $o(1)$ given any subconvexity bound in $q$ and convexity exponent in $t_F$.
This is the content of Proposition~\ref{prop:que-level-glh}.

If both $f_1,f_2$ are non-constant, we get by spectrally expanding over a Hecke eigenbasis $\Bc_0(Y_q) \cup \{1\}$ of $Y_q$ that 
\begin{displaymath}
  \langle |F|^2, f_1 \cdot (l_q f_2)\rangle_q =\langle 1, f_1 \cdot (l_q f_2)\rangle_q+\sum_{ \varphi \in \Bc_0(Y_q) }   \langle |F|^2, \varphi \rangle_q  \langle \varphi, f_1 \cdot (l_q f_2)\rangle_q.
\end{displaymath}
Next, we apply the Watson--Ichino formula and the description of the Hecke operator in terms of the Hecke correspondence, as given by Lemma~\ref{lemma:hecke-corr-op}, obtaining that the expression above is
\begin{equation} \label{eq:intro-spec-expansion}
    \approx \delta_{f_1=f_2}\tilde{\lambda}_{f_2}(q)+q^{-1} (t_F+1)^{-1/4}\sum_{ \varphi \in \Bc_0(Y_q) , t_\varphi\leq 100 } \pm \mathcal{L}(\varphi)^{1/2} ,
\end{equation}
where\footnote{Here we are ignoring all factors including $L$-functions at $s=1$.}
$$\mathcal{L}(\varphi)=L(1/2,F\otimes F\otimes \varphi)L(1/2,f_1\otimes f_2\otimes \varphi),$$
$\tilde{\lambda}_{f_2}(q)$ denotes the geometrically normalized Hecke eigenvalue (so that Ramanujan's conjecture is $|\tilde{\lambda}_{f_2}(q)|\ll q^{-1/2} $) and $\pm$ denotes a sign. 
Here the truncation $t_\varphi\leq 100$ follows from the decay of the archimedean component of $L(1/2, f_1\otimes f_2\otimes \varphi)$. 
If we insert GLH, the Weyl law and bounds towards Ramanujan, then we may deduce that \eqref{eq:intro-spec-expansion} is
\begin{displaymath}
    \ll q^{-25/64}+ (t_F+1)^{-1/4+\varepsilon} q^\varepsilon,
\end{displaymath}
for any $\varepsilon > 0$.
Thus, we obtain the equidistribution result for $t_F\geq q^{\eta}$ for some fixed $\eta>0$, as $q\rightarrow \infty$. 
This is explored in more detail in Section \ref{sec:equid-stages} and matches exactly the idea of equidistribution \emph{in stages}, alluded to above.

We conclude that the hardest range is when $t_F \asymp 1$, in which case, under GLH, we  ``lose by an $\varepsilon$''. 
We again take absolute values\footnote{There is no known way to control the cancellation, which is most likely there, coming from the signs $\pm$.} in \eqref{eq:intro-spec-expansion} and reduce to a fractional moment problem, which requires us to access finer statistical information about $L$-functions at the central point $s=1/2$. 
We use the crucial fact that, on \emph{average} over $\varphi\in \Bc_0(Y_q)$, the two $L$-functions $L(1/2,F\otimes F\otimes \varphi)$ and $L(1/2,f_1\otimes f_2\otimes \varphi)$ are a little less than 1, of size $(\log q)^{-\delta}$ for some $\delta>0$. 
Thus, if the two families of $L$-functions are sufficiently independent, we would expect the average of the product to decay.

More precisely, we use that $\log L(1/2,F\otimes F\otimes \varphi)$ and $\log L(1/2,f_1\otimes f_2\otimes \varphi)$ behave like a (non-degenerate) two-dimensional Gau{\ss}ian distribution with mean and variance on the scale of $\log \log q$. 
We refer to Section \ref{sec:heuristics} for details on these heuristics and a calculation showing that they imply decay of the Weyl sums. 
Making these heuristics precise relies on a technique of Soundararajan \cite{Sound09}, which allows one to approximate $L$-functions at the central point by, essentially, a very short Euler product, \emph{assuming GRH}. 
Then, using the Kuznetsov trace formula, one can calculate high moments of short Dirichlet polynomials, which are consistent with the Gau{\ss}ian heuristics. 
Putting all of this together yields the bound 
\begin{displaymath}
    \sum_{\varphi\in \Bc_{0}(Y_q): t_\varphi\leq 100} \mathcal{L}(\varphi)^{1/2}\ll\begin{cases}
            q (\log q)^{-3/8+\varepsilon},& f_1\neq f_2;\\
            q (\log q)^{-1/4+\varepsilon},& f_1= f_2,
        \end{cases}
\end{displaymath}
as shown in our main technical result, Theorem~\ref{thm:mainestimate}.

The first instance of this technique, assuming GRH, in the context of equidistribution seems to be the work of Lester--Radziwiłł \cite{lester-radz} on QUE for half-integral weight forms. 
Soon after, Blomer--Brumley \cite{Blomer2024} applied it in the context of simultaneous equidistribution for toroidal orbits (simultaneous Duke's theorems), which was then used by Blomer--Brumley--Khayutin \cite{blomer2022mixingconjecturegrh} in the setting of the Mixing Conjecture\footnote{Note that in \cite{blomer2022mixingconjecturegrh} the Generalized Ramanujan Conjecture is also assumed but for a different part of the argument (the sieving part).}.
We point out also the recent work of J\"{a}\"{a}saari--Lester--Saha \cite{JaasaariLesterSaha25} on QUE for lifts on $\mathrm{GSp}(4)$ and of Huang \cite{bingrong} on joint equidistribution of eigenforms in the spectral aspect, assuming also the Generalized Ramanujan Conjecture (GRC). 

\begin{rem}\label{rem:eisenstein}
   Note that, due to the fact that $Y_q$ is compact, there is no continuous spectrum in spectral expansion. We are not currently able to resolve the fractional moment problem when $f_1,f_2$ are Eisenstein series. 
   This is analogous to the situation in \cite{Blomer2024}. 
   We can, however, bound the \emph{cuspidal} Weyl sums in the non-compact case $Y_0(q)$. 
\end{rem}

\subsubsection{New aspects of the proof}
The blueprint for bounding fractional moments under GRH was laid out in \cite{lester-radz}. 
In \emph{loc.\ cit.}\ the authors introduced a certain splitting into different ranges, according to the size of $\log \mathcal{L}(\varphi)$, and then applied different high moments estimates in each of the cases to bound the fractional moment. 
Applying this blueprint implies that, if one assumes GRH \emph{and} GRC, then one can recover the bound one obtains from the heuristics in Section \ref{sec:heuristics} (up to a factor $(\log q)^\varepsilon$). 
Here, we note that executing the heuristics involves analyzing the holomorphicity of the $L$-functions of various functorial lifts (e.g.\ $L(s,\sym^4 F)$). 

We now list some of the new features appearing in our treatment, comparing to \cite{lester-radz} and \cite{Blomer2024}. 
In particular, a lot of work goes into controlling various factors which are known to be small under both GRH and GRC.
\begin{itemize}
    \item First of all, we introduce a slight simplification of the splitting of ranges: by \emph{first} doing the change of variable by the mean $\mu_q$ and \emph{then} ruling out very large values, we avoid using any second moment estimates as in \cite[Lemma~5.6]{lester-radz} or \cite[Sec.~8]{Blomer2024}. See Table \ref{table:ranges} for details.
    
    \item The $L$-functions that show up are of degree up to $12$ and this pushes our knowledge of functoriality to the limit. 
    In particular, the automorphy of $\sym^4$ due to Kim \cite{Kim03} and the fact that the bound towards Ramanujan due to Kim--Sarnak \cite[Appendix 2]{Kim03} satisfies $\frac{7}{64}<1/8$ are crucial for our arguments. 
    These facts are gathered in Section \ref{sec:automorphicstuff} and appear in many places throughout the argument.
    
    \item In the case $f_1 = f_2$, the two $L$-functions we consider are actually \emph{not} independent on the $\log \log q$-scale, as opposed to the ones appearing in \cite{Blomer2024} and \cite{lester-radz}. 
    However, the dependence is fortunately compensated for by the mean of $\log L(1/2,f_1\otimes f_2\otimes \varphi)$ being a factor of $2$ more negative when $f_1=f_2$.
    
    \item \todo{One could say: 'A lastly, a very technical point.'}The method of Soundararajan yields an upper bound for $\log \mathcal{L}(\varphi)$ corresponding to taking the logarithm of a short truncation ($p<x$ for some $x\geq 2$) of the relevant Euler product, but with weights of the slightly awkward shape $p^{-1/\log x} \frac{\log x/p}{\log x}$. 
    Removing these weights without GRC requires some additional work.
    For comparison, in \cite{Blomer2024}, a certain positivity argument is used that is not available in our case. 
    In fact, in the case of $L(1/2,F\otimes F\otimes \varphi)$, the corresponding term is \emph{negative}. 
    To circumvent this, we do a change of variables by $(1-\varepsilon)\mu_q$ instead of the mean $\mu_q$ itself and use the negativity to our advantage. To make this work we are required to truncate at $x$ of larger size  compared to \cite[Cor.~7]{Blomer2024}, see eq.\ (\ref{eq:x}) below. 
    Ultimately, since the exponents in Lemmas \ref{lem:bulkrangezx} and \ref{lem:largerange1/2} are allowed to depend on $\varepsilon$, this is admissible. We refer to Remark \ref{rem:largerxGRHbound} for more details on this technical point.  
\end{itemize}

\begin{rem}\label{rem:generalq}
 The condition that $q$ is prime can be slightly weakened (e.g. a product of a bounded number of primes of roughly the same size).
 However, in the case where $q$ is, say, the product of the first $n$ primes, some new input is needed to deal with the old spectrum.
\end{rem}

\section{The Hecke correspondence} \label{sec:hecke}
\subsection{Quaternion algebras and arithmetic hyperbolic surfaces}
Let $B$ be an indefinite quaternion algebra over $\Q$ with reduced discriminant $\disc(B)=D_B$ and let $\Oc \subset B$ be a maximal order.
For a prime $q$ such that $(q, \disc(B)) = 1$, let $a_q \in \Oc$ satisfy $\nr(a_q) = q$ (for existence see Eichler's theorem on norms \cite[Cor.~28.6.1]{voight} and the fact that all maximal orders are conjugate \cite[Cor.~28.5.6]{voight}), and define
\begin{displaymath}
    \Oc_0(q) = a_q^{-1}\Oc a_q \cap \Oc.
\end{displaymath}
Thus, $\Oc_0(q)$ is an Eichler order (see \cite[Sec.~23.4]{voight}). 

Locally, at a prime $p \neq q$, the order $\Oc_0(q)_p$ is maximal, equal to $\Oc_p$.
At the prime $q$, the order $\Oc$ is isomorphic to $\M_2(\Z_q)$ and, under this isomorphism, $\Oc_0(q)_q$ is conjugate to the order
\begin{displaymath}
    \begin{pmatrix}
        \Z_q & \Z_q \\
        q \Z_q & \Z_q
    \end{pmatrix}.
\end{displaymath}

Fix an embedding of $B$ into $\M_2(\R)$.
Let $\Gamma_q := \Oc_0(q)^1$ be the group of norm 1 units of $\Oc_0(q)$ as a discrete subgroup of $G = \SL_2(\R)$ and define $Y_q = \Gamma_q \backslash \mathbb{H}$, where $\mathbb{H} \cong \SL_2(\R)/\SO(2)$ is the hyperbolic plane.
Analogously, define $\Gamma_1 := \Oc^1$ and $Y_1 = \Gamma_1 \backslash \mathbb{H}$ using the maximal order.

For example, if $B = M_2(\Q)$, $\Oc = M_2(\Z)$, and $a_q = \diag(q,1)$, then $\Gamma_q$ is the standard Hecke congruence subgroup $\Gamma_0(q)$.
Otherwise, if $B$ is a division algebra, then $Y_1$ and $Y_q$ are compact hyperbolic surfaces (see \cite[Sec. 38.4]{voight}).

Equip $Y_q$ with the $G$-invariant probability measure~$\mu_q$ and let~$\inner{\cdot, \cdot}_q$ be the corresponding inner product on $L^2(Y_q)$ and $\norm{\cdot}$ the corresponding norm.\footnote{We remove the subscript $q$ from norms to avoid confusion with $L_q$-norms. It is clear from context. See Remark \ref{rem:norms} for additional comments.}
We also consider $Y_1 \times Y_1$ as a space with a~$G \times G$-action and equip it with the invariant probability measure $\mu_1 \otimes \mu_1$.
These invariant measures with full support are also often called uniform measures.

\subsection{Hecke correspondences and operators}
Let
\begin{displaymath}
    \iota_q : Y_q \to Y_1 \times Y_1
\end{displaymath}
be the map that sends a matrix~$g \in G$ to the pair $(g, a_q g)$.
It is well-defined, since $a_q \Gamma_q a_q^{-1} \subset \Gamma_1$, and injective, since $\Gamma_q = a_q^{-1} \Gamma_1 a_q \cap \Gamma_1$.

The image of $\iota_q$ is called the $q$-th \emph{Hecke correspondence}.
This name is also used to describe the system of maps
\begin{displaymath}
    \alpha_q, \beta_q : Y_q \to Y_1, \quad \alpha_q(g) = g, \quad \beta_q(g) = a_q \cdot g.
\end{displaymath}
They are, respectively, the composition of $\iota_q$ with the projections $\pi_1, \pi_2$ onto the first and second factor of $Y_1 \times Y_1$.
When pulling back functions through $\alpha_q$ and $\beta_q$, we often simplify notation by writing $\alpha_q^\ast f(z) = f(z)$ and $\beta_q^\ast f(z) = f(a_q z) = l_q f(z)$, by viewing functions on $Y_1$ as periodic functions on $\mathbb{H}$.

By pulling back functions through $\beta_q$ and pushing forward through $\alpha_q$, we obtain the Hecke operator $T_q$, that is,
\begin{equation} \label{eq:hecke-op}
    \sum_{w \in \alpha_q^{-1}(z)} f(\beta_q(w)) = \sum_{\gamma \in \Gamma_1 \backslash \Gamma_1 a_q \Gamma_1} f(\gamma z)
\end{equation}
for functions $f$ defined on $Y_1$.
In this normalization, the Ramanujan conjecture says that $T_q$-eigenvalues of cuspforms are bounded by $2\sqrt{q}$.
It is well-known that the degree of $T_q$ is equal to the index $[\Gamma_1 : \Gamma_q]$ and we write $\tilde{T}_q := [\Gamma_1 : \Gamma_q]^{-1} T_q$.
Finally, define $\theta \in [-1/2, 0]$ to be a bound towards the Ramanujan conjecture, that is, $\tilde{T}_q$ acts on full level cuspidal eigenfunctions with eigenvalues bounded by $q^{\theta}$ in absolute value.
The trivial bound is $\theta = 0$ and the best currently available bound is $\theta = -25/64$, due to Kim--Sarnak (see \eqref{eq:KimSarnak}).
Note also that the eigenvalues of $T_q$ on functions on $Y_1$ are real by self-adjointness.

It is useful to state the previous observation in the following form, which follows immediately after unpacking normalizations.
\begin{lemma} \label{lemma:hecke-corr-op}
    If $f_1$ and $f_2$ are functions on $Y_1$, then
    \begin{displaymath}
        (\iota_q)_\ast \mu_q(f_1 \otimes f_2) = \inner{f_1, l_q\bar{f}_2}_q = \inner{f_1, \tilde{T}_q \bar{f}_2}_1.
    \end{displaymath}
\end{lemma}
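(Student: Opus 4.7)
The proof should follow directly from unpacking the definitions, as the author hints. The plan is to verify the two equalities in sequence, using only the definitions of $\iota_q$, $\alpha_q$, $\beta_q$, $l_q$, and the Hecke operator $T_q$ given in equation \eqref{eq:hecke-op}, together with the behaviour of invariant measures under the covering $\alpha_q : Y_q \to Y_1$.

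For the first equality, I would expand the pushforward integral directly: by definition of $(\iota_q)_\ast$ and of $\iota_q = (\alpha_q, \beta_q)$, one has
\begin{displaymath}
    (\iota_q)_\ast \mu_q(f_1 \otimes f_2) = \int_{Y_q} f_1(\alpha_q(z)) f_2(\beta_q(z)) \, d\mu_q(z) = \int_{Y_q} f_1(z) (l_q f_2)(z) \, d\mu_q(z),
\end{displaymath}
where the last identification uses the convention $\alpha_q^\ast f_1(z) = f_1(z)$ and $\beta_q^\ast f_2(z) = (l_q f_2)(z)$ introduced just above the statement. Since $l_q$ commutes with complex conjugation (it is pullback by $a_q$), the right-hand side equals $\inner{f_1, l_q \bar{f}_2}_q$ by the definition of the inner product $\inner{\cdot,\cdot}_q$.

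For the second equality, the key observation is that $\alpha_q : Y_q \to Y_1$ is a covering of degree $[\Gamma_1 : \Gamma_q]$, so the normalised invariant measures satisfy $(\alpha_q)_\ast \mu_q = \mu_1$, and more generally for any function $h$ on $Y_q$,
\begin{displaymath}
    \int_{Y_q} h \, d\mu_q = [\Gamma_1 : \Gamma_q]^{-1} \int_{Y_1} \Bigl( \sum_{w \in \alpha_q^{-1}(z)} h(w) \Bigr) d\mu_1(z).
\end{displaymath}
I would apply this to $h(z) = f_1(\alpha_q(z)) (l_q f_2)(z)$. Since $f_1 \circ \alpha_q$ is constant on the fibres of $\alpha_q$, it pulls out of the inner sum, leaving $f_1(z) \sum_{w \in \alpha_q^{-1}(z)} f_2(\beta_q(w))$, which is precisely $f_1(z) (T_q f_2)(z)$ by \eqref{eq:hecke-op}. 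Using $\tilde{T}_q = [\Gamma_1 : \Gamma_q]^{-1} T_q$ yields
\begin{displaymath}
    \inner{f_1, l_q \bar{f}_2}_q = \int_{Y_1} f_1(z) (\tilde{T}_q f_2)(z) \, d\mu_1(z) = \inner{f_1, \tilde{T}_q \bar{f}_2}_1,
\end{displaymath}
where the last step again uses that $T_q$ commutes with complex conjugation, as it is defined by an unweighted sum over a double coset.

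There is no real obstacle here: the only subtle points are checking the correct normalisation factor $[\Gamma_1 : \Gamma_q]^{-1}$ in passing from $\mu_q$ to $\mu_1$ via the fibre decomposition, and correctly tracking the complex conjugation so that the output is genuinely an inner product on $Y_1$ rather than just an integral. Both are routine bookkeeping.
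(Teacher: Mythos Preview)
Your proposal is correct and is precisely the ``unpacking of normalizations'' that the paper alludes to; the paper does not write out any further details beyond that phrase, so your argument is exactly the intended one.
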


\begin{rem} \label{rem:norms}
    To avoid any confusions regarding normalizations, we collect here a few observations.
    Let $f: Y_1 \to \C$ and assume that $\inner{f, f}_1 = 1$.
    Then we have $\inner{\alpha_q^\ast f, \alpha_q^\ast f}_q = 1$ and we often write $f$ instead of $\alpha_q^\ast f$ by thinking of it as a function on $\mathbb{H}$.
    Therefore, there is no confusion in simply writing $\norm{f}$.
\end{rem}

\subsection{Involutions} \label{sec:invol}
We also note the existence of the involution $W$ on $Y_1 \times Y_1$ given by switching components, i.e. $W(g, h) = (h, g)$.    
Observe that, on the Hecke correspondence $Y_q$, it induces the Fricke involution $W_q$.
In the case of $a_q = \diag(q,1)$, $\Gamma_q = \Gamma_0(q)$, this is defined as
\begin{displaymath}
    W_q(g) = \begin{pmatrix}
        0 & -1 \\
        q & 0
    \end{pmatrix} g = a_q w g,
\end{displaymath}
where $w$ is the nontrivial Weyl group element.
For general quaternion algebras, we may follow \cite{ogg} and argue locally to essentially reduce to the operator above.

We recall that there is a prime ideal $\qf \subset \Oc_0(q)$ such that $\qf^2 = q \Oc_0(q)$ (see \cite[Sec.~23.3]{voight}).
Eichler's theorem (see \cite[Thm.~28.5.5]{voight}) implies that $\qf = w_q \Oc_0(q) = \Oc_0(q) w_q$ is principal, where $w_q \in \Oc_0(q)$ and $\nr(w_q) = q$.
Thus $w_q$ normalizes $\Gamma_q$ and we define 
\begin{displaymath}
    W_q(g) = w_q g.
\end{displaymath}

To relate $a_q$ and $w_q$, it is useful to work locally.
Fix an isomorphism $\Oc_q \cong \M_2(\Z_q)$.
Since $\nr(a_q) = q$, a version of Smith's normal form implies that there exist $\xi_1, \xi_2 \in \SL_2(\Z_q)$ such that
\begin{displaymath}
    a_q = \xi_1 \begin{pmatrix}
        q & \\ & 1
    \end{pmatrix} \xi_2,
\end{displaymath}
and
\begin{displaymath}
    \xi_2 \Oc_0(q)_q \xi_2^{-1} = \begin{pmatrix}
        \Z_q & \Z_q \\
        q \Z_q & \Z_q
    \end{pmatrix}.
\end{displaymath}
The prime ideal $\qf_q \subset \Oc_0(q)_q$ is then given by
\begin{displaymath}
    \xi_2^{-1} \begin{pmatrix}
        q\Z_q & \Z_q \\
        q \Z_q & q\Z_q
    \end{pmatrix} \xi_2 = \xi_2^{-1} 
    \begin{pmatrix}
        & 1\\ q&
    \end{pmatrix}
    \Oc_0(q)_q \xi_2 = \xi_2^{-1} 
    \Oc_0(q)_q 
        \begin{pmatrix}
        & 1\\ q&
    \end{pmatrix}\xi_2.
\end{displaymath}
Thus, the localization of $w_q$ at $q$ has the form
\begin{displaymath}
    \xi_2^{-1} \begin{pmatrix}
        & 1\\ q &
    \end{pmatrix} \gamma \xi_2 =  \xi_2^{-1} \gamma' \begin{pmatrix}
        & 1\\ q &
    \end{pmatrix}\xi_2,
\end{displaymath}
for some $\gamma, \gamma' \in \Oc_0(q)_q^\times$.
It is now easy to check, by direct computation using the local descriptions above, that $a_q w_q \in q \cdot \Oc^1$ and $w_q = \xi \cdot a_q$ for some $\xi \in \Oc^1$.
This directly implies the identity 
\begin{equation} \label{eq:fricke-invariance}
    \iota_q \circ W_q = W \circ \iota_q.
\end{equation}

\begin{rem} \label{rem:fricke-w-invariance}
The Fricke involution $W_q$ acts on a newform $\phi_q$ by multiplication by $\pm 1$, so it leaves the measure $\mu_{\phi_q}$ invariant.
The relation \eqref{eq:fricke-invariance} implies that $(\iota_q)_\ast \mu_{\phi_q}$ is invariant under the involution $W$ switching components.
\end{rem} 

\section{$L$-functions and triple products}
The main goal of this section is to recall definitions of $L$-functions and state the triple product formula. 
We also gather important automorphic facts and useful computations with Hecke eigenvalues.

\subsection{$L$-functions} \label{sec:l-functions}
We say that a meromorphic function $L(s,\pi)$ with at most poles at $s=0,1$ is an \emph{$L$-function of degree $d$} if we have the following data:
\begin{enumerate}
    \item There is a Dirichlet series with an Euler product of degree $d\geq 1$ 
    \begin{equation}
        \label{eq:Eulerprod}L(s,\pi)=\sum_{n\geq 1}\frac{\lambda_\pi(n)}{n^s}=\prod_p (1-\alpha_\pi(p, 1)p^{-s})^{-1} \cdots (1-\alpha_\pi(p, d)p^{-s})^{-1},
    \end{equation}
    converging absolutely for $\Re s>1$. We refer to $\alpha_\pi(p,i)$ as the \emph{Satake parameters at $p$}.  
    \item A gamma factor
    $$ L_\infty(s,\pi)=\pi^{-ds/2}\prod_{j=1}^d \Gamma\left(\frac{s+s_\pi(j)}{2}\right). $$
    We refer to $s_\pi(j)$ as the \emph{Satake parameters at $\infty$}.
    \item An integer $q(\pi)\geq 1$, called the conductor of $L(s)$ such that $\alpha_\pi(p, i)\neq 0$ for $p\nmid  q(\pi)$ and $1 \leq i \leq d$, and a functional equation
    \begin{equation}
        \Lambda(s,\pi):=q(\pi)^s L_\infty(s,\pi)L(s,\pi)=\epsilon(\pi) \Lambda(1-s,\overline{\pi}), 
    \end{equation}
    where  $\epsilon(\pi)$ is of absolute value 1 and referred to as the \emph{root number} and $\overline{\pi}$ is the \emph{dual of $\pi$} satisfying $\lambda_{\overline{\pi}}(n)=\overline{\lambda_\pi(n)}$. 
\end{enumerate}
 We define the \emph{analytic conductor of $\pi$} as 
\begin{equation}
    \mathbf{c}(\pi):=q(\pi)\prod_{j=1}^d (1+|s_\pi(j)|)
\end{equation}
We say that $L(s,\pi)$ belongs the the \emph{extended Selberg class} if it satisfies the following additional properties
\begin{enumerate}
   \setcounter{enumi}{3} \item There exists a positive $\delta<1/2$ such that $\abs{\alpha_j(p)} \leq p^\delta$ for all primes $p$ and $\abs{\Re s_\pi(j)} \leq \delta$.
\end{enumerate}
It is believed that any $L$-function as defined above satisfies the \emph{Generalized Riemann Hypothesis} \cite[Section 5.7]{IwKo}:
\begin{conj}[GRH]
Let $L(s,\pi)$ be an $L$-function. Then all zeroes in the critical strip $0<\Re s<1$ lies on the critical line $\Re s=1/2$.    
\end{conj}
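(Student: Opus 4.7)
The statement labelled "GRH" is the Generalized Riemann Hypothesis, an open conjecture that the authors invoke as a hypothesis throughout the paper; indeed Theorem~\ref{thm:main-intro} is conditional on GRH precisely because this conjecture is not known to hold. There is therefore no proof to propose in the customary sense. Any honest plan must instead describe the broad programs that have been pursued towards this and related statements, together with the obstruction that has so far defeated each of them.

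The first aspirational program is the Hilbert--P\'olya philosophy: realise the non-trivial zeros of $\Lambda(s,\pi)$ as the spectrum of a self-adjoint operator $H$ on some Hilbert space, so that reality of the spectrum would force the zeros onto the line $\Re s = 1/2$. Statistical evidence from random matrix theory (Montgomery's pair-correlation conjecture, Odlyzko's numerical computations) and Berry--Keating-type quantum-chaotic models strongly supports this picture, and Connes' noncommutative geometry program aims at constructing such an operator through the action of the id\`ele class group on an adelic space. The main obstacle is that, despite decades of effort, no operator with the requisite spectrum has been constructed even for the Riemann zeta function.

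A second program seeks a geometric or cohomological interpretation of the zeros analogous to Deligne's proof of the Weil conjectures for varieties over finite fields. There one has an \'etale cohomology theory on which Frobenius acts, and the positivity input (Rankin's trick) pins the Frobenius eigenvalues to the correct absolute value. In the number-field setting one lacks an analogous cohomology theory on $\operatorname{Spec}\mathbb{Z}$; efforts via Arakelov geometry, $\mathbb{F}_1$-geometry, and more recently Clausen--Scholze condensed/analytic stacks have produced structural progress, but no analogue of the Frobenius--Lefschetz machinery with the required positivity has emerged.

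A third, more analytic, program attempts to push the classical zero-free regions $\Re s > 1 - c/\log(\abs{s}+2)$ of Vinogradov--Korobov type progressively towards the critical line, for instance through sharp moment estimates on $\Re s = 1/2$ combined with the explicit formula. The obstruction here is essentially circular: the moment bounds strong enough to rule out off-line zeros are themselves known to \emph{follow} from GRH (cf.\ Soundararajan's technique used later in the paper) and are inaccessible by current unconditional methods. The hard part, in every approach, is the absence of any mechanism --- operator-theoretic, cohomological, or analytic --- that rigidly enforces the functional-equation symmetry $s \leftrightarrow 1-s$ at the level of individual zeros. For this reason the paper wisely treats GRH as an input, and the substantive work is the conditional deduction recorded in Theorem~\ref{thm:main-intro}.
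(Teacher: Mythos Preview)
Your assessment is entirely correct: the statement is a conjecture (GRH), not a theorem, and the paper offers no proof of it whatsoever --- it is simply stated as a hypothesis and then assumed throughout, most notably in Theorem~\ref{thm:main-intro} and Theorem~\ref{thm:mainestimate}. Your survey of the Hilbert--P\'olya, cohomological, and analytic programs is accurate and informative, though it goes well beyond what the paper itself contains; the paper makes no attempt to discuss approaches to GRH and treats it purely as an input.
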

This is known to imply the \emph{Generalized Lindel\"{o}f Hypothesis}.
\begin{conj}[GLH]
Let $L(s,\pi)$ be an $L$-function. Then for any $\varepsilon>0$, it holds that $L(1/2,\pi)\ll_\varepsilon \mathbf{c}(\pi)^\varepsilon$ where the implied constant is allowed to depend on the degree $d$.    
\end{conj}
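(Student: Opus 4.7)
The plan is to follow the standard two-step reduction from analytic number theory: first express $L(1/2,\pi)$ as a short Dirichlet polynomial via an approximate functional equation, and then try to exhibit cancellation. Using a smooth cutoff together with item (3), one obtains
\begin{equation*}
    L(1/2,\pi) = \sum_{n \geq 1} \frac{\lambda_\pi(n)}{\sqrt{n}} V\!\left(\frac{n}{\sqrt{\mathbf{c}(\pi)}}\right) + \epsilon(\pi) \sum_{n \geq 1} \frac{\overline{\lambda_\pi(n)}}{\sqrt{n}} V\!\left(\frac{n}{\sqrt{\mathbf{c}(\pi)}}\right) + O\!\left(\mathbf{c}(\pi)^{-100}\right),
\end{equation*}
for a rapidly decaying test function $V$. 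The pointwise estimate $\abs{\lambda_\pi(n)} \leq d_d(n)\, n^{1/2+\varepsilon}$, which follows from item (4) and the Dirichlet divisor bound, then yields only the classical convexity bound $L(1/2,\pi) \ll \mathbf{c}(\pi)^{1/4+\varepsilon}$. To prove GLH we would need essentially square-root cancellation in the above sums, that is, a saving of the full factor $\mathbf{c}(\pi)^{1/4}$ over the trivial bound.

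The most promising direct attack is a moment method inside a suitable family $\mathcal{F} \ni \pi$ — twists by Dirichlet characters, the spectral family in the archimedean aspect, or a level family — combined with an amplifier tailored to isolate $\pi$. This is the framework that, after Duke--Friedlander--Iwaniec, has produced every known subconvexity bound. However, even in the best-understood cases (degrees $d = 1$, $d = 2$, and a handful of degree $3$ and $4$ situations), the gain over convexity is only a small explicit power of $\mathbf{c}(\pi)$, and no unconditional method is known that reaches the $\mathbf{c}(\pi)^\varepsilon$ barrier for arbitrary $\pi$. Thus amplification and moments in isolation will not deliver GLH.

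The only known route to the full conjecture is through the zeros of $L(s,\pi)$. Assuming GRH (the preceding conjecture), one applies Hadamard's three-circle / Borel--Carathéodory theorem to $\log L(s,\pi)$ on a thin strip just to the right of $\Re s = 1/2$, where the Euler product already gives tame bounds, and then propagates the estimate to the critical line, as recalled in \cite[Section 5.7]{IwKo}. More generally, a sufficiently strong zero-density theorem for $L(s,\pi)$ near $\Re s = 1$ would suffice, but such estimates are presently unavailable for a general $L$-function in the class of Section~\ref{sec:l-functions}. The main obstacle is therefore foundational rather than technical: without control on the zeros of $L(s,\pi)$ we do not expect to prove this statement, and for that reason GLH appears here as an open conjecture which is used conditionally in the sequel, most notably in the discussion surrounding Theorem~\ref{thm:main-intro} and in Section~\ref{sec:equid-stages}.
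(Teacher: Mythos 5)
This statement is an open conjecture, and the paper offers no proof of it — it only records (citing \cite[Section 5.7]{IwKo}) that GRH implies GLH, exactly the conditional route you identify. Your assessment is therefore correct and matches the paper's treatment: convexity and amplification arguments cannot reach the $\mathbf{c}(\pi)^{\varepsilon}$ bound, and in this paper GLH is only ever invoked as a hypothesis (e.g.\ in Proposition~\ref{prop:que-level-glh} and Section~\ref{sec:equid-stages}) or obtained conditionally from GRH.
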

\subsubsection{Automorphic facts}\label{sec:automorphicstuff}
The principal source of $L$-functions are automorphic forms. If $\pi$ is an automorphic representation of $\GL(d)/\Q$, then by the work of Langlands and Gelbart--Jacquet we have an $L$-function satisfying conditions (1)-(3) and by the work of Luo--Rudnick--Sarnak \todo{Should we add ref's here?}they also satisfy condition (4). In this paper, all $L$-functions are built through instances of Langlands functoriality from degree $2$ $L$-functions. In the sequel, we work with automorphic forms $\phi$ on $B^\times$ or $\GL(2)/\Q$ and, as is common practice, we denote by $L(s, \phi)$ the $L$-function $L(s, \pi)$, where $\pi$ is the $\GL(2)$ automorphic representation generated by $\phi$ or attached to the latter through the Jacquet-Langlands correspondence.
In other $L$-functions, one should similarly take $\phi$ to mean $\pi$. 

We recall the basics of the Jacquet--Langlands correspondence (see \cite[Chap.~10]{gelbart}, for example). Let $B$ be an indefinite quaternion algebra defined over $\Q$ of discriminant $D=\disc(B)$. Let $\pi^B$ be an automorphic representation  of $B^\times$ of level $q$ prime to $D$ with trivial central character such that the local constituent $\pi^B_\infty$ is a principal series representation. Then there exists an associated automorphic representation $\pi$ of $\PGL(2)/\Q$ of conductor $Dq$ such that the local constituent $\pi_{v}$ at a place $v$ is isomorphic to $\pi_{v}^B$ for all $v\nmid D$. We refer to $\pi$ as the \emph{Jacquet--Langlands transfer} of $\pi^B$. This induces a one-to-one correspondence between the non-constant Hecke--Maa{\ss} forms on the compact surface $Y_q$ and the cuspidal Hecke--Maa{\ss} forms on $Y_0(Dq)$ which are new at $D$ (see \cite{Strombergson01} for this classical interpretation).  

Recall that if $\pi$ is an automorphic representation of $\PGL(2)/\Q$, then the Dirichlet series coefficients $\lambda_\pi(n)$ are given by the eigenvalues of the Hecke operators defined above, properly normalized. If $\pi$ has trivial central character, then the Hecke eigenvalues are all real numbers \cite[eq.\ (6.3)]{DFI02}: 
\begin{equation}\label{eq:reallambda}
    \lambda_\phi(n)\in \R,\quad n\geq1,
\end{equation}
by self-adjointness of the Hecke operators. We have the following key bound due to Kim--Sarnak \cite[Appendix 2, Proposition 1]{Kim03} for $\pi$ unramified at~$p$:
\begin{equation}
    \label{eq:KimSarnak} |\alpha_\pi(p,j)|\leq p^{7/64}, 
\end{equation}
which implies that $|\lambda_\pi(p)|\leq 2 p^{7/64}$.
The same is true when $\pi$ is ramified at $p$ (see the classification and definition of local $L$-factors after Theorem 4.7.4 in \cite{bump}, keeping in mind that we only consider $\pi$ with trivial central character).
Similarly, if $\pi$ is unramified at $\infty$, then it holds that
\begin{equation}
    \label{eq:KimSarnak-infty} |\Re s_\pi(j)|\leq 7/64. 
\end{equation}

Recall that by \cite{GelbartJacquet78} and \cite{Kim03}, if $\pi$ is a cuspidal automorphic representation for $\GL(2)/\Q$, then there exists an automorphic representation $\sym^2 \pi$ for $\GL(3)/\Q$ and $\sym^4 \pi$ for $\GL(5)/\Q$, satisfying at unramified primes that
 \begin{equation}\label{eq:sym2sym4}
     \lambda_{\sym^2 \pi}(p)=\lambda_\pi(p^2),\quad\lambda_{\sym^4 \pi}(p)=\lambda_\pi(p^4). 
 \end{equation}
We also note that, if $\pi$ has trivial central character, then the adjoint representation $\ad \pi$ is isomorphic to $\sym^2 \pi$.\footnote{We will shortly state the Watson-Ichino formula. One of the $L$-functions commonly appearing in the main sources is $L(s, \ad \pi)$, for which we substitute $L(s, \sym^2 \pi)$ thanks to our observation above, as is done in other related works such as \cite{HumKhan20} or \cite{bingrong}.}
Finally, we recall the existence of the Rankin-Selberg $L$-function for two representations $\pi, \pi'$ of $\GL(n), \GL(m)$, where $\lambda_{\pi \otimes \pi'}(p) = \lambda_\pi(p) \cdot \lambda_{\pi'}(p)$.
The Satake parameters of a Rankin-Selberg convolution $\pi \otimes \pi'$ are given by the multiset of all the products between a Satake parameter of $\pi$ and a Satake parameter of $\pi'$. 
The Satake parameters of $\sym^k \pi$ are a subset of those of the convolution of $\pi$ with itself $k$ times.
 
We will need some analytic information about various $L$-functions.
 \begin{lemma}\label{lem:sym2sym4}
 Let $\pi$ be an automorphic representation for $\PGL(2)/\Q$ of square-free conductor. Then the $L$-functions $L(s,\sym^2 \pi)$ and $L(s,\sym^4 \pi)$ are holomorphic and belong to the extended Selberg class.
 \end{lemma}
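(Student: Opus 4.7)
Conditions (1)--(3) of the extended Selberg class follow from the automorphy of the symmetric power transfers: by Gelbart--Jacquet \cite{GelbartJacquet78}, $\sym^2\pi$ is an isobaric automorphic representation of $\GL(3)/\Q$, and by Kim \cite{Kim03}, $\sym^4\pi$ is an isobaric automorphic representation of $\GL(5)/\Q$. Each transfer carries a Dirichlet series with Euler product, an archimedean factor, and a functional equation of the desired shape. Condition (4) reduces to the Kim--Sarnak bound: the Satake parameters of $\sym^k\pi$ at an unramified prime $p$ are products of $k$ parameters of $\pi$, hence bounded by $p^{7k/64}\leq p^{7/16}<p^{1/2}$ for $k\in\{2,4\}$ using \eqref{eq:KimSarnak}; the archimedean parameters are controlled identically by \eqref{eq:KimSarnak-infty}.

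For holomorphicity, the idea is to exploit the isobaric identities, valid by strong multiplicity one as they hold at almost every Satake parameter in view of the trivial central character of $\pi$,
\[ \pi\otimes\pi = \sym^2\pi\boxplus\triv,\qquad \sym^2\pi\otimes\sym^2\pi = \sym^4\pi\boxplus\sym^2\pi\boxplus\triv. \]
They translate into the $L$-function factorizations
\[ L(s,\pi\otimes\pi) = L(s,\sym^2\pi)\zeta(s),\qquad L(s,\sym^2\pi\otimes\sym^2\pi) = L(s,\sym^4\pi)L(s,\sym^2\pi)\zeta(s). \]
Assuming $\sym^2\pi$ is cuspidal on $\GL(3)$, the left-hand sides are Rankin--Selberg $L$-functions of self-dual cuspidals, each with a simple pole at $s=1$ (Jacquet--Shalika) and otherwise holomorphic. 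Comparing pole orders in the first identity gives that $L(s,\sym^2\pi)$ is holomorphic at $s=1$, and substituting into the second identity yields the same for $L(s,\sym^4\pi)$. Since $L$-functions of isobaric representations of $\GL(n)$ can only have poles at $s\in\{0,1\}$, both $L(s,\sym^2\pi)$ and $L(s,\sym^4\pi)$ are entire.

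It thus remains to show that $\sym^2\pi$ is cuspidal, equivalently by Gelbart--Jacquet that $\pi$ is not dihedral. This is where the square-free conductor hypothesis is essential. At each prime $p$ dividing the conductor, standard newform theory identifies $\pi_p$ as an unramified twist of the Steinberg representation, whose Weil--Deligne parameter carries a non-trivial monodromy operator. For a cuspidal dihedral $\pi=\AI_K^\Q(\chi)$ with trivial central character, however, a case analysis over the splitting behaviour of $p$ in $K$ shows that $\pi_p$ is either unramified, a unitary principal series induced from distinct characters, or a supercuspidal --- never Steinberg. The only potentially degenerate case, the split-prime principal series $\pi(\chi_1,\chi_1^{-1})$ with $\chi_1^2=\abs{\cdot}$, is incompatible with the unitarity of the inducing characters forced by the trivial central character. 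Establishing this local dichotomy is the main obstacle in the argument.
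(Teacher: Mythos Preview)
Your proof is correct and follows essentially the same architecture as the paper: automorphy of the transfers for the Selberg-class data, the Kim--Sarnak bound for condition (4), cuspidality of $\sym^2\pi$ to access the simple pole of the Rankin--Selberg $L$-function, and the factorization $L(s,\sym^2\pi\otimes\sym^2\pi)=L(s,\sym^4\pi)L(s,\sym^2\pi)\zeta(s)$ to conclude holomorphicity of $L(s,\sym^4\pi)$.

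The one substantive difference is in the non-dihedrality step. The paper argues directly on the global invariants: if $\pi=\AI_K^\Q(\chi)$, then either $\chi$ is ramified at some finite prime, forcing a square-full factor in the conductor of $\pi$, or $\chi$ has conductor one, in which case the central character of $\pi$ is the nontrivial quadratic character $\eta_K$. Either way one contradicts the hypotheses. Your route instead analyzes the local representation types: square-free conductor forces $\pi_p$ to be a twist of Steinberg at each ramified prime, whereas automorphic induction from a character has semisimple Weil--Deligne parameter and hence no monodromy. Both arguments are valid; the paper's is shorter and avoids the case analysis, while yours gives a slightly more structural explanation of why Steinberg components cannot arise dihedrally. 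One small point: your argument is vacuous when the conductor equals $1$, but this is harmless since any dihedral $\pi$ has conductor divisible by $|\disc K|>1$; you may want to say this explicitly. Your first factorization $L(s,\pi\otimes\pi)=L(s,\sym^2\pi)\zeta(s)$ is redundant once $\sym^2\pi$ is known to be cuspidal (its standard $L$-function is then automatically entire), but it does no harm.
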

 \begin{proof}
   From \cite{GelbartJacquet78} we know that $\sym^2 \pi$ is cuspidal exactly if $\pi$ is not \emph{monomial} (meaning that $\pi\cong \pi\otimes \eta$ for some Dirichlet character $\eta$). Since $\pi$ has square-free conductor and trivial central character, it is a standard fact that $\pi$ is not monomial, see e.g.\ the last sentence in \cite[Cor.\ 4.1.3]{Ramakrishnan2000}. For completeness we sketch an argument: by \cite{LabesseLanglands79} being monomial means exactly that $\pi$ is the automorphic induction $\pi(\chi)$ of a Hecke character $\chi$ of a quadratic field $K/\Q$.   But if $\chi$ is ramified at a finite prime of $K$ then the conductor of $\pi(\chi)$ at the corresponding rational prime is square-full and if  $\chi$ has conductor one then $\pi(\chi)$ has central character equal to the quadratic character associated to $K$ via class field theory. 
   Thus we conclude that $\sym^2\pi$ is indeed cuspidal.
   
   Furthermore, since $\sym^2\pi$ is self-dual, it follows by Rankin--Selberg theory (see \cite[Thm.~11.7.1]{getz-hahn}) that $L(s,\sym^2 \pi \otimes \sym^2 \pi)$ has a unique simple pole at $s=1$. 
   Using the Hecke relations (see Section \ref{sec:hecke-rel} below), it follows that at a prime $p$ the Dirichlet coefficient of the Rankin--Selberg $L$-function equals 
   \begin{align*}
       \lambda_{\sym^2 \pi}(p)^2&=\lambda_\pi(p^2)^2=\lambda_\pi(p^4)+\lambda_\pi(p^2)+1\\
       &=\lambda_{\sym^4 \pi}(p)+\lambda_{\sym^2 \pi}(p)+1. 
   \end{align*}
From strong multiplicity one and the above we conclude the factorization 
\begin{equation}
    L(s,\sym^2 \pi \otimes \sym^2 \pi)= L(s,\sym^4 \pi)L(s,\sym^2 \pi)\zeta(s),
\end{equation}
see e.g.\ \cite[third eq.,
p.\ 215]{RamaWang03}. Since $\zeta(s)$ has a simple pole at $s=1$ we conclude that indeed $L(s,\sym^4 \pi)$ is holomorphic. That the $L$-functions belong to the extended Selberg class follows from Rankin--Selberg theory \cite[Thm.~11.7.1]{getz-hahn} and the bounds towards Ramanujan (\ref{eq:KimSarnak}) and (\ref{eq:KimSarnak-infty}). 
 \end{proof}
\begin{rem}
    In fact, a more careful look at the local Langlands correspondence shows that, if $\pi$ has trivial central character and square-free conductor, then $\sym^2 \pi$ and $\sym^4 \pi$ are cuspidal.
    This follows from the argument that $\pi$ is not monomial (i.e. not of CM-type) and the observation in the paragraph before Corollary 2.9 in \cite{blomer-et-al}.
\end{rem}
 
\begin{lemma}\label{lem:automorphyRankinSelberg}
 Let $\pi_1,\pi_2,\pi_3$ be pairwise distinct automorphic representations for $\PGL(2)/\Q$ with trivial central characters and of square-free conductors. Then $L(s,\pi_1\otimes \pi_2)$,  $L(s,\sym^2 \pi_1\otimes \pi_2\otimes \pi_3)$, $L(s,\sym^2 \pi_1\otimes \sym^2 \pi_2)$ are holomorphic and belong to the extended Selberg class. 
\end{lemma}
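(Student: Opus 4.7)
The approach is to realize each $L$-function as a Rankin--Selberg convolution of cuspidal automorphic representations and then apply the Jacquet--Piatetski-Shapiro--Shalika theory of poles (see \cite[Thm.~11.7.1]{getz-hahn}) together with the Kim--Sarnak bound to verify the extended Selberg class axioms. The crucial algebraic input is a twist-rigidity lemma that I would prove first: under the hypotheses of the lemma, $\pi_i \not\cong \pi_j \otimes \chi$ for any Hecke character $\chi$ of $\Q$ whenever $i \neq j$. Indeed, comparing central characters forces $\chi^2 = \mathbf{1}$. If $\chi$ were a nontrivial quadratic character ramified at some prime $p$, then a short local new-vector computation shows that the conductor exponent of $\pi_j \otimes \chi$ at $p$ must be at least $2$: either $\pi_j$ is unramified at $p$ and the twist has exponent $2 v_p(N_\chi) \geq 2$, or $\pi_j$ has exponent $1$ (so $\pi_{j,p}$ is an unramified twist of Steinberg) and the twist by a ramified quadratic character becomes an irreducible principal series induced from two conductor-$p$ characters, again with exponent $2$. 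Either outcome contradicts squarefreeness of the conductor of $\pi_i = \pi_j \otimes \chi$; hence $\chi$ is everywhere unramified, so $\chi = \mathbf{1}$ by the class number one property of $\Q$, contradicting distinctness.

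With twist-rigidity established, the pole analysis proceeds case by case. For $L(s, \pi_1 \otimes \pi_2)$ on $\GL(2) \times \GL(2)$, a pole at $s=1$ would force $\pi_1 \cong \widetilde{\pi_2} \cong \pi_2$ (using triviality of the central characters), which is excluded. For $L(s, \sym^2 \pi_1 \otimes \sym^2 \pi_2)$ on $\GL(3) \times \GL(3)$---cuspidal by Lemma~\ref{lem:sym2sym4}---a pole would require $\sym^2 \pi_1 \cong \sym^2 \pi_2$; Ramakrishnan's descent theorem then forces $\pi_1 \cong \pi_2 \otimes \chi$ for some quadratic $\chi$, ruled out by twist-rigidity. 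For $L(s, \sym^2 \pi_1 \otimes \pi_2 \otimes \pi_3)$, Ramakrishnan's automorphic tensor product lift produces an isobaric $\GL(4)$-representation $\pi_2 \boxtimes \pi_3$, which is cuspidal precisely by twist-rigidity applied to $\pi_2, \pi_3$; the $L$-function then equals the Rankin--Selberg convolution of a cuspidal representation on $\GL(3)$ with a cuspidal on $\GL(4)$, and no pole at $s=1$ is possible since the degrees differ.

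For the remaining extended Selberg class conditions, the Euler product, gamma factor, and functional equation are standard outputs of Jacquet--Piatetski-Shapiro--Shalika / Shahidi theory for these Rankin--Selberg products, so it only remains to verify axiom (4). For the non-archimedean bound, Satake parameters of $\sym^2 \pi_i$ are bounded by $p^{14/64}$ via \eqref{eq:KimSarnak}, so multiplying through the tensor structure of each $L$-function gives parameters of size at most $p^{28/64} = p^{7/16}$, comfortably below $p^{1/2}$. The archimedean bound on $\abs{\Re s_\pi(j)}$ follows analogously from \eqref{eq:KimSarnak-infty}. I expect the twist-rigidity step to be the main obstacle, since it requires careful local conductor bookkeeping to exploit the squarefreeness hypothesis; the remaining arguments are essentially mechanical once this input is in place.
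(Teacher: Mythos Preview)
Your proposal is correct and follows essentially the same route as the paper: twist-rigidity from the squarefree conductor hypothesis, Ramakrishnan's functoriality (Theorem~M for the $\GL(4)$ lift and the multiplicity-one descent for $\sym^2$), the Rankin--Selberg pole criterion, and the Kim--Sarnak bound to verify axiom~(4). The only cosmetic differences are that you spell out the twist-rigidity argument in full (the paper simply cites \cite[Cor.~4.1.3]{Ramakrishnan2000}) and treat $L(s,\pi_1\otimes\pi_2)$ directly as a $\GL(2)\times\GL(2)$ Rankin--Selberg convolution rather than via cuspidality of the $\GL(4)$ lift.
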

\begin{proof}
It follows from \cite[Theorem M]{Ramakrishnan2000} that there exists a \emph{cuspidal} automorphic representation $\pi_1\otimes \pi_2$ of $\GL(4)/\Q$ such that $\lambda_{\pi_1\otimes \pi_2}(p)=\lambda_{\pi_1}(p)\lambda_{\pi_2}(p)$ for primes $p$ not dividing the conductors of $\pi_1,\pi_2$.
Here we are using that $\pi_1$ is not a twist of $\pi_2$, since the conductors are assumed square-free (see the cuspidality criterion in \emph{loc.\ cit.}). This yields the claim for the first $L$-function by general facts about automorphic $L$-functions. 

Next, our assumptions imply that $\sym^2 \pi_1 \not\cong  \sym^2 \pi_2$ (see \cite[Cor.~4.1.3]{Ramakrishnan2000}).
The claim for the last two $L$-function now follows from Rankin--Selberg theory (see \cite[Thm.~11.7.1]{getz-hahn}), as well as the Kim--Sarnak bound (\ref{eq:KimSarnak}) to obtain the bound for the Satake parameters $|\alpha_{\pi}(p,j)|\leq (p^{7/64})^4=p^{7/16}$ for $\pi\in \{\pi_1\otimes \pi_2, \sym^2 \pi_1\otimes \pi_2\otimes \pi_3, \sym^2 \pi_1\otimes\sym^2 \pi_2\}$ and similarly at the archimedean place.  
\end{proof}
Finally, we record the that if $\pi$ is cuspidal then
\begin{equation}\label{eq:positivesym2}
 L(1,\sym^2 \pi)>0,  
\end{equation} 
using the Rankin--Selberg method \cite[(5.101)]{IwKo}.

\subsubsection{Hecke relations} \label{sec:hecke-rel}
Let $\lambda(p^k)$ be the eigenvalue of an automorphic form on $Y_q$ with respect to the unramified Hecke operators $T_{p^k}$.
It is well-known (see \cite[(8.39)]{iwaniec-spectral}) that these satisfy the relations
\begin{equation} \label{eq:Heckerelation}
    \lambda(p^n) \lambda(p^m) = \sum_{j = 0}^{\min(m,n)} \lambda(p^{m+n-2j}).
\end{equation}
For instance, we often use that $\lambda(p)^2 = \lambda(p^2) + 1$ or that
\begin{equation} \label{eq:hecke-rel-third-pow}
    \lambda(p)^3 = \lambda(p^3) + 2 \lambda(p).
\end{equation}
Applying the formula iteratively, for every $d\geq 1$, we may find coefficients $\alpha^{(d)}_{j,n}\in \Z$ that satisfy
\begin{equation} \label{eq:powers-of-lambda}
    \lambda(p^d)^n=\sum_{j=0}^{dn}\alpha^{(d)}_{j,n}\lambda(p^j).
\end{equation}

Recall also that the Hecke eigenvalue is given in terms of the two Satake parameters $\alpha(p,1), \alpha(p,2)$ through
\begin{displaymath}
    \lambda(p) = \alpha(p,1) + \alpha(p,2).
\end{displaymath}
Thus, the $L$-factor at $p$ is
\begin{equation} \label{eq:l-factor-at-p}
    (1 - \alpha(p,1) p^{-s})^{-1} \cdot (1 - \alpha(p,2) p^{-s})^{-1} = (1 - \lambda(p) p^{-s} + p^{-2s})^{-1},
\end{equation}
since $\alpha(p,1) \alpha(p,2) = 1$.

Another standard use of the Hecke relations that can be found in our work is to compute power sums like $\alpha(p,1)^k + \alpha(p,2)^k$.
For this, we may use Newton's identities and the facts above to express the latter as a polynomial in $\lambda(p)$. 
If necessary, this polynomial can be expressed as a linear combination of $(\lambda(p^j))_j$ by using \eqref{eq:Heckerelation} iteratively to compute each power of $\lambda(p)$.
For instance,
\begin{displaymath}
    \alpha(p,1)^2 + \alpha(p,2)^2 = \lambda(p)^2 - 2 = \lambda(p^2) - 1.
\end{displaymath}

To study the combinatorics of the Hecke relations further, it is useful to consider the following generating functions.
The standard Chebyshev polynomials of the second kind are defined by
\begin{displaymath}
    (1 - 2tX + X^2)^{-1} = \sum_{n \geq 0} U_n(t) X^n.
\end{displaymath}
It is well-know and easy to see that $U_0(t) = 1$ and $U_1(t) = 2t$.
Due to the relations \eqref{eq:Heckerelation} and \eqref{eq:l-factor-at-p}, we may substitute $t \mapsto \lambda(p)/2$ and this gives $\lambda(p^n) = U_n(\lambda(p)/2)$.
The $U_n$ thus also satisfy the analogue of~\eqref{eq:powers-of-lambda}, meaning that
\begin{displaymath}
    U_d(t)^n=\sum_{j=0}^{dn}\alpha^{(d)}_{j,n}U_j(t).
\end{displaymath}

\begin{lemma}\label{lem:Heckerelations}
For the coefficients given in \eqref{eq:powers-of-lambda}, it holds that
\begin{enumerate}
\item  \label{item:hecke2}$\alpha^{(d)}_{j,n}\geq 0$,
\item \label{item:hecke4} $\sum_{j=0}^{dn} \alpha^{(d)}_{j,n}\leq (d+1)^{n-1}$,
\item \label{item:hecke1} $\alpha^{(d)}_{0,1}=0$,
\item \label{item:hecke3} $\alpha^{(1)}_{0,n}=\delta_{2|n}\frac{n!}{(\frac{n}{2})!(\frac{n}{2}+1)!}$.
\end{enumerate}
\end{lemma}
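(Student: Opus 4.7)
The plan is to handle items (\ref{item:hecke1}), (\ref{item:hecke2}), and (\ref{item:hecke4}) together by a short induction on $n$ using the Hecke relation \eqref{eq:Heckerelation}, and to handle item (\ref{item:hecke3}) separately by a coefficient comparison in Satake-type variables.

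For the first group, the base case $n=1$ of the defining expansion forces $\alpha^{(d)}_{j,1} = \delta_{j,d}$, which immediately yields (\ref{item:hecke1}) for $d\geq 1$ as well as the conclusions of (\ref{item:hecke2}) and (\ref{item:hecke4}). For the inductive step, I would multiply the expansion of $\lambda(p^d)^n$ by one more $\lambda(p^d)$ and apply \eqref{eq:Heckerelation} in the form $\lambda(p^d)\lambda(p^j) = \sum_{i=0}^{\min(j,d)}\lambda(p^{j+d-2i})$. This rewrites each new coefficient $\alpha^{(d)}_{j',n+1}$ as a nonnegative integer combination of the previous $\alpha^{(d)}_{j,n}$, giving (\ref{item:hecke2}); and since each old coefficient is multiplied by at most $\min(j,d)+1\leq d+1$ weight-one terms, the total mass multiplies by at most $d+1$ per step, yielding the bound in (\ref{item:hecke4}).

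For item (\ref{item:hecke3}), I would introduce a formal variable $z$ with inverse $z^{-1}$ (playing the role of Satake parameters at $p$ whose product is $1$), so that
\begin{displaymath}
    \lambda(p) = z + z^{-1}, \qquad \lambda(p^j) = z^j + z^{j-2} + \cdots + z^{-j}.
\end{displaymath}
Substituting into the defining expansion of $\lambda(p)^n$ and comparing coefficients of $z^r$ (which is only possibly nonzero when $r\equiv n\pmod 2$) yields
\begin{displaymath}
    \binom{n}{(n-r)/2} = \sum_{\substack{j\geq |r|\\ j\equiv r \pmod 2}} \alpha^{(1)}_{j,n}.
\end{displaymath}
Taking the difference between the identities at $r$ and $r+2$ telescopes to $\alpha^{(1)}_{r,n} = \binom{n}{(n-r)/2} - \binom{n}{(n-r)/2-1}$. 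Setting $r=0$ gives $0$ when $n$ is odd, and when $n=2k$ gives $\binom{2k}{k} - \binom{2k}{k-1} = \tfrac{1}{k+1}\binom{2k}{k} = \tfrac{n!}{(n/2)!(n/2+1)!}$, the Catalan number $C_{n/2}$, which is exactly the claim.

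No step of this plan presents a serious obstacle: the induction is immediate from the Hecke relation, and item (\ref{item:hecke3}) reduces to the standard Catalan identity $C_k = \binom{2k}{k} - \binom{2k}{k-1}$ after a single coefficient comparison in the Satake variables.
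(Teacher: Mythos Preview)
Your argument is correct. For items (\ref{item:hecke2}), (\ref{item:hecke4}), and (\ref{item:hecke1}) you follow essentially the same induction as the paper: the recursion coming from $\lambda(p^d)\lambda(p^j)=\sum_{i=0}^{\min(j,d)}\lambda(p^{j+d-2i})$ shows nonnegativity and that the total mass grows by at most a factor $d+1$ per step, while the base case $\alpha^{(d)}_{j,1}=\delta_{j,d}$ gives (\ref{item:hecke1}).

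Your treatment of (\ref{item:hecke3}) is genuinely different from the paper's. The paper identifies $\lambda(p^j)$ with the Chebyshev polynomial $U_j$, uses the orthonormality of $(U_j)$ on $[-1,1]$ with respect to $\tfrac{2}{\pi}\sqrt{1-x^2}\,dx$ to extract $\alpha^{(1)}_{0,n}$ as an integral, and then evaluates that integral via Wallis' formulae. Your approach is purely combinatorial: writing $\lambda(p^j)=z^j+z^{j-2}+\cdots+z^{-j}$ and comparing coefficients of $z^r$ in $(z+z^{-1})^n$ gives $\sum_{j\ge |r|,\ j\equiv r\,(2)}\alpha^{(1)}_{j,n}=\binom{n}{(n-r)/2}$, and differencing at $r$ and $r+2$ isolates $\alpha^{(1)}_{r,n}$ as the ballot-number expression $\binom{n}{(n-r)/2}-\binom{n}{(n-r)/2-1}$, which at $r=0$ is the Catalan number $\tfrac{n!}{(n/2)!\,(n/2+1)!}$. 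This route is shorter, avoids any analysis, and in fact yields the stronger statement giving all $\alpha^{(1)}_{r,n}$, not just the constant term; the paper's approach, on the other hand, makes the connection with Chebyshev orthogonality explicit, which fits naturally with the $U_n$-formalism already introduced there.
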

\begin{proof}
    The Hecke relations (\ref{eq:Heckerelation}) imply that 
    $$\alpha^{(d)}_{j,n+1}=\sum_{i=\max(0,\lceil \frac{d-j}{2} \rceil)}^d \alpha^{(d)}_{j-d+2i,n}.$$
    In particular, we conclude that $\alpha^{(d)}_{j,n}\geq 0$ and
    $$ \sum_{j=0}^{d(n+1)} \alpha^{(d)}_{j,n+1}\leq (d+1)  \sum_{j=0}^{dn} \alpha^{(d)}_{j,n}\leq \ldots \leq (d+1)^n \alpha^{(d)}_{d,1} = (d+1)^n,$$
    yielding the second property.
    
    The third property is obvious. 
    The last one can be deduced, for example, using the orthogonality of Chebyshev polynomials.
    More precisely, recall that $(U_n(t))_n$ is an orthonormal system in $L^2([-1,1])$ with respect to the measure given by $\frac{2}{\pi} \sqrt{1 - x^2} \, dx$. 
    Therefore,
    \begin{displaymath}
        \alpha_{0,n}^{(1)} = \frac{2}{\pi} \int_{-1}^1 U_1(x)^n \sqrt{1 - x^2} \, dx.
    \end{displaymath}
    Recalling that $U_1(x) = 2x$ and substituting $x \mapsto \sin(x)$, we obtain
    \begin{displaymath}
        \alpha_{0,n}^{(1)} = 2^n \cdot \frac{2}{\pi} \int_{-\pi/2}^{\pi/2} \sin(x)^n \cos(x)^2 \, dx.
    \end{displaymath}
    Since $\sin(x)$ is odd, the integral vanishes if $2 \nmid n$.
    Next, we recall the well-known fact (the following is one of Wallis' integrals) that
    \begin{displaymath}
        \int_{-\pi/2}^{\pi/2} \sin(x)^{2k} \, dx = 2 \cdot \frac{\pi}{2} \cdot \frac{1}{2^{2k}} \cdot \frac{(2k)!}{(k!)^2}.
    \end{displaymath}
    Writing $\cos(x)^2 = 1 - \sin(x)^2$ and applying these formulae, we obtain that
    \begin{displaymath}
        \alpha_{0,2k}^{(1)} = 2 \left( \frac{(2k)!}{(k!)^2} - \frac14 \cdot \frac{(2k+2)!}{(k+1)!^2} \right).
    \end{displaymath}
    The claim follows by a simple calculation.
\end{proof}

We record here a consequence of the Hecke relations that plays an important technical role in this paper.
\begin{lemma}\label{lem:variance-coeffs}
    For $\pi, \sigma_1, \sigma_2$ automorphic representations of $\PGL(2)/\Q$, unramified at a prime $p$, we have
    \begin{align*}
        (\lambda_\pi(p)^2 + \lambda_{\sigma_1}(p) \lambda_{\sigma_2}(p))^2 &= 3 + \lambda_{\sym^4 \pi}(p) + 3 \lambda_{\sym^2 \pi}(p) +  \lambda_{\sym^2 \sigma_1 \otimes \sym^2 \sigma_2}(p) \\ 
        &+ \lambda_{\sym^2 \sigma_1}(p) + \lambda_{\sym^2 \sigma_2}(p) + 2 \lambda_{\sym^2 \pi \otimes \sigma_1 \otimes \sigma_2}(p) + 2 \lambda_{\sigma_1 \otimes \sigma_2}(p),
    \end{align*}
    if $\sigma_1 \neq \sigma_2$ and
    \begin{align*}
        (\lambda_\pi(p)^2 + \lambda_{\sigma}(p)^2)^2 &= 6 + \lambda_{\sym^4 \pi}(p) + 5 \lambda_{\sym^2 \pi}(p) + 2 \lambda_{\sym^2 \pi \otimes \sym^2 \sigma}(p) \\ 
        &+ \lambda_{\sym^4 \sigma}(p) + 5 \lambda_{\sym^2 \sigma}(p),
    \end{align*}
    if $\sigma_1 = \sigma_2 = \sigma$.
\end{lemma}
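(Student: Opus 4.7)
The plan is to expand the square $(\lambda_\pi(p)^2 + \lambda_{\sigma_1}(p)\lambda_{\sigma_2}(p))^2$ into three monomials and reduce each one to a linear combination of Hecke eigenvalues of the specified lifts. The only tools required are the Hecke relation \eqref{eq:Heckerelation}, the identities \eqref{eq:sym2sym4} defining the symmetric power eigenvalues, the multiplicativity $\lambda_{\pi_1 \otimes \pi_2}(p) = \lambda_{\pi_1}(p) \lambda_{\pi_2}(p)$ of Rankin--Selberg eigenvalues at unramified primes, and the factorization of $L(s, \sym^2 \pi \otimes \sym^2 \pi)$ established in the proof of Lemma~\ref{lem:sym2sym4}. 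The computation is essentially bookkeeping, and no step presents a genuine obstacle.

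First I would record the basic identity $\lambda_\phi(p)^2 = \lambda_\phi(p^2) + 1 = \lambda_{\sym^2 \phi}(p) + 1$, valid for any cuspidal automorphic representation $\phi$ of $\PGL(2)/\Q$ unramified at $p$; this follows from \eqref{eq:Heckerelation} with $m = n = 1$ together with \eqref{eq:sym2sym4}. Applying it to $\phi = \pi$ and squaring yields $\lambda_\pi(p)^4 = \lambda_{\sym^2 \pi}(p)^2 + 2 \lambda_{\sym^2 \pi}(p) + 1$. By multiplicativity, $\lambda_{\sym^2 \pi}(p)^2 = \lambda_{\sym^2 \pi \otimes \sym^2 \pi}(p)$, and the factorization from the proof of Lemma~\ref{lem:sym2sym4} gives $\lambda_{\sym^2 \pi \otimes \sym^2 \pi}(p) = \lambda_{\sym^4 \pi}(p) + \lambda_{\sym^2 \pi}(p) + 1$. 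Combining these,
\begin{displaymath}
    \lambda_\pi(p)^4 = \lambda_{\sym^4 \pi}(p) + 3 \lambda_{\sym^2 \pi}(p) + 2.
\end{displaymath}

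Next I would treat the cross term using multiplicativity:
\begin{displaymath}
    \lambda_\pi(p)^2 \lambda_{\sigma_1}(p) \lambda_{\sigma_2}(p) = (\lambda_{\sym^2 \pi}(p) + 1) \lambda_{\sigma_1}(p) \lambda_{\sigma_2}(p) = \lambda_{\sym^2 \pi \otimes \sigma_1 \otimes \sigma_2}(p) + \lambda_{\sigma_1 \otimes \sigma_2}(p),
\end{displaymath}
so the doubled cross term contributes $2 \lambda_{\sym^2 \pi \otimes \sigma_1 \otimes \sigma_2}(p) + 2 \lambda_{\sigma_1 \otimes \sigma_2}(p)$. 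For the third monomial $\lambda_{\sigma_1}(p)^2 \lambda_{\sigma_2}(p)^2$ in the case $\sigma_1 \neq \sigma_2$, I would expand $(\lambda_{\sym^2 \sigma_1}(p) + 1)(\lambda_{\sym^2 \sigma_2}(p) + 1)$ and use $\lambda_{\sym^2 \sigma_1}(p) \lambda_{\sym^2 \sigma_2}(p) = \lambda_{\sym^2 \sigma_1 \otimes \sym^2 \sigma_2}(p)$ to obtain $\lambda_{\sym^2 \sigma_1 \otimes \sym^2 \sigma_2}(p) + \lambda_{\sym^2 \sigma_1}(p) + \lambda_{\sym^2 \sigma_2}(p) + 1$. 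Summing the three contributions yields the first claimed identity.

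In the coincident case $\sigma_1 = \sigma_2 = \sigma$, the term $\lambda_\sigma(p)^4$ is computed exactly as for $\lambda_\pi(p)^4$ above, giving $\lambda_{\sym^4 \sigma}(p) + 3 \lambda_{\sym^2 \sigma}(p) + 2$. The doubled cross term becomes $2 \lambda_\pi(p)^2 \lambda_\sigma(p)^2 = 2 (\lambda_{\sym^2 \pi}(p) + 1)(\lambda_{\sym^2 \sigma}(p) + 1)$, which by multiplicativity equals $2 \lambda_{\sym^2 \pi \otimes \sym^2 \sigma}(p) + 2 \lambda_{\sym^2 \pi}(p) + 2 \lambda_{\sym^2 \sigma}(p) + 2$. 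Adding these pieces to the expression for $\lambda_\pi(p)^4$ and collecting terms yields the second claimed formula.
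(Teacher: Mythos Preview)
Your proof is correct and follows essentially the same approach as the paper: both expand using $\lambda_\phi(p)^2 = \lambda_\phi(p^2) + 1$ and the Hecke relation $\lambda_\phi(p^2)^2 = \lambda_\phi(p^4) + \lambda_\phi(p^2) + 1$, with the only cosmetic difference being that the paper substitutes first to obtain a trinomial and then squares, whereas you square the binomial first and handle the three resulting monomials separately. Your version is more explicit but the content is identical.
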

\begin{proof}
    Note that $\lambda_\pi(p)^2 = \lambda_\pi(p^2) + 1$ and insert this into the left-hand side.
    Squaring a trinomial, we are done by applying the relation $\lambda_\pi(p^2)^2 = \lambda_\pi(p^4) + \lambda_\pi(p^2) + 1$.
\end{proof}

\subsubsection{Local representations and triple product $L$-functions}Let $\pi_1^B, \pi_2^B, \pi_3^B$ be three cuspidal automorphic representations of $B^\times$ with trivial central character and Jacquet--Langlands transfers $\pi_1,\pi_2,\pi_3$. The $L$-functions most central to our equidistribution problem are the triple product $L$-function $L(s, \pi_1 \otimes \pi_2 \otimes \pi_3)$.
For our purposes we will need  more precise information about the Satake parameters, which we will now recall. 
The triple product $L$-function can be defined in a more abstract way using the local Langlands correspondence (see Chapter 3 of Watson's thesis \cite{watson}, for example), but we write them down explicitly in the cases relevant to us.

We assume from now on that $\pi_{i, \infty}$ is a principal series representation $\mathcal{P}(s_i, \delta_i)$ of parameter $s_i \in \C$ and sign given by $\delta_i \in \{0,1\}$, that is, the representation $\sgn^{\delta_i} \abs{\cdot}^{s_i} \boxtimes \sgn^{\delta_i} \abs{\cdot}^{-s_i}$.
It contains a spherical vector on which the Casimir operator acts with eigenvalue $1/4 - s_i^2$.
Tempered representations have $s_i = it_i$ with $t_i \in \R$.
More generally, we have $s_i \in i\R \cup [-7/64, 7/64]$, by the Kim--Sarnak bound \eqref{eq:KimSarnak-infty}.
If $\pi$ is attached to an automorphic form $\phi$, then we denote the parameter of $\pi_\infty$ by $s_\phi$.

Furthermore, assume that $\pi_{i,p}$ is unramified for all primes $p$ except at $p \mid \disc(B)$ and, potentially, at $p = q$.
Thus, at unramified primes $p$, there is an unramified character $\chi_i:\Q_p \to \C$ such that $\pi_{i,p}$ is isomorphic to $\chi_i \boxtimes \chi_i^{-1}$.
The Hecke operator $T_p$ acts on the spherical vector with eigenvalue $\chi_i(p) + \chi^{-1}_i(p)$.
We denote $\alpha_{i,p} = \chi_i(p)$.

At the prime $q$, either $\pi_{i,q}$ is unramified as above and we keep the notation, or we assume that $\pi_{i,q}$ has level $q$.
Thus, in the latter case, it is a special representation $\sigma_{\chi_i} = \chi_i \cdot \Stein$, a quadratic twist of the Steinberg representation.
Here, $\chi_i$ is an unramified quadratic character determined by the value $\chi_i(q) \in \{ \pm 1\}$.
For concreteness, we note that the Fricke involution $\pi_{i,q} \begin{psmallmatrix}
    & -1 \\ q &
\end{psmallmatrix}$ acts on the standard newvector in this representation with eigenvalue $-\chi_i(q)$.

At a prime $p$ dividing $\disc(B)$, the local representation $\pi_{i,p}^B$ is one-dimensional.
The Jacquet-Langlands correspondence determines the representation $\pi_{i,p}$ to be a special representation as above (see \cite[Sec. 1.4]{watson} for more details).

The local $L$-factors are defined as follows (see \cite[Sec. 3.1]{watson} and \cite[Sec. 4.1]{woodbury}).
For a prime $p$, if $\pi_{i,p}$ is unramified for all $i$, then
\begin{equation}\label{eq:tripleunram}
    L_p(s, \otimes_i \pi_{i,p}) = \prod_{ \{\pm\}^3 } (1-\alpha_{1,p}^\pm \alpha_{2,p}^\pm \alpha_{3,p}^\pm p^{-s})^{-1}.
\end{equation}
If $\pi_{1,p}$ and $\pi_{2,p}$ are unramified and $\pi_{3,p} = \sigma_{\chi_3}$, then
\begin{equation}\label{eq:tripleram1}
    L_p(s, \otimes_i \pi_{i,p}) = \prod_{ \{\pm\}^2 } (1-\alpha_{1,p}^\pm \alpha_{2,p}^\pm \chi_3(p) p^{-s-1/2})^{-1}.
\end{equation}
If $\pi_{1,p}$ is unramified, $\pi_{3,p} = \sigma_{\chi_2}$ and $\pi_{3,p} = \sigma_{\chi_3}$, then
\begin{equation}\label{eq:tripleram2}
    L_p(s, \otimes_i \pi_{i,p}) = \prod_{ \{\pm\} } (1-\alpha_{1,p}^\pm \chi_2(p) \chi_3(p) p^{-s})^{-1}(1-\alpha_{1,p}^\pm \chi_2(p) \chi_3(p) p^{-s-1})^{-1}.
\end{equation}
If all representations are special, $\pi_{i,p} = \sigma_{\chi_i}$, then
\begin{equation}\label{eq:tripleram3}
    L_p(s, \otimes_i \pi_{i,p}) = (1- \chi_1(p) \chi_2(p) \chi_3(p) p^{-s-1/2})^{-2}(1- \chi_1(p) \chi_2(p) \chi_3(p) p^{-s-3/2})^{-1}.
\end{equation}

At the archimedean place, let
\begin{displaymath}
    \Gamma_{\R}(s) = \pi^{-s/2} \Gamma(s/2).
\end{displaymath}
Since we are considering only the principal series $\mathcal{P}(s_i, \delta_i)$, we put $\delta \equiv \sum_i \delta_i \in \{0,1\}$ and record that
\begin{displaymath}
    L_\infty(s, \otimes_i \pi_{i,\infty}) = \prod_{\{\pm\}^3} \Gamma_{\R}(s \pm s_1 \pm s_2 \pm s_3 + \delta).
\end{displaymath}

We also need the adjoint or symmetric square $L$-function.
At a prime $p$, if $\pi_{i,p}$ is unramified,
\begin{equation}\label{eq:localsym2}
    L_p(s, \sym^2 \pi_{i,p}) = \left[ (1 - \alpha_{i,p}^2 p^{-s}) (1- p^{-s}) (1 - \alpha_{i,p}^{-2} p^{-s}) \right]^{-1},
\end{equation}
and, in the ramified case,
\begin{displaymath}
    L_p(s, \sym^2 \sigma_{\chi_i}) = (1 - p^{-s-1})^{-1}.
\end{displaymath}
At the archimedean place,
\begin{displaymath}
    L_\infty(s, \sym^2 \pi_{i, \infty}) = \Gamma_\R (s + 2s_i) \Gamma_\R (s) \Gamma_\R (s - 2s_i).
\end{displaymath}

All $L$-functions denoted by $L(s, \ldots)$ are now defined as the product of the local $L$-functions over the finite places, that is,
\begin{displaymath}
    L(s, \ldots) = \prod_{v < \infty} L_v(s, \ldots),
\end{displaymath}
and we denote the completed $L$-functions\footnote{The literature on Ichino's formula and its applications has inconsistent notation. We note that \cite{ichino} and \cite{woodbury} use $L(s, \ldots)$ to denote the completed $L$-function, while \cite{nelson} uses our notation. Watson's notation \cite{watson} is also slightly different.} by
\begin{displaymath}
    \Lambda(s, \ldots) = L_\infty(s, \ldots) \cdot L(s, \ldots).
\end{displaymath}

\subsection{Watson-Ichino formula}

We now translate the inner products involving three forms appearing in \eqref{eq:spectral-expansion} into $L$-functions using the Watson-Ichino formula, making the relevant local constants explicit. 

\begin{prop} \label{prop:watson-ichino}
    Let $\phi_1, \phi_2, \phi_3$ be cuspidal Hecke--Maaß forms on $Y_1$ or $Y_q$.
    Then 
    \begin{displaymath}
        \frac{\left| \int_{Y_q} \phi_1 \phi_2 \bar \phi_3 \, d\mu_q \right|^2}{\prod_i \norm{\phi_i}^2} 
        = \frac{\zeta(2) C_\infty C_q C_B}{2} \cdot \frac{\Lambda(1/2, \phi_1 \times \phi_2 \times \phi_3)}{\prod_i \Lambda(1, \sym^2 \phi_i)},
    \end{displaymath}
    where
    \begin{displaymath}
        C_q = \begin{cases}
            1 & \text{for $\phi_1, \phi_2, \phi_3$ on $Y_1$}; \\
            \frac1q & \text{for $\phi_1$ on $Y_1$ and $\phi_2, \phi_3$ newforms on $Y_q$};\\
            \frac{(1+q^{-1})(1+\eta)}{q} & \text{for $\phi_1, \phi_2, \phi_3$ newforms with respective} \\
            & \text{Fricke eigenvalue $\eta_i$ and $\eta = \eta_1 \eta_2 \eta_3$};
        \end{cases}
    \end{displaymath}
    and
    \begin{displaymath}
        C_\infty = \frac{1 - \delta}{2}
    \end{displaymath}
    for $\delta \equiv \sum_i \delta_i$, $\delta \in \{0,1\}$, where $(-1)^{\delta_i}$ is the eigenvalue of $z \mapsto -\bar z$ acting on $\phi_i$.
    The constant $C_B$ depending on the quaternion algebra $B$ and the automorphic forms above, satisfying $C_B \ll \disc(B)^{-1}$ with absolute implied constant.

    If $\phi_1, \phi_2, \phi_3$ are cuspidal Hecke--Maaß forms, $\phi_1, \phi_2$ on $Y_1$ and $\phi_3$ a newform on $Y_q$, then
    \begin{displaymath}
        \frac{\left| \int_{Y_q} \phi_1 l_q(\phi_2) \bar \phi_3 \, d\mu_q \right|^2}{\prod_i \norm{\phi_i}^2} 
        = \frac{\zeta(2) C_\infty C_B}{2(q+1)} \cdot \frac{\Lambda(1/2, \phi_1 \times \phi_2 \times \phi_3)}{\prod_i \Lambda(1, \sym^2 \phi_i)},
    \end{displaymath}
    for some constant $C_B$ with the same properties as above.
\end{prop}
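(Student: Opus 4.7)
The plan is to derive both identities from Ichino's formula, which computes the global trilinear period in terms of central $L$-values weighted by local integrals. First I would adelize the setup: using the Jacquet--Langlands correspondence, attach to each Hecke--Maaß newform $\phi_i$ a pure tensor $\Phi_i = \otimes_v \Phi_{i,v}$ in the automorphic representation $\pi_i^B$ of $B^\times(\mathbb{A})$, and express the global integral $\int_{Y_q} \phi_1 \phi_2 \bar\phi_3 \, d\mu_q$ as a constant (coming from measure normalization and the covolume of $\Gamma_q$) times the adelic trilinear period $I(\Phi_1,\Phi_2,\Phi_3)$. Ichino's formula (as in Watson, Ichino, Woodbury, or Nelson) then gives
\begin{displaymath}
    \frac{|I(\Phi_1,\Phi_2,\Phi_3)|^2}{\prod_i \langle \Phi_i,\Phi_i\rangle}
    = \frac{\zeta(2)^2}{2^3} \cdot \frac{\Lambda(1/2, \pi_1\otimes\pi_2\otimes\pi_3)}{\prod_i \Lambda(1,\sym^2\pi_i)} \cdot \prod_v I_v^\natural,
\end{displaymath}
where $I_v^\natural$ is the normalized local trilinear integral, equal to $1$ at places where all $\pi_{i,v}$ are unramified and the chosen vectors are spherical.

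Next I would compute the nontrivial local integrals. At the archimedean place, with principal series $\mathcal{P}(s_i,\delta_i)$ and spherical vectors, the local integral is a standard Barnes-type integral whose evaluation produces the parity factor $C_\infty = (1-\delta)/2$, vanishing in the generic parity-forbidden case. At primes $p \mid \disc(B)$, all three local representations are special (discrete series after Jacquet--Langlands), and the local integral contributes an explicit rational number whose product over $p \mid \disc(B)$ is bounded by $\disc(B)^{-1}$; I absorb this into $C_B$. At the prime $q$, the three cases are computed using known formulas for the Gross--Prasad type local period on test vectors: the first case is trivial (everything unramified), the second case (where $\phi_1$ is spherical and $\phi_2,\phi_3$ are newvectors in the twisted Steinberg $\sigma_{\chi}$) gives $1/q$ by a direct calculation using the explicit models from \eqref{eq:tripleram1}, and the third case (all three newvectors at $q$) yields $(1+q^{-1})(1+\eta)/q$, where the $(1+\eta)$ factor reflects the sign $\eta_1\eta_2\eta_3$ of the Fricke involution on the test vector. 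Comparing the adelic to the classical volume normalization on $Y_q$ (a factor of $[\Gamma_1:\Gamma_q]^{-1} \asymp (q+1)^{-1}$) then assembles the stated identity.

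For the second formula, I would repeat the same strategy but replace the local test vector at $q$ in the second slot by $\pi_{2,q}(a_q)\Phi_{2,q}$, since $l_q\phi_2(z) = \phi_2(a_q z)$ corresponds locally to acting by the matrix $a_q$ with $\nr(a_q) = q$. Here $\pi_{1,q},\pi_{2,q}$ are unramified and $\pi_{3,q}$ is special of level $q$, and the new local integral is
\begin{displaymath}
    \int_{K_0(q)\backslash\mathrm{PGL}_2(\Q_q)} \Phi_{1,q}(g)\, (\pi_{2,q}(a_q)\Phi_{2,q})(g)\, \overline{\Phi_{3,q}(g)} \, dg,
\end{displaymath}
which one evaluates by decomposing into $K_0(q)$-double cosets; the translation by $a_q$ essentially swaps the role of the two cosets $K_0(q)$ and $wK_0(q)$ in $K$, and the newvector property of $\Phi_{3,q}$ selects a single coset, leaving a clean factor $1/(q+1)$ after accounting for the volume of $Y_q$ versus $Y_1$. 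Combined with the archimedean factor $C_\infty$ and the contribution $C_B$ at primes dividing $\disc(B)$, which are unaffected by this change, this gives the second identity.

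The main obstacle is the careful bookkeeping of local factors at $q$, especially in the third case of the first identity, where the Fricke eigenvalues enter nontrivially: the local test vectors decompose under the Atkin--Lehner involution, and tracking the sign $\eta = \eta_1\eta_2\eta_3$ requires using the explicit form of the newvector in the twisted Steinberg and its interaction with $\pi_{i,q}\bigl(\begin{smallmatrix} & -1\\ q&\end{smallmatrix}\bigr)$ as described in Section \ref{sec:invol}. A secondary subtlety is reconciling the conventions for the completed $L$-functions in \cite{watson}, \cite{ichino}, and \cite{woodbury} with the normalization chosen here; this is purely bookkeeping but must be done carefully to get the constant $\zeta(2)/2$ correct.
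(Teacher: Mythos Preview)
Your approach is essentially the same as the paper's: adelize, apply Ichino's formula, and evaluate the nontrivial local integrals (the paper simply cites Watson, Nelson, and Woodbury for the local constants at $q$ and at $p\mid\disc(B)$ rather than sketching the computations). Two small points to correct: the classical-to-adelic measure comparison gives a level-independent factor of $1/2$ (since $\mu_q$ is the \emph{probability} measure), so the factors $1/q$ and $1/(q+1)$ come entirely from the local integral at $q$, not from $[\Gamma_1:\Gamma_q]^{-1}$; and for the second identity the paper needs an extra change of variables $g\xi_2^{-1}\mapsto h$ to reduce the local integral with $\pi_{2,q}(a_q)$ to Woodbury's standard computation, since $a_q$ need not equal $\diag(q,1)$ and the Eichler order need not be the standard one.
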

\begin{proof}
    We adelize the newforms $\phi_1, \phi_2, \phi_3$ (see, for instance, \cite[Chap. 5]{gelbart}) and generate representations $\pi_1^B, \pi_2^B, \pi_3^B$, respectively.
    Denote by $\phi'_2$ the shifted vector corresponding to $l_q \phi_2$ inside $\pi_2^B$. 
    If we let $[B^\times]$ denote the automorphic quotient $B^\times(\Q) \cdot Z(\A) \backslash B^\times(\A)$, where $Z$ is the center of $B^\times$, the Watson-Ichino formula, \cite[Thm. 1.1]{ichino} or \cite[Thm. 2.1]{woodbury}, implies that
    \begin{displaymath}
        \frac{\abs{\int_{[B^\times]} \phi_1 \phi_2 \bar \phi_3 \, db}^2}{\prod_i \int_{[B^\times]} \abs{\phi_i}^2 \, db} = \frac{\zeta(2)}{8} \cdot \frac{\Lambda(1/2, \phi_1 \times \phi_2 \times \phi_3)}{\prod_i \Lambda(1, \sym^2 \phi_i)} \cdot \prod_v C_v,
    \end{displaymath}
    where $db$ is the Tamagawa measure, as defined in \cite[Sec.~2.2]{woodbury}, and $C_v$ are local constants.
    The analogous formula holds when replacing $\phi_2$ by $\phi_2'$ and we note that the representations these two vectors generate are the same.

    Before computing the local constants, we translate to the classical integrals.
    Let $\Gamma$ be either $\Gamma_1$ or $\Gamma_q$ and $K \subset B^\times(\A)$ be such that
    \begin{displaymath}
        \Gamma \backslash \mathbb{H} \cong [B^\times]/K.
    \end{displaymath}
    The invariant probability measure $\mu_{\text{prob}}$ on $\Gamma \backslash \mathbb{H}$ is given through this isomorphism by the Tamagawa measure normalized by the volume of the space.
    We have $\vol([B^\times]/K) = \vol([B^\times])/\vol(K)$ and, as in \cite[Sec.~2.2]{woodbury}, the Tamagawa number of $PB^\times$ is $\vol([B^\times]) = 2$.
    We have the formal computation
    \begin{displaymath}
        \int_{\Gamma \backslash \mathbb{H}} \mu_{\text{prob}} = \vol([B^\times]/K)^{-1} \int_{[B^\times]/K} db = \frac{\vol(K)}{2} \int_{[B^\times]/K} db = \frac12 \int_{[B^\times]} db.
    \end{displaymath}
    
    The local constant at the prime $q$ was computed by several authors.
    For the first assertion in the proposition's statement, the computations can be found in the work of Watson \cite{watson} (see \cite[Prop.~4.4]{woodbury} for ease of comparison) and Nelson \cite{nelson} (especially for $\phi_1$ of level $1$ and $\phi_2, \phi_3$ of level $q$; see also \cite[Rem.~4.2]{nelson} and \cite[Prop.~4.3]{woodbury}).
    The constants at primes dividing $\disc(B)$ are also computed in the sources cited, e.g. \cite[Prop.~4.5]{woodbury}, and they are of the form $\delta \cdot p^{-1}(1-p^{-1})$ for $p \mid \disc(B)$ and $\delta \in \{0,1\}$.
    
    For the second assertion, we use the computation in \cite[Cor.~4.2]{woodbury}, which is valid in our case since $B$ is unramified at $q$.
    The only difference is the potential conjugation of the standard Eichler order.
    More precisely, we take the isomorphism $\Oc \cong \M_2(\Z_p)$ discussed in Section \ref{sec:hecke} and note that $a_q = \xi_1 \diag(q, 1) \xi_2$ for $\xi_1, \xi_2 \in \SL_2(\Z_p)$ and $\Oc_0(q)^\times_q = \xi_2^{-1} K_0 \xi_2$, where $K_0$ are the units of the standard Eichler order (we use the same notation as in \cite[Sec.~3]{woodbury}).
    Now $\phi_{3,q}$ is the newvector invariant under $\Oc_0(q)^\times_q$ and, thus, $\pi_{3,q}(\xi_2)\phi_{3,q}$ is the standard newvector invariant under $K_0$.
    When computing the matrix coefficient integral that gives the local constant, we make a change of variables $g \xi_2^{-1} \mapsto h$ and bring it back to the setting of \cite[Cor.~4.2]{woodbury}, using that $\phi_1$ is invariant under $\xi_2$ and that $\phi_2$ is invariant under $\xi_1$.    
\end{proof}

\begin{rem} \label{rem:trivial-vanishing}
    It is clear that $\int_{Y_q} \phi_1 \phi_2 \bar \phi_3$ vanishes if the product of the Fricke eigenvalues of $\phi_1$, $\phi_2$, $\phi_3$ is $-1$, as the formula in the previous proposition also implies.
    Similarly, the automorphism of $Y_q$ given by $z \mapsto -\bar z$ acts on $\phi_i$ by $(-1)^{\delta_i}$, where $\delta_i$ is the sign of the archimedean principal series representation in the decomposition of the associated automorphic representation of $\phi_i$.
    Consequently, if $\sum \delta_i$ is odd, then the triple product integral also vanishes, fact which is recorded in the constant $C_\infty$.
    For the rest of this paper, we assume that $\sum \delta_i $ is even.
\end{rem}

Preparing to insert the Watson-Ichino formula into the spectral expansion, we gather here bounds for the archimedean part of the $L$-function expression above.

\begin{lemma}\label{lem:stirling}
    Let $\phi_1,\phi_2,\phi_3$ be non-constant Hecke--Maaß forms on $Y_1$ or $Y_q$, as in any of the cases of Proposition~\ref{prop:watson-ichino}, with parameters $s_j = \sigma_j + it_j$, where we may assume that $t_j, \sigma_j \geq  0$, and signs $\delta_j \in \{0, 1\}$.
    Assume that $\sum_j \delta_j$ is even.
    Then it holds that
    \begin{align*}
        &\frac{L_\infty(1/2,\phi_1\times \phi_2\times \phi_3)}{\prod_iL_\infty(1,\sym^2 \phi_i)} \\
        &\ll \exp\left(-\frac{\pi}{2}\left( \sum_{\epsilon_1,\epsilon_2\in\{\pm 1\}} |t_{1}+\epsilon_1 t_{2}+\epsilon_2 t_{3}| -2\sum_it_{i} \right)\right) \\
        &\cdot \prod_{\epsilon_1,\epsilon_2\in\{\pm 1\}} (|t_{1}+\epsilon_1 t_{2}+\epsilon_2 t_{3}|+1)^{-1/2}.
    \end{align*}
\end{lemma}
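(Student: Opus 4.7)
The estimate reduces to Stirling's formula applied to each $\Gamma$-factor. Recall that $\Gamma_\R(s)=\pi^{-s/2}\Gamma(s/2)$, so uniformly in $\sigma$ lying in a fixed bounded set (and away from the poles $0,-2,-4,\dots$),
$$|\Gamma_\R(\sigma+it)|\asymp (|t|+1)^{(\sigma-1)/2}\,e^{-\pi|t|/4}.$$
By Kim--Sarnak we have $|\sigma_j|\le 7/64$ and so all the real parts appearing below stay in a fixed compact set disjoint from the poles, making Stirling applicable uniformly.

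First, I would expand the numerator and denominator explicitly using the formulas from Section~\ref{sec:l-functions}. Under the parity hypothesis $\sum_j\delta_j$ even we have $\delta=0$, so
$$L_\infty(1/2,\phi_1\times\phi_2\times\phi_3)=\prod_{\epsilon\in\{\pm\}^3}\Gamma_\R\!\left(\tfrac12+\epsilon_1 s_1+\epsilon_2 s_2+\epsilon_3 s_3\right),$$
and $L_\infty(1,\sym^2\phi_i)=\Gamma_\R(1+2s_i)\Gamma_\R(1)\Gamma_\R(1-2s_i)$. Stirling applied to the pair $\Gamma_\R(1\pm 2s_i)$ gives $(|t_i|+1)^{\pm\sigma_i}e^{-\pi|t_i|/2}$, so the polynomial parts cancel and
$$L_\infty(1,\sym^2\phi_i)\asymp e^{-\pi|t_i|}.$$
Multiplying over $i$ produces the exponential $e^{-\pi\sum_i|t_i|}$ to be divided into the numerator.

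Next, I would analyze the numerator factor-by-factor. The $\epsilon$-th factor has imaginary part $\epsilon_1 t_1+\epsilon_2 t_2+\epsilon_3 t_3$ and real part $\tfrac12+\sum_j\epsilon_j\sigma_j$, giving polynomial exponent $-\tfrac14+\tfrac12\sum_j\epsilon_j\sigma_j$. The key observation is that $\{\pm\}^3$ is invariant under $\epsilon\mapsto -\epsilon$: the two factors in such a pair share the same magnitude $|t_1+\eta_1 t_2+\eta_2 t_3|$ (with $(\eta_1,\eta_2)=(\epsilon_1\epsilon_2,\epsilon_1\epsilon_3)\in\{\pm 1\}^2$) while their $\sigma$-dependent polynomial shifts are opposite and cancel. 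Each of the four pairs thus contributes $(|t_1+\eta_1 t_2+\eta_2 t_3|+1)^{-1/2}$ to the polynomial part, which is exactly the claimed product. For the exponential, the same pairing shows
$$\sum_{\epsilon\in\{\pm\}^3}|\epsilon_1 t_1+\epsilon_2 t_2+\epsilon_3 t_3|=2\sum_{\eta_1,\eta_2\in\{\pm 1\}}|t_1+\eta_1 t_2+\eta_2 t_3|,$$
so multiplying the Stirling exponentials yields $\exp\!\left(-\tfrac{\pi}{2}\sum_{\eta_1,\eta_2}|t_1+\eta_1 t_2+\eta_2 t_3|\right)$ for the numerator. Combining with the denominator exponential $e^{\pi\sum_i|t_i|}$ (and using $t_i\ge 0$ so $|t_i|=t_i$) gives precisely the stated bound.

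This is essentially a bookkeeping argument; the only minor subtlety is to ensure uniformity of Stirling in the non-tempered strip $|\sigma_j|\le 7/64$ and to observe that the arguments stay bounded away from the poles of $\Gamma_\R$ (real parts near $1/2$ for the numerator and near $1$ for the denominator). I do not expect any genuine obstacle; the only thing to be careful about is that the statement uses $\ll$, so I only need an upper bound and need not worry about optimizing constants.
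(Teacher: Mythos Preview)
Your proof is correct and follows the same approach as the paper: both rewrite the archimedean factors explicitly, apply Stirling to each $\Gamma_\R$, and exploit the $\epsilon\mapsto -\epsilon$ symmetry to cancel the $\sigma_j$-dependent polynomial exponents. Your organization via the pairing is somewhat cleaner than the paper's, which instead splits into tempered and non-tempered cases and checks pole-avoidance case by case (noting $1/2-\sum\sigma_i>0.17$ when all three are non-tempered), but the underlying computation is identical.
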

\begin{proof}
    Recall, by the classification of admissible principal series, that $s_j \in i\R \cup [-7/64, 7/64]$.
    By duality, we may also swap $s_j$ for $-s_j$ and we can thus assume that $\sigma_j, t_j \geq 0$.
    
    By observing that $\Gamma_{\R}(s) \asymp \Gamma(s/2)$ for $\Re(s) \ll 1$, we recall from Section \ref{sec:l-functions} (see also Remark \ref{rem:trivial-vanishing}) that
    $$L_\infty(s,\phi_1 \times \phi_2 \times \phi_3) \asymp \prod_{\{\pm 1\}^3} \Gamma\left(\frac{s+\sum_{j} \pm s_{j}}{2}\right),$$
    and 
    $$L_\infty(s,\sym^2 \phi) \asymp \Gamma\left(\frac{s+2s_{\phi}}{2}\right)\Gamma\left(\frac{s-2s_{\phi}}{2}\right)\Gamma\left(\frac{s}{2}\right).$$
    In the case that all representations are tempered, i.e. $\sigma_i = 0$ for all $i$, the result follows directly from Stirling's approximation \cite[eq.~(B.7)]{iwaniec-spectral}, namely
    $$\Gamma(\sigma+it) \asymp t^{\sigma-1/2} e^{-\pi t/2}$$
    for large $t>0$, which holds uniformly in vertical strips of bounded width.
    When $t$ is small, we replace $t^{\sigma-1/2}$ with $(1+t)^{\sigma - 1/2}$ and take care that $\sigma$ is not too close to a pole of the Gamma function.
    If, for instance, $\phi_1$ is tempered, but $\phi_2$ or $\phi_3$ is non-tempered, then we may still use Stirling's formula where adequate, and, otherwise, the bounds towards Selberg's conjecture ensure that we stay away from any poles of the Gamma function.
    In the triple product, the exponents in the factors $t^{\sigma - 1/2}$ sum up and, since the signs alternate, $\sigma_2$ or $\sigma_3$ do not contribute.
    If all $\phi_i$ are non-tempered, the bounds towards Selberg imply a uniform bound since $1/2 - \sum \sigma_i > 0.17$.
\end{proof}

\begin{rem}
    As observed in the proof of the preceding lemma, the spectral parameter $s_\phi = \sigma_\phi + i t_\phi$ of an Maaß form $\phi$ can be assumed to have $t_\phi \geq 0$.
    We assume this in the rest of the paper for all automorphic forms appearing.
\end{rem}

\section{The spectral expansion and preliminary reductions}
We now restrict to $B/\Q$ being an indefinite non-split quaternion algebra. 
This means that, for $q$ coprime to $\disc(B)$, the surface $Y_q$ defined in Section \ref{sec:hecke} is compact, and so the Laplacian on $Y_q$ has pure point spectrum. 
In the sequel, we will restrict to $q$ being prime.

Let $F$ be a Hecke--Maaß newform on $Y_q$ with spectral parameter $s_F = \sigma_F + it_F$ and denote by $\mu_F=|F|^2\mu_q$ the associated probability measure.
The goal is to prove effective equidistribution on $Y_1\times Y_1$ of the push-forward measure $(\iota_q)_\ast \mu_F$ as $q\rightarrow \infty$. 
Our approach is spectral and will rely on the following version of Weyl's criterion. Let $\Bc(Y_1)$ be a fixed orthonormal basis of Hecke--Maaß forms for $Y_1$. 
\begin{lemma}[Weyl's criterion]\label{lem:Weyl}
Let $\nu_n$ be a sequence of probability measures on $Y_1\times Y_1$. Let $E:\Z_{>0}\rightarrow \R_{\geq 0}$ be a function such that $E(n)\rightarrow 0$ as $n\rightarrow \infty$. Assume that there exists a constant $A>0$ such that for $f_1,f_2\in \Bc(Y_1)$ which are not both constant,
\begin{equation*}
    \nu_n(f_1\otimes f_2)\ll (t_{f_1}+1)^{A}(t_{f_2}+1)^{A}E(n). 
\end{equation*}
Then $\nu_n$ equidistributes on $Y_1\times Y_1$ as $n\rightarrow \infty$ with effective rate $E(n)$.
Explicitly, for any smooth function $\psi: Y_1\times Y_1\rightarrow \C$ it holds that
\begin{equation*}
    \nu_n(\psi)=(\mu_1\otimes \mu_1)(\psi)+O(\mathcal{S}_A(\psi)E(n)),
\end{equation*}
as $n\rightarrow \infty$, where $\mathcal{S}_A$ denotes an appropriate Sobolev norm depending on $A$. 
\end{lemma}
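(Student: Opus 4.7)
The plan is to expand the smooth test function $\psi$ in the orthonormal basis $\{f_1 \otimes f_2 : f_1, f_2 \in \mathcal{B}(Y_1)\}$ of $L^2(Y_1 \times Y_1)$, apply the hypothesis termwise, and establish convergence through the smoothness of $\psi$ combined with the Weyl law. Since $Y_1$ is compact the Laplacian has pure point spectrum and the basis above is complete, so I can write
\begin{displaymath}
    \psi = \sum_{f_1, f_2 \in \mathcal{B}(Y_1)} c_{f_1,f_2} \cdot (f_1 \otimes f_2), \qquad c_{f_1,f_2} := \inner{\psi, f_1 \otimes f_2}_{Y_1 \times Y_1},
\end{displaymath}
with convergence in $L^2$ (and in fact uniform, once the Fourier coefficients are shown to decay fast enough). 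Pairing with $\nu_n$, and isolating the contribution of the constant function $\mathbf 1 \otimes \mathbf 1 \in \mathcal{B}(Y_1) \otimes \mathcal{B}(Y_1)$, I obtain
\begin{displaymath}
    \nu_n(\psi) = (\mu_1 \otimes \mu_1)(\psi) + \sum_{(f_1,f_2) \neq (\mathbf 1,\mathbf 1)} c_{f_1,f_2}\, \nu_n(f_1 \otimes f_2),
\end{displaymath}
using that $\nu_n$ is a probability measure, so $\nu_n(\mathbf 1 \otimes \mathbf 1) = 1$, and that $c_{\mathbf 1, \mathbf 1} = (\mu_1 \otimes \mu_1)(\psi)$.

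Next I apply the hypothesis to each remaining term to bound the error by
\begin{displaymath}
    E(n) \cdot \sum_{(f_1,f_2) \neq (\mathbf 1,\mathbf 1)} |c_{f_1,f_2}| \cdot (t_{f_1}+1)^A (t_{f_2}+1)^A.
\end{displaymath}
The key point is to control the Fourier coefficients $c_{f_1,f_2}$ using smoothness of $\psi$. Each non-constant $f_i$ is an eigenfunction of the hyperbolic Laplacian $\Delta$ on $Y_1$ with eigenvalue $\lambda_{f_i} = 1/4 + t_{f_i}^2$, so $f_1 \otimes f_2$ is an eigenfunction of the sum Laplacian $\Delta_1 + \Delta_2$ on $Y_1 \times Y_1$ with eigenvalue $\lambda_{f_1} + \lambda_{f_2}$, which is $\gg 1 + t_{f_1}^2 + t_{f_2}^2$ whenever $(f_1, f_2) \neq (\mathbf 1, \mathbf 1)$ by the spectral gap on $Y_1$. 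Integration by parts against the self-adjoint operator $(1 + \Delta_1 + \Delta_2)^N$ then yields, for any integer $N \geq 0$,
\begin{displaymath}
    |c_{f_1,f_2}| \ll_N (1 + t_{f_1}^2 + t_{f_2}^2)^{-N} \cdot \mathcal{S}_N(\psi),
\end{displaymath}
where $\mathcal{S}_N(\psi) := \|(1 + \Delta_1 + \Delta_2)^N \psi\|_{L^2(Y_1 \times Y_1)}$ is a Sobolev-type norm.

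Finally, I conclude by the Weyl law for $Y_1$, which gives $\#\{f \in \mathcal{B}(Y_1) : t_f \leq T\} \ll T^2$, so the sum
\begin{displaymath}
    \sum_{(f_1,f_2) \neq (\mathbf 1,\mathbf 1)} (t_{f_1}+1)^A (t_{f_2}+1)^A (1 + t_{f_1}^2 + t_{f_2}^2)^{-N}
\end{displaymath}
converges as soon as $N$ is chosen large enough in terms of $A$ (for instance $N \geq A + 3$ suffices after dyadic decomposition). Fixing such an $N$ and setting $\mathcal{S}_A(\psi) := \mathcal{S}_N(\psi)$ gives the claimed error bound $O(\mathcal{S}_A(\psi) E(n))$. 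There is no real obstacle here beyond routine bookkeeping of Sobolev indices; the statement is a standard instance of Weyl's criterion adapted to the compact arithmetic setting, with the novelty residing purely in the later application of the hypothesis to the specific measures $\nu_n = (\iota_q)_\ast \mu_F$.
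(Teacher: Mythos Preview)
Your proof is correct and follows essentially the same strategy as the paper's: spectrally expand $\psi$, isolate the constant term using that $\nu_n$ is a probability measure, apply the hypothesis termwise, control the Fourier coefficients via self-adjointness of the Laplacian, and conclude convergence by the Weyl law. The only cosmetic differences are that the paper uses the operator $(\Delta\otimes\Delta)^B$ (with eigenvalue $\lambda_{f_1}^B\lambda_{f_2}^B$) and an $L^\infty$-Sobolev norm, whereas you use $(1+\Delta_1+\Delta_2)^N$ and an $L^2$-Sobolev norm; your choice in fact handles the case where exactly one of $f_1,f_2$ is constant more cleanly.
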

\begin{proof}
  The proof is quite standard and we will simply sketch the arguments. 
  We spectrally expand
  \begin{align*}
    \nu_n(\psi)&=\sum_{f_1,f_2\in \Bc(Y_1)} \langle \psi,f_1\otimes f_2\rangle_{\mu_1\otimes \mu_1} \nu_n(f_1\otimes f_2)\\
    &=(\mu_1\otimes \mu_1)(\psi)+O\left(E(n)\sum_{\substack{f_1,f_2\in \Bc(Y_1),\\ (f_1,f_2)\neq (1,1)}} \langle \psi,f_1\otimes f_2\rangle_{\mu_1\otimes \mu_1} (t_{f_1}+1)^A(t_{f_2}+1)^A  \right),  
  \end{align*}
  using that $\mu_n$ are probability measures to pick out the contribution from $f_1=f_2=1$. 
  Now, by self-adjointness of the Laplacian $\Delta$, we have for any $B>1$ that
  $$\langle \psi,f_1\otimes f_2\rangle_{\mu_1\otimes \mu_1} = \lambda_{f_1}^{-B}\lambda_{f_2}^{-B}\langle (\Delta\otimes \Delta)^B\psi,f_1\otimes f_2\rangle_{\mu_1\otimes \mu_1}.  $$
  Clearly $|\!|(\Delta\otimes \Delta)^B\psi|\!|_\infty\leq \mathcal{S}_B(\psi)$ where $\mathcal{S}_B$ denotes the $L^\infty$-Sobolev norm depending on the $2B$ first derivatives.
  Picking $B$ sufficiently large and using the Weyl law, we conclude.
  Note that we may approximate $\lambda_{f_1}$ by $1 + t_{f_1}^2 \asymp (1 + t_{f_1})^2$ since we have uniform bounds for non-tempered spectral parameters and a uniform spectral gap \eqref{eq:KimSarnak-infty}, for example.
\end{proof}
In the setting of the above lemma, we refer to $\nu_n(f_1\otimes f_2)$ as the \emph{Weyl sums} of the equidistribution problem. In our setting we will apply the Weyl criterion with error-term $(\log q)^{-1/4+\varepsilon}$ for any $\varepsilon$.

Note first of all that, if exactly one of  $f_1,f_2\in \Bc(Y_1)$ is constant, then we can easily bound the Weyl sums under the assumption of GLH. 
By Remark \ref{rem:fricke-w-invariance} regarding the Fricke involution, we may switch $f_1$ and $f_2$ and thus assume that $f_2$ is the constant function.
We are now in the case of the Kowalski--Michel--VanderKam conjecture.
\begin{prop} \label{prop:que-level-glh}
   Under GLH, for any $\varepsilon>0$, we have
   \begin{equation*}
       \langle |F|^2, f_1\rangle _q \ll_{\varepsilon, f_1} q^{-1/2+\varepsilon},
   \end{equation*}
   where the implied constant depends polynomially on the eigenvalue of $f_1$.
\end{prop}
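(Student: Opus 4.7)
The plan is to invoke the Watson--Ichino formula and reduce to bounds on central values of degree $6$ and degree $2$ $L$-functions, both of which are covered by GLH. Since $f_1$ lives on $Y_1$ while $F$ is a newform of prime level $q$, we apply Proposition~\ref{prop:watson-ichino} with $\phi_1=f_1$ (pulled back via $\alpha_q$) and $\phi_2=\phi_3=F$, falling into the case $C_q = 1/q$. Remembering that $\|F\|=\|f_1\|=1$ and that the constant $C_B$ is harmless (depending only on $B$), this yields
\begin{equation*}
    |\langle |F|^2, f_1\rangle_q|^2 \ll \frac{1}{q}\cdot \frac{\Lambda(1/2, f_1\times F\times F)}{\Lambda(1,\sym^2 f_1)\Lambda(1,\sym^2 F)^2}.
\end{equation*}

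Next, I would use the factorization
\begin{equation*}
    L(s, f_1\times F\times F) \;=\; L(s, f_1\otimes \sym^2 F)\cdot L(s,f_1),
\end{equation*}
which arises from the decomposition $F\otimes F = \sym^2 F\oplus \mathbf{1}$ afforded by the triviality of the central character of $F$. At the prime $q$ this identity needs to be checked directly from the local $L$-factors, using \eqref{eq:tripleram2} on the left and \eqref{eq:localsym2} on the right (combined with the fact that the quadratic character $\chi$ attached to $F$ at $q$ satisfies $\chi^2=1$); at the archimedean place and at unramified primes it is standard. Applying GLH separately to each factor gives $L(1/2, f_1\otimes \sym^2 F)\ll_\varepsilon \mathbf{c}(f_1\otimes\sym^2 F)^\varepsilon$ and $L(1/2, f_1)\ll_\varepsilon \mathbf{c}(f_1)^\varepsilon$; since both conductors are bounded by a fixed power of $q(1+|t_F|)\mathbf{c}(f_1)$, the numerator is $\ll_{\varepsilon,f_1} q^\varepsilon (1+|t_F|)^\varepsilon$.

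For the denominator, one uses the classical unconditional lower bound $L(1,\sym^2 F)\gg (\log q(1+|t_F|))^{-c}$ of Hoffstein--Lockhart type (Lemma~\ref{lem:sym2sym4} guarantees holomorphy and the extended Selberg class properties needed to invoke it), together with the fact that $L(1,\sym^2 f_1)$ is a positive constant depending only on $f_1$ by \eqref{eq:positivesym2}. The archimedean ratio of Gamma factors is then controlled by Lemma~\ref{lem:stirling}: Stirling's formula produces exponential decay in $t_F$ coming from the terms $|t_{f_1}\pm 2t_F|$, which more than compensates for the single $e^{-\pi|t_F|/2}$ coming from each copy of $L_\infty(1,\sym^2 F)$, giving a net polynomial-and-exponential decay in $t_F$. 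Combining everything,
\begin{equation*}
    |\langle |F|^2, f_1\rangle_q|^2 \ll_{\varepsilon, f_1} q^{-1+\varepsilon},
\end{equation*}
with polynomial dependence on $(1+|t_{f_1}|)$, and taking square roots yields the proposition.

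The only genuine subtlety is the verification of the $L$-function factorization at the ramified prime $q$; everything else amounts to bookkeeping with analytic conductors and Gamma factors. In particular, because the archimedean ratio decays in $t_F$, the bound is uniform in the spectral parameter of $F$, as the statement requires.
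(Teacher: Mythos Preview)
Your approach is essentially the paper's: Watson--Ichino with $C_q = 1/q$, GLH on the central value, Hoffstein--Lockhart for $L(1,\sym^2 F)^{-1}$, and Lemma~\ref{lem:stirling} for the gamma ratio. Two small points are worth correcting.

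First, the factorization $L(s, f_1 \times F \times F) = L(s, f_1 \otimes \sym^2 F)\,L(s, f_1)$ is an unnecessary detour; the paper simply applies GLH directly to the degree-$8$ triple product, which is already an $L$-function in the sense of Section~\ref{sec:l-functions}. This sidesteps the local check at $q$ that you flag as the only genuine subtlety.

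Second, your description of the archimedean behaviour is not quite right: there is \emph{no} net exponential decay in $t_F$. As the paper's proof records, the exponent in Lemma~\ref{lem:stirling} with $(t_1,t_2,t_3)=(t_{f_1},t_F,t_F)$ is $2\max(t_{f_1} - 2t_F, 0)$, which vanishes once $t_F \geq t_{f_1}/2$; the exponentials from numerator and denominator exactly cancel, they do not ``more than compensate''. The uniformity in $t_F$ comes entirely from the polynomial factor
\[
(|2t_F + t_{f_1}|+1)^{-1/2}(|2t_F - t_{f_1}|+1)^{-1/2} \ll_{f_1} (1+t_F)^{-1},
\]
which absorbs the $(q(1+t_F))^\varepsilon$ contributed by GLH and by $L(1,\sym^2 F)^{-2}$ via Hoffstein--Lockhart. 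Your phrase ``polynomial-and-exponential decay'' should just be ``polynomial decay''.
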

\begin{proof}
    This follows directly from the Watson-Ichino formula, Proposition~\ref{prop:watson-ichino}, bounds for archimedean $L$-factors, Lemma~\ref{lem:stirling}, and a theorem of Hoffstein--Lockhart \cite{hoffstein-lockhart} that asserts $L(1, \sym^2 F)^{-1} \ll (\lambda_F q)^{\varepsilon}$ and $L(1, \sym^2 f_1) \ll \lambda_{f_1}^\varepsilon$.
    For the archimedean factors, we note here that 
    \begin{displaymath}
        \sum_{\{\pm\}^2}\abs{t_F \pm t_F \pm t_{f_1}} - 4t_F - 2t_{f_1} = 2\max( t_\psi-2t_F,0) \geq 0,
    \end{displaymath}
    and we observe that $L(1, \sym^2 F)^{-1} \cdot (\abs{2t_F + t_{f_1}})^{-1/2} \ll q^{\varepsilon}$.
    Thus, the dependence on $t_F$ is removed.
\end{proof}

Thus, for the rest of the paper we may assume that both $f_1,f_2\in \Bc(Y_1)$ are non-constant. The key idea is to consider the spectral expansion
\begin{equation} \label{eq:spectral-expansion}
    \inner{f_1 \cdot (l_q f_2), \abs{F}^2}_q = \sum_{\varphi \in \Bc(Y_q)} \inner{f_1 \cdot (l_q f_2), \varphi}_q \cdot \inner{\varphi, \abs{F}^2}_q,
\end{equation}
where the sum is over an orthonormal basis $\Bc(Y_q)$ of Hecke--Maaß forms for~$Y_q$. 
This allows us to relate all the non-constant terms in the sum above to $L$-functions by Proposition~\ref{prop:watson-ichino}. 
We will now proceed to give some preliminary bounds and reductions.

\subsection{The old spectrum}
We first sieve out the old spectrum in the expansion \eqref{eq:spectral-expansion} above.

\begin{lemma}
    An orthonormal basis for the space of old forms on $Y_q$, i.e. those orthogonal to the newforms, for $q$ prime is given by 
    \begin{displaymath}
        \Bc_{\mathrm{old}}(Y_q) = \{\psi_l : \psi \in \Bc(Y_1),\, l \mid q \},
    \end{displaymath}
    where, if $\psi$ is not the constant function, then
    \begin{displaymath}
        \psi_1 = \psi \quad \text{and} \quad \psi_q = \frac{1}{\sqrt{1 - \tilde{\lambda}_\psi(q)^2}}(l_q\psi - \tilde{\lambda}_\psi(q) \cdot \psi),
    \end{displaymath}
    where $\tilde{\lambda}_\psi(q)$ is the $\tilde{T}_q$ eigenvalue of $\psi$.
    If $\psi$ is the constant function, then we set $\psi_1 = \psi_q = \psi$.
\end{lemma}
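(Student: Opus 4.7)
The plan is to verify the claim by direct computation, using Lemma \ref{lemma:hecke-corr-op} and the fact that the covering maps $\alpha_q, \beta_q: Y_q \to Y_1$ push the probability measure $\mu_q$ forward to $\mu_1$. The starting point is the classical fact that, for $q$ prime, the space of old forms on $Y_q$ is the linear span of $\{\alpha_q^\ast\psi, \beta_q^\ast\psi : \psi \in \Bc(Y_1)\} = \{\psi, l_q\psi : \psi \in \Bc(Y_1)\}$, where each non-constant $\psi$ contributes a two-dimensional subspace and the constant function contributes only a one-dimensional subspace (since $l_q \cdot 1 = 1$), giving the correct total dimension count for the old spectrum.

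First I would compute the basic norms and inner products. Since $(\alpha_q)_\ast \mu_q$ and $(\beta_q)_\ast \mu_q$ are $G$-invariant probability measures on $Y_1$ and hence both equal to $\mu_1$, we have $\|\psi\|_q = \|l_q\psi\|_q = \|\psi\|_1 = 1$. For the cross-term, Lemma \ref{lemma:hecke-corr-op} applied with $f_1 = \psi$ and $\bar f_2 = \psi$ gives
\begin{displaymath}
    \langle \psi, l_q\psi\rangle_q = \langle \psi, \tilde T_q \psi\rangle_1 = \tilde\lambda_\psi(q),
\end{displaymath}
which is real by \eqref{eq:reallambda}. For orthogonality between the blocks associated to distinct $\psi, \psi' \in \Bc(Y_1)$, the pairings $\langle \psi, \psi'\rangle_q$ and $\langle l_q\psi, l_q\psi'\rangle_q$ again reduce to $\langle \psi, \psi'\rangle_1 = 0$ via the same push-forward argument, while $\langle \psi, l_q\psi'\rangle_q = \tilde\lambda_{\psi'}(q) \langle \psi, \psi'\rangle_1 = 0$ via Lemma \ref{lemma:hecke-corr-op}, and the symmetric off-diagonal pairing vanishes similarly.

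Finally, within the two-dimensional block associated to a non-constant $\psi$, Gram--Schmidt gives $\psi_1 = \psi$ unchanged, while the component of $l_q\psi$ orthogonal to $\psi_1$ is $l_q\psi - \tilde\lambda_\psi(q)\psi$ with squared norm $1 - \tilde\lambda_\psi(q)^2$; rescaling yields the claimed expression for $\psi_q$. For $\psi = 1$ the convention $\psi_1 = \psi_q = 1$ is forced. The only subtle point, and hence the main potential obstacle, is ensuring that the normalizing factor $\sqrt{1 - \tilde\lambda_\psi(q)^2}$ is strictly positive for non-constant $\psi$, so that $\psi$ and $l_q\psi$ span a genuine two-dimensional subspace of $L^2(Y_q)$. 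This follows from any nontrivial bound towards the Ramanujan conjecture; concretely, the Kim--Sarnak bound \eqref{eq:KimSarnak} yields $|\tilde\lambda_\psi(q)| \leq q^{-25/64} < 1$ for $q \geq 2$ in the geometric normalization used in the paper.
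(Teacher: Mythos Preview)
Your proof is correct and follows essentially the same approach as the paper: both invoke the classical fact that old forms are spanned by $\{\psi, l_q\psi\}$ and then apply Gram--Schmidt, using Lemma~\ref{lemma:hecke-corr-op} to compute $\langle \psi, l_q\psi\rangle_q = \tilde\lambda_\psi(q)$. Your argument is slightly more thorough in two respects: you explicitly verify orthogonality between the blocks for distinct $\psi$, and you justify the positivity of $1 - \tilde\lambda_\psi(q)^2$ via Kim--Sarnak, a point the paper leaves implicit here (only invoking it later in Lemma~\ref{lemma:old-spectrum-negligible}); the paper, on the other hand, computes $\|l_q\psi\|_q^2$ via the self-adjointness of $\tilde T_q$ acting on the constant function, whereas you use the equivalent push-forward argument $(\beta_q)_\ast\mu_q = \mu_1$.
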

\begin{proof}
    This follows from the fact that the set $\{\psi, l_q\psi \mid \psi \in \Bc(Y_1)\}$ spans the space of old forms (see \cite[Sec.~8.5]{iwaniec-spectral}, for instance -- the theory for congruence subgroups applies here since these are local properties) and the Gram-Schmidt process.
    Indeed, we start with $\psi_1$, which is a normalized vector by Remark \ref{rem:norms}.
    To add $l_q \psi$, we simply recall Lemma~\ref{lemma:hecke-corr-op} which computes that
    \begin{displaymath}
        \inner{\psi, l_q \psi}_q = \tilde{\lambda}_\psi(q).
    \end{displaymath}
    We finish by applying the same result to compute that
    \begin{align*}
        \norm{l_q\psi - \tilde{\lambda}_\psi(q) \cdot \psi}^2 &= \norm{l_q \psi} - \tilde{\lambda}_\psi(q)^2\\
        &= \inner{l_q(\psi \bar{\psi}), 1}_q - \tilde{\lambda}_\psi(q)^2 \\
        &= \inner{\psi \bar{\psi}, \tilde{T}_q 1}_q - \tilde{\lambda}_\psi(q)^2 \\
        &= 1 - \tilde{\lambda}_\psi(q)^2,
    \end{align*}
    using that the Hecke operator is self-adjoint and that it acts on constant functions with eigenvalue $1$.
\end{proof}

\begin{lemma} \label{lemma:old-spectrum-negligible}
    Under GLH, the old spectrum is negligible.
    More precisely,
    \begin{displaymath}
        \sum_{\varphi \in \Bc_{\mathrm{old}}(Y_q)} \inner{f_1 \cdot (l_q f_2), \varphi}_q \cdot \inner{\varphi, \abs{F}^2}_q \ll_{f_i}  q^{\theta + \varepsilon}
    \end{displaymath}
    where $\theta=\frac{7}{64}-\frac{1}{2}$ is the current best bound towards Ramanujan.
    Here the implied constant depends polynomially on the analytic conductor of $f_1$ and $f_2$.
\end{lemma}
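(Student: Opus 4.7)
The plan is to exploit the explicit orthonormal basis $\Bc_{\mathrm{old}}(Y_q) = \{\psi_l : \psi \in \Bc(Y_1),\, l \mid q\}$ from the preceding lemma. Writing the sum as
\begin{displaymath}
    \sum_{\psi \in \Bc(Y_1)} \sum_{l \in \{1,q\}} \inner{f_1 \cdot (l_q f_2), \psi_l}_q \inner{\psi_l, |F|^2}_q,
\end{displaymath}
I split into the contribution from the constant function $\psi = 1$ and the sum over non-constant $\psi$. For $\psi = 1$ (where $\psi_1 = \psi_q = 1$ contributes a single term), Lemma~\ref{lemma:hecke-corr-op} together with the self-adjointness of $\tilde T_q$ gives
\begin{displaymath}
    \inner{f_1 \cdot l_q f_2, 1}_q = \inner{f_1, l_q \bar f_2}_q = \inner{f_1, \tilde T_q \bar f_2}_1 = \tilde\lambda_{f_2}(q) \inner{f_1, \bar f_2}_1,
\end{displaymath}
bounded by $q^\theta$ via the Kim--Sarnak estimate \eqref{eq:KimSarnak}, while $\inner{1, |F|^2}_q = \|F\|^2 = 1$. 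This already produces the target size $q^\theta$.

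For non-constant $\psi$ I evaluate each of the four resulting inner products. Rewriting $f_1 \cdot l_q f_2 \cdot l_q \bar\psi = f_1 \cdot l_q(f_2 \bar\psi)$, applying Lemma~\ref{lemma:hecke-corr-op} to $f_1 \bar\psi$ or $f_1$ on $Y_1$, using self-adjointness of $\tilde T_q$, and the reality of Hecke eigenvalues \eqref{eq:reallambda}, one obtains
\begin{align*}
    \inner{f_1 \cdot l_q f_2, \psi}_q &= \tilde\lambda_{f_2}(q) \int_{Y_1} f_1 f_2 \bar\psi \, d\mu_1, \\
    \inner{f_1 \cdot l_q f_2, l_q \psi}_q &= \tilde\lambda_{f_1}(q) \int_{Y_1} f_1 f_2 \bar\psi \, d\mu_1,
\end{align*}
so by Kim--Sarnak both are $\ll q^\theta \cdot |\int_{Y_1} f_1 f_2 \bar\psi \, d\mu_1|$. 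On the $|F|^2$ side, I exploit the Fricke involution $W_q$: it preserves $\mu_q$ and $|F|^2$, and by Section~\ref{sec:invol} (using $w_q = \xi a_q$ with $\xi \in \Gamma_1$ and $a_q w_q \in q \Oc^1$) satisfies $\beta_q = \alpha_q \circ W_q$ as maps $Y_q \to Y_1$. Hence $l_q \psi = W_q^*(\alpha_q^* \psi)$, and a change of variables yields
\begin{displaymath}
    \inner{l_q \psi, |F|^2}_q = \inner{\psi, |F|^2}_q.
\end{displaymath}
The Gram--Schmidt formula $\psi_q = (1 - \tilde\lambda_\psi(q)^2)^{-1/2}(l_q \psi - \tilde\lambda_\psi(q)\psi)$ then gives $|\inner{\psi_l, |F|^2}_q| \leq (1 + O(q^\theta)) |\inner{\psi, |F|^2}_q|$ for both $l \in \{1, q\}$.

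To conclude, I apply Watson--Ichino (Proposition~\ref{prop:watson-ichino}, first case) to bound $|\inner{\psi, |F|^2}_q|^2 \ll q^{-1} L(1/2, \psi \otimes F \otimes F) \cdot (\text{arch}) / \prod L(1, \sym^2)$, and its $Y_1$-analogue for $\int_{Y_1} f_1 f_2 \bar\psi \, d\mu_1$; under GLH both central $L$-values are $\ll q^\epsilon$, while Hoffstein--Lockhart controls the symmetric-square $L$-values at $1$. Lemma~\ref{lem:stirling} gives exponential archimedean decay once $t_\psi \gg t_F + t_{f_1} + t_{f_2}$, effectively truncating the spectral sum to $\ll \mathrm{poly}(t_F, t_{f_1}, t_{f_2}, \log q)$ terms by Weyl's law. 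Each non-constant term contributes $\ll q^\theta \cdot q^{-1/2 + \epsilon}$, so the total non-constant contribution is $\ll q^{\theta - 1/2 + \epsilon}$, dominated by the $q^\theta$ from $\psi = 1$. The main technical step I anticipate is the Fricke-involution identity $\inner{l_q \psi, |F|^2}_q = \inner{\psi, |F|^2}_q$, since without it Watson--Ichino does not apply directly to the old form $l_q \psi$ and the $l = q$ piece would require spectrally expanding it in $\Bc(Y_q)$.
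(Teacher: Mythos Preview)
Your proof is correct and follows essentially the same route as the paper: the Hecke-operator identities $\inner{f_1 \cdot l_q f_2, \psi}_q = \tilde\lambda_{f_2}(q)\inner{f_1 f_2,\psi}_1$ and $\inner{f_1 \cdot l_q f_2, l_q\psi}_q = \tilde\lambda_{f_1}(q)\inner{f_1 f_2,\psi}_1$, the Fricke-involution identity $\inner{l_q\psi,|F|^2}_q=\inner{\psi,|F|^2}_q$, and then Watson--Ichino plus Stirling plus GLH plus Weyl's law to sum over $\psi$. The only noteworthy difference is your handling of the $l=1$ piece. The paper does not treat those terms individually; instead it resums via Parseval,
\[
\sum_{\psi\in\Bc(Y_1)} \inner{f_1\cdot l_q f_2,\psi}_q\inner{\psi,|F|^2}_q
= \tilde\lambda_{f_2}(q)\sum_{\psi}\inner{f_1 f_2,\psi}_1\inner{\psi,|F|^2}_q
= \tilde\lambda_{f_2}(q)\inner{f_1 f_2,|F|^2}_q,
\]
and bounds the last inner product trivially by $\|f_1 f_2\|_\infty$. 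This avoids invoking Watson--Ichino or GLH for the $l=1$ part altogether and in particular captures the $\psi=1$ term without singling it out. Your term-by-term treatment via Watson--Ichino works just as well and gives the same bound (indeed an extra $q^{-1/2}$ for the non-constant $\psi$), but the resummation is slightly slicker. One small imprecision: the archimedean decay that truncates the $\psi$-sum comes solely from the $f_1\otimes f_2\otimes\psi$ factor and cuts at $t_\psi\lesssim t_{f_1}+t_{f_2}$, so there is no $t_F$-dependence in the implied constant, matching the statement.
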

\begin{proof}
    Consider the contribution of the basis elements $\psi_1 \in \Bc_{\mathrm{old}}(Y_q)$ in \eqref{eq:spectral-expansion} for $\psi \in \Bc(Y_1)$.
    By Lemma~\ref{lemma:hecke-corr-op}, we have
    \begin{displaymath}
        \inner{f_1 \cdot l_q f_2, \psi}_q = \inner{l_q f_2, \bar{f}_1 \psi}_1 =  \tilde{\lambda}_{f_2}(q) \inner{f_1 f_2, \psi}_1
    \end{displaymath}
    and therefore,
    \begin{align*}
        \sum_{\psi \in \Bc(Y_1)} \inner{f_1 \cdot (l_q f_2), \psi}_q \cdot \inner{\psi, \abs{F}^2}_q 
        &= \tilde{\lambda}_{f_2}(q) \inner{\sum_{\psi \in \Bc(Y_1)} \inner{f_1 f_2, \psi}_1 \psi, \abs{F}^2}_q \\
        &= \tilde{\lambda}_{f_2}(q) \inner{f_1 f_2, \abs{F}^2}_q \ll_{f_1, f_2} q^{\theta},
    \end{align*}
    where the implied constant depends polynomially on the eigenvalue of $f_i$ using any sup-norm bound, e.g. \cite{iwaniec-sarnak}. 
    We used here the assumption that $f_2$ is not the constant function (see Proposition~\ref{prop:que-level-glh}) and also Remark \ref{rem:norms} to pass between $\inner{\cdot, \cdot}_q$ and $\inner{\cdot, \cdot}_1$ for old forms.

    Next, consider the contribution of the elements $\psi_q$, where we may assume that $\psi$ is not the constant function.
    Recall that
    \begin{displaymath}
        \psi_q = \frac{1}{\sqrt{1 - \tilde{\lambda}_\psi(q)^2}}(l_q\psi - \tilde{\lambda}_\psi(q) \cdot \psi),
    \end{displaymath}
    and that $\tilde{\lambda}_\psi(q) \ll q^\theta$ with $\theta < 0$.
    The latter also implies the bound $(1-\tilde{\lambda}_\psi(q)^2)^{-1/2} \ll 1$.
    For the second term, a treatment as above would be desirable, except the spectral expansion is now twisted by the normalization factor.
    We therefore apply the Watson-Ichino formula.

    Consider the first term in the definition of $\psi_q$, that is $(1-\tilde{\lambda}_\psi(q)^2)^{-1/2} \cdot l_q \psi$.
    Observe that, by self-adjointness of the Hecke operator,
    \begin{displaymath}
        \inner{f_1, l_q(\bar{f_2} \psi))} = \inner{\tilde{T}_q f_1, \bar{f}_2 \psi} = \tilde{\lambda}_{f_1}(q) \cdot \inner{f_1 f_2, \psi}.
    \end{displaymath}
    Additionally, by changing variables using the Fricke involution in the integral defining the inner product, we obtain
    \begin{displaymath}
        \inner{(l_q \psi), \abs{F}^2}_q = \inner{\psi, \abs{F}^2}_q,
    \end{displaymath}
    since $l_q W_q(\psi) = \psi$, as observed in Section \ref{sec:invol}, and $F$ is an eigenfunction of the involution $W_q$.

    By the Watson-Ichino formula, Proposition~\ref{prop:watson-ichino}, we deduce that
    \begin{displaymath}
        \inner{f_1 \cdot (l_qf_2), \psi_q} \ll q^\theta \sqrt{\frac{\Lambda(1/2, f_1 \times f_2 \times \psi)}{\prod_i \Lambda(1, \sym^2 f_i) \Lambda(1, \sym^2 \psi)}}
    \end{displaymath}
    and
    \begin{displaymath}
        \inner{\psi_q, \abs{F}^2} \ll \frac{1}{\sqrt q} \cdot \sqrt \frac{\Lambda(1/2, \psi \times F \times F)}{\Lambda(1, \sym^2 \psi)\Lambda(1, \sym^2 F)^2}.
    \end{displaymath}
    Here, we simply bounded $(1-\tilde{\lambda}_\psi(q)^2)^{-1/2} \ll 1$ and $\tilde{\lambda}_\psi(q), \tilde{\lambda}_{f_1}(q) \ll q^\theta$, using the assumption that $f_1$ is non-constant.
    Looking towards applying Lemma~\ref{lem:stirling}, we compute that
    \begin{equation}\label{eq:pmtpmt1}
        \sum_{\{\pm \}^2} |t_{\psi}+\pm t_{f_1}+\pm t_{f_2}| -2(t_{\psi}+t_{f_1}+t_{f_2}) \geq  2\max(t_\psi-t_{f_1}-t_{f_2},0).
    \end{equation}
    and
    \begin{equation}\label{eq:pmtpmt2}2t_\psi+|2t_F+t_\psi|+|2t_F-t_\psi|-2t_\psi-4t_F =2\max( t_\psi-2t_F,0)\geq 0.
    \end{equation}
    Using also a theorem of Hoffstein--Lockhart, as in the proof of Proposition~\ref{prop:que-level-glh}, we have that
    \begin{multline*}
        \inner{f_1 \cdot (l_qf_2), \psi_q} \inner{\psi_q, \abs{F}^2} \\ \ll q^{\theta - 1/2 + \varepsilon} e^{- \frac{\pi}{2}\max(t_\psi -t_{f_1}-t_{f_2},0)} \sqrt{L(1/2, f_1 \times f_2 \times \psi)L(1/2, \psi \times F \times F)}
    \end{multline*}

    The Generalized Lindelöf Hypothesis implies now that
    \begin{displaymath}
        \sum_{\psi \in \Bc(Y_1)} \inner{f_1 \cdot (l_q f_2), \psi_q }_q \cdot \inner{\psi_q, \abs{F}^2}_q \ll q^{\theta - 1/2 + \varepsilon} \sum_{\psi \in \Bc(Y_1)} e^{- \frac{\pi}{2}\max(t_\psi -t_{f_1}-t_{f_2},0)}.
    \end{displaymath}
    The contribution in the latter sum from $t_\psi<t_{f_1}+t_{f_2}$ is bounded by $O_B((t_{f_1}+t_{f_2})^2)$ by the Weyl law and the tail $t_\psi > t_{f_1}+t_{f_2}$ is uniformly bounded in terms of $\vol(Y_1) \asymp \disc(B)$ by the Weyl law and a dyadic decomposition.
\end{proof}

\subsection{Equidistribution in stages}\label{sec:equid-stages}
We consider now the case where $t_F$ goes to infinity fast enough in terms of $q$.
The theory of quantum chaos suggests that the mass of $F$ should equidistribute along $Y_q$.
Together with the equidistribution of the Hecke correspondence, we expect that proving our conjecture would require weaker hypotheses in this regime.
Indeed, we describe in this section how the Generalized Lindelöf Hypothesis is enough.

\begin{cor}
 \label{cor:bound-on-triple-prod} 
    For $L^2$-normalized Hecke--Maaß cuspidal newforms $F, \varphi$ on $Y_q$ and $f_1,f_2$ Hecke--Maaß cuspforms on $Y_1$, it holds that
    \begin{align*}
       & \inner{f_1 \cdot (l_q f_2), \varphi}_q \cdot \inner{\varphi, \abs{F}^2}_q \\
       & \ll_{f_1, f_2} e^{-\pi/2 \max(t_\varphi-t_{f_1}-t_{f_2} ,0)}(\abs{2t_F+t_\varphi}\cdot \abs{2t_F-t_\varphi} + 1)^{-1/4}  \\ 
       & \cdot \frac{\sqrt{L(1/2,F\times F\times \varphi)L(1/2,f_1\times f_2\times \varphi)}}{q L(1,\sym^2 \varphi) L(1,\sym^2 F)},
    \end{align*}
    with polynomial dependence on $t_{f_1},t_{f_2}$.
\end{cor}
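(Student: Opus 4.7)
My plan is to apply the Watson--Ichino formula to both inner products separately and then estimate the resulting archimedean $L$-factors via Lemma~\ref{lem:stirling}. This is essentially a direct calculation.

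First I would apply Proposition~\ref{prop:watson-ichino}. To $\inner{f_1 \cdot (l_q f_2), \varphi}_q$, the second assertion of that proposition is tailored exactly for this situation (with $f_1, f_2$ on $Y_1$ and $\varphi$ a newform on $Y_q$), yielding
\begin{displaymath}
    |\inner{f_1 \cdot (l_q f_2), \varphi}_q|^2 \ll \frac{1}{q} \cdot \frac{\Lambda(1/2, f_1 \times f_2 \times \varphi)}{\Lambda(1, \sym^2 f_1) \Lambda(1, \sym^2 f_2) \Lambda(1, \sym^2 \varphi)}.
\end{displaymath}
To $\inner{\varphi, |F|^2}_q$, I use the first assertion with all three forms on $Y_q$ (so $C_q \asymp q^{-1}$), obtaining
\begin{displaymath}
    |\inner{\varphi, |F|^2}_q|^2 \ll \frac{1}{q} \cdot \frac{\Lambda(1/2, F \times F \times \varphi)}{\Lambda(1, \sym^2 F)^2 \Lambda(1, \sym^2 \varphi)}.
\end{displaymath}

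Multiplying these bounds, taking the square root and splitting each completed $L$-function as $\Lambda = L_\infty \cdot L$ produces the factor $q^{-1}$, the quotient $L(1, \sym^2 F)^{-1} L(1, \sym^2 \varphi)^{-1}$, the desired square root $\sqrt{L(1/2, f_1 \times f_2 \times \varphi) \, L(1/2, F \times F \times \varphi)}$, an archimedean ratio to be estimated, and the nuisance factors $L(1, \sym^2 f_i)^{-1/2}$, which are absorbed into the polynomial dependence on $t_{f_i}$ via the theorem of Hoffstein--Lockhart.

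The remaining work is bookkeeping for the archimedean ratios through Lemma~\ref{lem:stirling}. For the triple $(f_1, f_2, \varphi)$ the computation in \eqref{eq:pmtpmt1} shows that the exponential factor contributes $\exp\!\bigl(-\tfrac{\pi}{2}\cdot 2\max(t_\varphi - t_{f_1} - t_{f_2}, 0)\bigr)$, and the accompanying polynomial factor in $t_{f_1}, t_{f_2}, t_\varphi$ is absorbed into the implied constant's polynomial dependence on $t_{f_1}, t_{f_2}$. For the triple $(F, F, \varphi)$, evaluating the sum $\sum_{\epsilon_1,\epsilon_2 \in \{\pm 1\}} |t_F + \epsilon_1 t_F + \epsilon_2 t_\varphi|$ yields the four terms $|2t_F + t_\varphi|, |2t_F - t_\varphi|, t_\varphi, t_\varphi$, whose product gives the polynomial factor $(|2t_F + t_\varphi| + 1)^{-1/2}(|2t_F - t_\varphi| + 1)^{-1/2}(t_\varphi + 1)^{-1}$. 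Together with the overall square root from Watson--Ichino, this yields exactly the factor $(|2t_F + t_\varphi|\cdot |2t_F - t_\varphi| + 1)^{-1/4}$ appearing in the statement (the extra $(t_\varphi+1)^{-1/2}$ and the nonnegative exponential $\exp(-\pi\max(t_\varphi-2t_F,0))$ produced by \eqref{eq:pmtpmt2} are simply discarded). There is no genuine obstacle in the argument; the only delicate point is tracking the archimedean decomposition carefully so that the half- and quarter-powers of the various terms line up as in the stated bound.
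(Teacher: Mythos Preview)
Your proposal is correct and follows essentially the same approach as the paper: apply the Watson--Ichino formula (Proposition~\ref{prop:watson-ichino}) to each inner product, bound the archimedean ratios via Lemma~\ref{lem:stirling} together with the inequalities \eqref{eq:pmtpmt1} and \eqref{eq:pmtpmt2}, and absorb the $L(1,\sym^2 f_i)$ factors into the polynomial dependence on $t_{f_i}$. Your bookkeeping of the exponents is accurate; the only minor imprecision is calling the passage from $(|2t_F+t_\varphi|+1)^{-1/4}(|2t_F-t_\varphi|+1)^{-1/4}$ to $(|2t_F+t_\varphi|\cdot|2t_F-t_\varphi|+1)^{-1/4}$ ``exact'' when it is really the elementary inequality $(a+1)(b+1)\geq ab+1$, but this does not affect correctness.
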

\begin{proof}
    This follows from the Watson--Ichino formula in Proposition~\ref{prop:watson-ichino} together with Lemma~\ref{lem:stirling} and the elementary inequalities (\ref{eq:pmtpmt1}) and (\ref{eq:pmtpmt2}).
\end{proof}

\begin{prop} \label{prop:in-stages}
    Under GLH, it holds that 
    \begin{align*}
        \langle |F|^2,f_1(q.f_2)\rangle_q\ll_{ \varepsilon,f_i} (q^{\theta + \varepsilon}+q^\varepsilon (t_F+1)^{-1/2+\varepsilon}), 
    \end{align*}
    with polynomial dependence $t_{f_1}$ and $t_{f_2}$ (with exponents allowed to depend on $\varepsilon$) and on $\varepsilon>0$.
    
    In particular, for any $\varepsilon>0$, Conjecture~\ref{conj:mixingQUE} holds for $t_F \geq q^\varepsilon$ and $q \rightarrow \infty$.
\end{prop}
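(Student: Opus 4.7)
The strategy is to spectrally expand over a Hecke eigenbasis of $Y_q$ and treat the constant, old-form, and newform contributions separately. Starting from \eqref{eq:spectral-expansion}, the constant term $\varphi = 1$ yields $\langle f_1 \cdot (l_q f_2), 1\rangle_q = \tilde{\lambda}_{f_2}(q) \langle f_1, \bar{f}_2\rangle_1 = \delta_{f_1=f_2} \tilde{\lambda}_{f_2}(q)$ by Lemma~\ref{lemma:hecke-corr-op}, which is bounded by $q^\theta$ via the Kim--Sarnak bound. The old-form contribution is already controlled by Lemma~\ref{lemma:old-spectrum-negligible}, giving $O_{f_i}(q^{\theta+\varepsilon})$ under GLH. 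Together, these two pieces produce the $q^{\theta+\varepsilon}$ term of the claimed bound.

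The main task is then to bound the newform contribution termwise using Corollary~\ref{cor:bound-on-triple-prod}. Under GLH, each central $L$-value appearing in the numerator is $\ll q^\varepsilon (1+t_F)^\varepsilon (1+t_\varphi)^\varepsilon$, since the analytic conductors are polynomially controlled by $q$, $1+t_F$ and $1+t_\varphi$. The Hoffstein--Lockhart bounds give $L(1,\sym^2 \varphi)^{-1}, L(1, \sym^2 F)^{-1} \ll q^\varepsilon$. The archimedean exponential factor $e^{-\frac{\pi}{2}\max(t_\varphi-t_{f_1}-t_{f_2},0)}$ restricts the effective summation range to $t_\varphi \leq t_{f_1}+t_{f_2} + O_\varepsilon(\log q)$, with exponentially decaying contributions beyond.

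On this effective range, if $t_F$ is substantially larger than $t_{f_1}+t_{f_2}$ (so that both $|2t_F \pm t_\varphi|\asymp t_F$), then the polynomial archimedean factor $(|2t_F+t_\varphi|\cdot|2t_F-t_\varphi|+1)^{-1/4}$ is $\ll (1+t_F)^{-1/2}$; otherwise the trivial bound $\leq 1$ suffices and the desired $(1+t_F)^{-1/2+\varepsilon}$ is absorbed into the implied constant's polynomial dependence on the $t_{f_i}$. Combining these estimates with the $q^{-1}$ prefactor from Corollary~\ref{cor:bound-on-triple-prod} and Weyl's law on $Y_q$, which yields $\#\{\varphi \in \Bc_0(Y_q) : t_\varphi \leq T\} \ll q T^2$, bounds the newform contribution by $\ll_{f_i,\varepsilon} q^\varepsilon(1+t_F)^{-1/2+\varepsilon}$, which is the second term in the statement. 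The application to Conjecture~\ref{conj:mixingQUE} for $t_F \geq q^\varepsilon$ is then immediate from Weyl's criterion (Lemma~\ref{lem:Weyl}), using $\theta<0$. The most delicate bookkeeping, though not a genuine obstacle once GLH is assumed, lies in balancing the archimedean decay in $t_F$ against the Weyl count: this is precisely the quantitative manifestation of equidistribution \emph{in stages} described in the introduction, and the range $t_F \asymp 1$ — where this mechanism fails — is exactly where the finer GRH-based fractional moment analysis of later sections becomes necessary.
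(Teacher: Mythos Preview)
Your proposal is correct and follows essentially the same route as the paper: spectral expansion, old-form contribution via Lemma~\ref{lemma:old-spectrum-negligible}, and termwise bounding of the newform contribution via Corollary~\ref{cor:bound-on-triple-prod} under GLH, with the exponential archimedean decay truncating the sum in $t_\varphi$ and Weyl's law controlling the remaining count. The only cosmetic difference is that the paper first uses the trivial sup-norm bound \eqref{eq:supnormL2} to reduce to $t_{f_1}+t_{f_2}<q^{\varepsilon}$ (thereby avoiding your case split on the relative sizes of $t_F$ and $t_{f_i}$), and it does not separate out the constant term explicitly since $\varphi=1$ is already absorbed into the old-form lemma.
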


\begin{proof}
We start with the trivial estimate
\begin{equation}\label{eq:supnormL2}
    \inner{f_1 \cdot (l_q f_2), \abs{F}^2}_q\ll (t_{f_1}+1)^{100}(t_{f_2}+1)^{100} \norm{F}^2, 
\end{equation}
using sup-norm bounds for $f_1,f_2$ \cite{iwaniec-sarnak}. 
Since $F$ is $L^2$-normalized and the implied constant is allowed to depend polynomially on $t_{f_1},t_{f_2}$, we may assume that $t_{f_1}+t_{f_2}<q^{\varepsilon}$.

We apply the spectral expansion \eqref{eq:spectral-expansion} where, by Lemma~\ref{lemma:old-spectrum-negligible}, we may restrict our attention to the newforms $\varphi \in \Bc_{\mathrm{new}}(Y_q)$.
    Under GLH, Corollary \ref{cor:bound-on-triple-prod} gives the bound
    \begin{multline*}
        \sum_{\varphi \in \Bc_{\mathrm{new}}(Y_q)} \inner{f_1 \cdot (l_q f_2), \varphi}_q \cdot \inner{\varphi, \abs{F}^2}_q \\ 
        \ll  q^{-1 + \varepsilon} \sum_{\varphi \in \Bc_{\mathrm{new}}(Y_q)} (\abs{2t_F+t_\varphi}+1)^{-1/4+\varepsilon}( \abs{2t_F-t_\varphi} + 1)^{-1/4+\varepsilon} e^{-\frac{\pi}{2}\max( t_\varphi-t_{f_1}-t_{f_2},0)}.
    \end{multline*}
    The contribution in the sum from  $t_\varphi>2(t_1+t_2)$ is bounded by 
    \begin{equation*}
      \sum_{\varphi \in \Bc_{\mathrm{new}}(Y_q)} (\abs{2t_F+t_\varphi}+1)^{-1/4+\varepsilon}( \abs{2t_F-t_\varphi} + 1)^{-1/4+\varepsilon} e^{-\frac{\pi}{4} t_\varphi},  
    \end{equation*}
    which is admissible by using a dyadic decomposition and the Weyl law. The contribution  from  $t_\varphi\leq 2(t_{f_1}+t_{f_2})$ is bounded by the Weyl law by
    \begin{equation*}
       \ll (2t_F+1)^{-1/2+\varepsilon} q (t_{f_1}+t_{f_2})^2\ll_{f_1, f_2} q (2t_F+1)^{-1/2+\varepsilon},  
    \end{equation*}
    which is also admissible.
\end{proof}
\begin{rem}
    More generally, if one assumes a subconvexity bound of the shape 
    \begin{equation*}
        L(1/2,F\otimes F\otimes \varphi)\ll q^A (t_F+1)^{1-\delta},
    \end{equation*}
    for $A>0$ and $0<\delta<1$, then Conjecture~\ref{conj:mixingQUE} holds in the regime $t_F\geq q^{\delta^{-1}A+\kappa}$ for any fixed $\kappa>0$.
\end{rem}
\begin{rem}\label{rem:t1t2}
    Note that by the bound (\ref{eq:supnormL2}), since we allow for polynomial dependence on $t_{f_1},t_{f_2}$, for any $\varepsilon>0$ we may throughout assume that $t_{f_1},t_{f_2} \leq q^\varepsilon$, by possibly changing the exponents.   
\end{rem}
\subsection{Reduction to a fractional moment}
Lemma~\ref{lemma:old-spectrum-negligible} reduces the problem to dealing with the terms in the spectral expansion \eqref{eq:spectral-expansion} coming from newforms $\Bc_\mathrm{new}(Y_q)$.
We again apply the explicit Watson--Ichino formula, Proposition~\ref{prop:watson-ichino}, to translate to a fractional moment problem, which we phrase in terms of $L$-functions for newforms on $Y_0(qD)$ using the Jacquet--Langlands correspondence.
This prepares the ground for using the Kuznetsov formula.

More precisely, let 
\begin{equation*}\mathcal{L}(\varphi):= |L(1/2,F\otimes F\otimes \varphi)L(1/2,f_1\otimes f_2\otimes \varphi)|,\end{equation*}
and 
\begin{equation}\label{eq:h(t)}
    h(t):= \left(\cosh \frac{\pi t}{ 2(t_{f_1}+t_{f_2}+3)}\right)^{-1},
\end{equation}
which satisfies that $h(t)>0$ for $t\in \R\cup i[-1/2,1/2]$.
Applying Corollary \ref{cor:bound-on-triple-prod} together with the elementary inequalities,
\begin{equation*}
    \max(t-r,0)> \frac{t}{r+3}-2,\quad e^{-|t|}\leq (\cosh t)^{-1},\quad t,r>0,
\end{equation*}
for tempered $\varphi$, and 
\begin{equation*}
  \left(\cosh \frac{i\pi s}{6}\right)^{-1}=\left(\cos \frac{\pi s}{6}\right)^{-1}\geq 1,\quad s\in \left[-\frac{1}{2},\frac{1}{2}\right],    
\end{equation*}
for non-tempered $\varphi$,
 as well as the Jacquet--Langlands correspondence, we arrive at
\begin{align*}
    &\sum_{\varphi \in \Bc_\mathrm{new}(Y_q)}\langle |F|^2,\varphi\rangle_q \langle \varphi,f_1 \cdot (l_qf_2)\rangle_q\\
    &\ll_{f_1,f_2} \frac{q^{-1}}{L(1,\sym^2 F)}\sum_{\varphi \in \Bc_\mathrm{new}(Y_0(qD))} \frac{h(is_\varphi)}{L(1,\sym^2 \varphi)} \mathcal{L}(\varphi)^{1/2},     
\end{align*}
with polynomial dependence on $t_{f_1},t_{f_2}$.
The main technical result of the paper is the following:
\begin{theorem}\label{thm:mainestimate}
    Assume GRH. Then it holds for any $\varepsilon>0$ that
    \begin{equation*}
        \sum_{\varphi\in \Bc_{\mathrm{new}}(Y_0(qD))}\frac{h(i s_\varphi)}{L(1,\sym^2 \varphi)} \mathcal{L}(\varphi)^{1/2}\ll_{\varepsilon,f_i}\begin{cases}
            q (\log q)^{-3/8+\varepsilon},& f_1\neq f_2\\
            q (\log q)^{-1/4+\varepsilon},& f_1= f_2
        \end{cases},
    \end{equation*}
    where the implied constant depends polynomially on  $t_{f_1},t_{f_2}$. 
\end{theorem}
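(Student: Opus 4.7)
The plan is to carry out the Soundararajan--Lester--Radziwi\l\l{} blueprint under GRH, adapted for the higher-degree $L$-functions appearing here. Under GRH, I would first replace $\tfrac12\log\mathcal{L}(\varphi)$ by a short Dirichlet polynomial $P_x(\varphi)$ supported on primes $p\leq x$, where $x=x(q)$ is a suitable power of $\log q$. The inequality $\mathcal{L}(\varphi)^{1/2}\leq e^{P_x(\varphi)+O(1)}$, valid off an exceptional set of $\varphi$, then reduces the estimate to an exponential moment of $P_x(\varphi)$, averaged over the newform basis with the spectral weight $\omega_\varphi:=h(is_\varphi)/L(1,\sym^2\varphi)$. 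These moments are then computed via the Petersson/Kuznetsov trace formula on $Y_0(qD)$.

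\textbf{Mean and variance.} Inspecting the Euler products of $L(s,F\otimes F\otimes\varphi)$ and $L(s,f_1\otimes f_2\otimes\varphi)$ from Section~\ref{sec:l-functions}, the leading part of $P_x(\varphi)$ is
\begin{equation*}
\Re\sum_{p\leq x} \frac{(\lambda_F(p)^2+\lambda_{f_1}(p)\lambda_{f_2}(p))\lambda_\varphi(p)}{p^{1/2}}\cdot p^{-1/\log x}\frac{\log(x/p)}{\log x},
\end{equation*}
together with a prime-square tail encoding the poles of the relevant symmetric-square $L$-functions. Applying the Petersson/Kuznetsov formula, together with Lemma~\ref{lem:variance-coeffs} and the holomorphy and extended Selberg class membership of the degree $9$--$12$ Rankin--Selberg convolutions from Lemmas~\ref{lem:sym2sym4} and~\ref{lem:automorphyRankinSelberg}, one obtains a spectral mean $\mu_q\asymp -c\log\log q$ and a centred variance $V\log\log q$, where $V$ is strictly larger in the case $f_1=f_2$. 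A two-dimensional Gaussian heuristic with this mean and variance predicts precisely the exponents $-3/8$ and $-1/4$; the dependence between the two factors of $\mathcal{L}(\varphi)$ when $f_1=f_2$ is compensated by a correspondingly more negative mean, as noted in the introduction.

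\textbf{Change of variables and the bulk range.} Following the new simplification outlined in the introduction, I would change variables by $(1-\varepsilon)\mu_q$ rather than $\mu_q$, writing $\mathcal{L}(\varphi)^{1/2}=e^{(1-\varepsilon)\mu_q}\cdot\mathcal{L}(\varphi)^{1/2}e^{-(1-\varepsilon)\mu_q}$. The benefit is that one never needs a second-moment estimate: the $(1-\varepsilon)$-shift exploits the negativity of the prime-square contribution in $P_x$ (stemming from $L(s,\sym^2 F\otimes\varphi)$, for which no positivity argument is available as in \cite{Blomer2024}) to absorb the unremoved Soundararajan weights $p^{-1/\log x}\log(x/p)/\log x$. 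The newforms are then partitioned into a \emph{bulk} range where $P_x(\varphi)-\mu_q$ is bounded by $V^{1/2}(\log\log q)^{1/2+\varepsilon}$, and a \emph{large} range. In the bulk, the Soundararajan upper bound applied pointwise, combined with the prefactor $e^{(1-\varepsilon)\mu_q}$, yields the expected Gaussian main term and the claimed saving.

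\textbf{Large values and main obstacle.} For the large range, I would use Markov's inequality combined with a high-moment estimate
\begin{equation*}
\sum_\varphi \omega_\varphi\bigl|P_x(\varphi)-\mu_q\bigr|^{2k}\ll q\cdot(Vk\log\log q)^k,
\end{equation*}
valid for $k\ll\log q/\log x$, obtained by expanding $|P_x-\mu_q|^{2k}$ as a Dirichlet polynomial of length at most $x^{2k}\ll q^{o(1)}$ and invoking the trace formula: for these lengths the Kloosterman geometric side vanishes, and the diagonal evaluates via Lemmas~\ref{lem:variance-coeffs}, \ref{lem:sym2sym4} and~\ref{lem:automorphyRankinSelberg}, with combinatorial control from Lemma~\ref{lem:Heckerelations}. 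The hardest point, as emphasised in the introduction, is the absence of the Generalized Ramanujan Conjecture: one must show that the relevant Rankin--Selberg $L$-functions (up to degree $12$) stay within the admissible range of the extended Selberg class using the Kim--Sarnak bound $7/64<1/8$, and simultaneously that the larger value of $x$ forced by the $(1-\varepsilon)$-shift trick remains compatible with the condition $x^{2k}\ll q^{o(1)}$ needed to discard the Kloosterman terms.
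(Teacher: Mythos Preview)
Your proposal is correct and follows the paper's approach closely, including the key innovations: the $(1-\varepsilon)\mu_q$ shift in place of $\mu_q$, the exploitation of negativity of the prime-square term, and the reliance on the Kim--Sarnak bound $7/64<1/8$ to stay within the extended Selberg class. Two technical points you gloss over but that the paper must handle explicitly are: (i) in the bulk range the optimal $k\asymp V^2/\var_q$ violates $x^k\ll q$ for the chosen $x$, forcing a further splitting of the prime range into $p<z$ and $z\leq p<x$ with $z=x^{1/\Delta^2}$; and (ii) without GRC the Chandee inequality cannot be truncated at $p^2$, so terms $\mathcal{S}_{n/2}(\varphi)$ involving $\lambda_\varphi(p^d)$ for $2\leq d\leq 5$ survive and require a separate (variant) high-moment estimate.
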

In view of Weyl's criterion as in Lemma~\ref{lem:Weyl} and standard bounds for the symmetric square $L$-function at $s=1$ under GRH (see Corollary \ref{cor:sym2at1} below) this implies Theorem~\ref{thm:main-intro}. The rest of the paper is devoted to the proof of Theorem~\ref{thm:mainestimate}.  

\section{A random model for fractional moments}\label{sec:heuristics} 
In this section we explain a heuristic for calculating fractional moments of $L$-functions and how to apply it in the setting of Theorem~\ref{thm:mainestimate}. See also \cite[Sec.~5]{Blomer2024} and \cite[Sec.~5.1]{lester-radz}. 

For any reasonable family $\mathcal{F}$ of $L$-functions one should expect the analogue of Selberg's central limit theorem to hold.
This says that the numbers $\{\log |L(1/2,\pi)|: \pi \in \mathcal{F}(X)\}$  are asymptotically normally distributed with mean and variance of size $\asymp \log\log |\mathcal{F}(X)|$, where  $\mathcal{F}(X)=\{\pi \in \mathcal{F}: \mathbf{c}(\pi)\leq X\}$. 
We will now present a random model that explains this and that allows one to calculate the mean and variance. 
We will apply this in the case of the family
$$\mathcal{F}(q)=\{\pi_0\otimes \varphi: \varphi\in \Bc_\mathrm{new}(q),t_\varphi\leq 100\},\quad \pi_0=F\otimes F\boxplus f_1\otimes f_2,$$ 
where $F$ has level $q$ and spectral parameter $t_F\leq q^{100}$. 
This means that 
$L(1/2,\pi_0\otimes \varphi)=L(1/2,F\otimes F\otimes \varphi)L(1/2,f_1\otimes f_2\otimes \varphi)$.

The starting point for the random model is the assumption that we can approximate the central value by a short Dirichlet polynomial.
That is, after Taylor expanding the logarithm, we have
\begin{align*}
    \log L(1/2,f_1\otimes f_2\otimes f_3)=&\sum_{p\leq x} \frac{\lambda_{f_1}(p)\lambda_{f_2}(p)\lambda_{f_3}(p)}{p^{1/2}}\\
    &+\frac{1}{2}\sum_{p^2\leq x} \frac{(\lambda_{f_1}(p^2)-1)(\lambda_{f_2}(p^2)-1)(\lambda_{f_3}(p^2)-1)}{p}+(\text{bounded}),
\end{align*}
where $x=Q^\varepsilon$ is a small power of the analytic conductor $Q$ of $f_1\otimes f_2\otimes f_3 $. Here we are using the Ramanujan conjecture to bound the contribution from higher powers of $p$. 
Assume for the moment that $f_1,f_2,f_3$ are pairwise distinct, with trivial central characters and square-free levels.
We then get by expanding the product that
\begin{align*}
    &\sum_{p^
    2\leq x} \frac{(\lambda_{f_1}(p^2)-1)(\lambda_{f_2}(p^2)-1)(\lambda_{f_3}(p^2)-1)}{p}-\\
    &=-\sum_{p^
    2\leq x} \frac{1}{p}+\sum_{\pi \in \{\sym^2 f_1,\ldots, \sym^2 f_1\otimes \sym^2 f_2\otimes \sym^2 f_3  \}}\pm \sum_{p^
    2\leq x} \frac{\lambda_\pi(p)}{p}.
\end{align*}
The terms in the last sum are all heuristically equal to the logarithm of a holomorphic $L$-function at $s=1$ up to a bounded error.
This is true under GRH by Corollary \ref{cor:boundsL(1)}, using here the assumptions on $f_1,f_2,f_3$, as well as appropriate instances of functoriality (see Section \ref{sec:l-functions}). 
By Mertens' second theorem we arrive at
\begin{align*}
    \log L(1/2,f_1\otimes f_2\otimes f_3)= -\frac{1}{2}\log \log Q + \sum_{p\leq x} \frac{\lambda_{f_1}(p)\lambda_{f_2}(p)\lambda_{f_3}(p)}{p^{1/2}}\\
    + \log \left\{\text{holomorphic $L$-functions at $s=1$}\right\}+(\text{bounded}).
\end{align*}
Assuming GRH and the Ramanujan conjecture, we know that the contribution from the $L$-functions at $s=1$ is of size $\log\log\log Q$ and is thus negligible. 
When $f_1=f_2$ a similar calculation yields the same expression but with $-\log\log Q$ in place of $-\frac{1}{2}\log \log Q$. This way we arrive at the approximation
\begin{equation}\label{eq:randommodel}
  \log L(1/2,\pi_0\otimes \varphi) \approx  \mu_q+\sum_{p\leq x} \frac{\left( \lambda_F(p)^2+\lambda_{f_1}(p)\lambda_{f_2}(p) \right) \lambda_{\varphi}(p)}{p^{1/2}},\quad x=q^\varepsilon
\end{equation}
where $\mu_q=-2\log\log q $ or $\mu_q = -\frac{3}{2}\log\log q$, according to whether $f_1=f_2$ or not.

Now, for each prime $p\leq x$, we can think of 
$$\varphi\mapsto \frac{\lambda_{\varphi}(p)}{p^{1/2}},$$ 
as corresponding to independent random variables $X_p$ with mean zero and variance $p^{-1}$, since $|\lambda_{\varphi}(p)|^2$ is $1$ on average by an application of the Kuznetsov trace formula. 
In this case, we would expect by the central limit theorem that, for a sequence $b(p)$ of real numbers, $\sum_{p\leq x} b(p)X_p$ should be well approximated by a normal distribution with mean zero and variance $\sum_{p\leq x} b(p)^2p^{-1}$. 
Applying this to (\ref{eq:randommodel}) and using the Hecke relations as in Lemma~\ref{lem:variance-coeffs}, we arrive at the following value for the variance:
\begin{align}\label{eq:varrandommodel}
    \sum_{p\leq x} \frac{(\lambda_F(p)^2+\lambda_{f_1}(p)\lambda_{f_2}(p))^2}{p}&\approx  \begin{cases}
       3\log \log q,& f_1\neq f_2\\
       6\log \log q,& f_1=f_2.
    \end{cases}\\
 \nonumber   &+\log \left\{\text{holomorphic $L$-functions at $s=1$}\right\}.
\end{align}
Combining all of the above we conclude that the logarithms of the family of $L$-functions appearing in our fractional moment is well-modeled by a Gaußian with mean $-\frac{3}{2}\log\log q$, respectively $-2\log \log q$, and variance $3\log\log q$, respectively $6\log\log q$, according to whether $f_1=f_2$ or not. 

Finally, let $X\sim \mathcal{N}(\mu,\sigma^2)$ be a Gaußian random variable. Then a quick calculation shows that for any $t>0$,
\begin{equation}\label{eq:expnormal}\mathbb{E}(e^{tX})=e^{t\mu+\frac{1}{2}t^2\sigma^2}.\end{equation} 
Applying this to our Gaußian model above we would thus expect that
\begin{align*}
   \frac{1}{|F(q)|} \sum_{\varphi}\sqrt{L(1/2,F\otimes F\otimes \varphi)L(1/2,f_1\otimes f_2\otimes \varphi)}\approx \begin{cases}
       (\log q)^{-3/8},& f_1\neq f_2\\
       (\log q)^{-1/4},& f_1= f_2.
   \end{cases} 
\end{align*}
We will see that, under GRH, we can recover the upper bound up to a factor $(\log q)^\varepsilon$.

\section{Analytic and automorphic tools}

Write $D_B$ for the reduced discriminant $\disc(B)$ of the indefinite quaternion algebra $B$. 
We recall that the theory of Jacquet-Langlands gives a one-to-one correspondence between Hecke--Maaß forms on $Y_1$ and Hecke--Maaß newforms on $Y_0(D_B)$, the quotient of $\Hb$ by the standard Hecke congruence subgroup $\Gamma_0(D_B)$ (see \cite[Thm~8.18]{bergeron} or \cite{Strombergson01}).
Similarly, Hecke--Maaß newforms on $Y_q$ are in one-to-one correspondence with Hecke--Maaß newforms on $Y_0(qD_B)$.
Note that both $D_B$ and $qD_B$ are square-free. 
As such, we may import much of the analytic theory of automorphic forms for $\Gamma_0(N)$ with $N$ square-free.

\subsection{The Bruggeman--Kuznetsov formula} 
Let $h: \R\cup i(-1/2,1/2)\rightarrow \C$ be a test function that can be extended to the strip $|\Im(t)|<1/2+\delta$ for some $\delta>0$ satisfying the following properties:
\begin{enumerate}
    \item $h$ is even and holomorphic, 
    \item $h(t)\ll (1+|t|)^{-2-\delta}$ for $|\Im(t)|<1/2+\delta$.
\end{enumerate}
In particular, this is satisfied by $t\mapsto  \cosh(\pi t/r)^{-1}$ for $r\geq 2$. 
For a positive integer $N$, let $\Bc_\mathrm{new}(Y_0(N))$ be an orthonormal basis of newforms of level $N$.   
\begin{theorem}[{\cite[Theorem 9.3]{iwaniec-spectral}}] \label{thm:kuzn} 
    Let $N>0$ be a square-free integer. 
    Then, for $h$ as above and $m,n>0$ coprime to $N$, it holds that
    \begin{align*}
        &\sum_{LM=N}\sum_{\varphi\in \Bc_\mathrm{new}(Y_0(M))}\frac{h(i s_\varphi)}{L(1,\sym^2 \varphi)}\mathcal{W}_\varphi(L)  \lambda_\varphi(m)\lambda_\varphi(n)\\
        &+\frac{1}{2\pi}\int_\R  \frac{h(t)}{|\zeta(1+2it)|^2} \mathcal{W}_t(N)\lambda_t(m)\lambda_t(n) dt
        \\
        &=N\delta_{m=n} h^0 +\sum_{c\geq 1} \frac{S(m,n;Nc)}{c} h^+\left( \frac{4\pi \sqrt{mn}}{Nc}\right),
    \end{align*}
    where $\mathcal{W}_\varphi, \mathcal{W}_t: \mathbb{N}\rightarrow \R_{>0}$ are completely multiplicative positive functions, uniquely determined by
  \begin{align}
        \label{eq:Wvarphi}\mathcal{W}_\varphi(p):=\frac{p}{2}\left(1+\frac{\lambda_\varphi(p)^2}{p(1-(\lambda_\varphi(p^2)-1)p^{-1}+p^{-2})}\right),\\
        \label{eq:Wt}\mathcal{W}_t(p):=\frac{p}{2}\left(1+\frac{\lambda_t(p)^2}{p(1-p^{-1+2it})(1-p^{-1-2it})}\right),
    \end{align}
    and 
    \begin{align*}
        h^0:=\frac{1}{\pi}\int_\R h(t)\,  t\tanh \pi t\, dt,\\
        h^+(x):= \int_\R h(t)\frac{2it\,  J_{2it}(x)}{\cosh\pi t}\, dt
    \end{align*}
\end{theorem}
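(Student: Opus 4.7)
The plan is to deduce this result from two inputs: (i) the classical Bruggeman--Kuznetsov formula for the full cuspidal and continuous spectrum of $\Gamma_0(N)$, and (ii) an Atkin--Lehner decomposition of the basis into newforms and oldforms, together with the explicit computation of the local weights at primes $p \mid L$. Indeed, this is exactly the strategy followed in \cite[Ch.~9]{iwaniec-spectral}.

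First I would recall the unsieved Kuznetsov formula at level $N$, which takes the form
\begin{equation*}
    \sum_{\varphi \in \Bc(Y_0(N))} \frac{h(is_\varphi) \overline{\lambda_\varphi(m)} \lambda_\varphi(n)}{\norm{\varphi}^2} + \text{(Eisenstein)} = N \delta_{m=n} h^0 + \sum_{c\geq 1} \frac{S(m,n; Nc)}{c} h^+\!\left(\tfrac{4\pi\sqrt{mn}}{Nc}\right),
\end{equation*}
where $\Bc(Y_0(N))$ is any orthonormal basis of Maaß cuspforms including oldforms. This identity is classical and follows from the spectral expansion of inner products of Poincaré series, together with the Bruhat decomposition computation that produces the Kloosterman sum on the right-hand side. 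The factor $N$ on the delta term reflects the normalization of the hyperbolic measure relative to $Y_0(N)$.

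Next I would stratify $\Bc(Y_0(N))$ by Atkin--Lehner theory according to the minimal level $M \mid N$ at which each basis element is new. Given a newform $\varphi$ on $Y_0(M)$ with $L = N/M$, the space of associated oldforms at level $N$ is spanned by the translates $\varphi(dz)$ for $d \mid L$, and since $N$ (hence $L$) is square-free this space factors as a tensor product over primes $p \mid L$ of two-dimensional blocks spanned by $\varphi$ and $\varphi(pz)$. The orthonormalization of the basis therefore reduces to a local Gram--Schmidt computation at each such prime.

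The key computation is evaluating the resulting local weight explicitly. At a prime $p \mid L$, one computes $\inner{\varphi, \varphi(p\cdot)}$ in terms of $\lambda_\varphi(p)$, orthonormalizes the pair $\{\varphi, \varphi(p\cdot)\}$, and accumulates the total contribution of the two basis elements to $\sum_{\psi} \overline{\lambda_\psi(m)}\lambda_\psi(n)/\norm{\psi}^2$. Using the Hecke relation $\lambda_\varphi(p)^2 = \lambda_\varphi(p^2) + 1$ to simplify, the local factor collapses to the expression \eqref{eq:Wvarphi}, and an analogous calculation for the Eisenstein series yields \eqref{eq:Wt}; multiplicativity in $L$ is automatic from the tensor product decomposition. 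The main obstacle is the bookkeeping in this Gram--Schmidt step: the Hecke eigenvalue $\lambda_\varphi(p)$ for $p \mid L$ arises from the action of $T_p$ on the oldform at level $N$ rather than at the newform level, and one must carefully track the normalization of $\varphi(pz)$ (through its Petersson norm) to ensure that the final completely multiplicative function $\mathcal{W}_\varphi$ matches the stated formula. For $N$ non-squarefree additional local cases arise, which is precisely why the hypothesis is imposed.
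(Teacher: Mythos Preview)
Your proposal is correct and follows essentially the same approach as the paper: start from the classical Kuznetsov formula and sieve to newforms via the Atkin--Lehner decomposition, computing the local weight at each $p\mid L$. The paper's proof is simply more concise, citing \cite[Lemma~A.9]{HumKhan20} for the explicit newform-decomposed formula and then identifying the denominator in $\mathcal{W}_\varphi(p)$ as the local factor $L_p(1,\sym^2\varphi)^{-1}/(1-p^{-1})$ via \eqref{eq:localsym2}, rather than carrying out the Gram--Schmidt by hand.
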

\begin{proof}
This follows from the explicit formulas in \cite[Lemma A.9]{HumKhan20}, which simplifies significantly since $(mn,N)=1$, and that by (\ref{eq:localsym2}) the local symmetric square $L$-function at unramified finite places is given by
\begin{align*}
L_p(s,\sym^2\varphi )^{-1}&=(1-\alpha_{\varphi,p}^{2}p^{-s})(1-\alpha_{\varphi,p}^{-2} p^{-s})(1- p^{-s})\\
&=(1-(\lambda_\varphi(p^2)-1)p^{-s}+p^{-2s})(1-p^{-s}),\end{align*}
where $\alpha_{\varphi,p}^{\pm 1}$ denotes the Satake parameters of $\varphi$ at $p$. The fact that $\mathcal{W}_\varphi(p)$ is non-negative follows from the trivial bound $|\lambda_\varphi(p^2)|\leq p$. A similar argument applies to the continuous spectrum. 
\end{proof}
We will now calculate the transforms for the relevant test functions.
\begin{lemma}\label{lem:h0} 
  Let $r\geq 1$ and $h(t)=(\cosh (\pi t/r))^{-1}$. Then it holds that
    $$h^0\ll r^2.$$   
\end{lemma}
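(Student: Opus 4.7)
The plan is to directly bound the integral defining $h^0$ by exploiting that the weight $h(t) = (\cosh(\pi t/r))^{-1}$ has effective support on the interval $|t| \lesssim r$. More precisely, I would perform the rescaling $u = t/r$ to concentrate all $r$-dependence into an explicit prefactor.

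After substitution, the integral becomes
\begin{equation*}
    h^0 = \frac{r^2}{\pi} \int_{\R} \frac{u \tanh(\pi r u)}{\cosh(\pi u)} \, du.
\end{equation*}
Now I would use the trivial bound $|\tanh(\pi r u)| \leq 1$, valid for all $u \in \R$ and all $r \geq 1$, to dominate the integrand by $|u|/\cosh(\pi u)$. Since
\begin{equation*}
    \int_{\R} \frac{|u|}{\cosh(\pi u)} \, du
\end{equation*}
is a finite absolute constant (the integrand decays exponentially at infinity and is bounded near $u=0$), the claim $h^0 \ll r^2$ follows immediately.

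There is no real obstacle here: the factor $t \tanh(\pi t)$ grows only linearly, so the Gaussian-type decay of $1/\cosh(\pi t/r)$ on the scale $|t| \asymp r$ forces the integral to be of size $r \cdot r = r^2$. The rescaling argument simply makes this heuristic precise and uniform in $r \geq 1$.
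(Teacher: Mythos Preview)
Your proof is correct and follows essentially the same approach as the paper: both bound $|\tanh| \leq 1$ and exploit the exponential decay of $1/\cosh$ to reduce to an integral of size $r^2$. The only cosmetic difference is that you rescale first and then observe $\int_\R |u|/\cosh(\pi u)\,du < \infty$, whereas the paper bounds $(\cosh t)^{-1} \leq 2e^{-|t|}$ directly and computes $\int_\R |t|\,e^{-\pi|t|/r}\,dt \ll r^2$.
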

\begin{proof}
    By the trivial inequalities $|\tanh t|\leq 1$ and $(\cosh t)^{-1}\leq 2 e^{-|t|}$,
    \begin{align*}
      h^0=\frac{1}{\pi}\int_\R h(t)\,  t\tanh \pi t\, dt  \ll \int_{\R} |t| e^{-\pi |t|/r} dt\ll r^2, 
    \end{align*}
    as wanted.
\end{proof}
\begin{lemma}\label{lem:h+}
    Let $r\geq 3$ and $h(t)=(\cosh (\pi t/r))^{-1}$. Then it holds that
    $$h^+(x)= \frac{x}{\cos \frac{\pi}{2r}}+O(r^{1/2} x^2),\quad 0<x<2.$$
\end{lemma}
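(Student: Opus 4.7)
The plan is to evaluate $h^+(x)$ by shifting the contour of integration downward to $\Im t = -1$, picking up a residue at $t = -i/2$.

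First, I would observe that the integrand $h(t) \cdot 2it \, J_{2it}(x)/\cosh(\pi t)$ is meromorphic in $t$. Since $J_{2it}(x)$ is entire in $t$, and since the poles of $h(t) = 1/\cosh(\pi t/r)$ lie at $t = \pm i(2k+1)r/2$ (which, for $r \geq 3$, all satisfy $|\Im t| \geq 3/2$), the only singularity of the integrand in the strip $-3/2 < \Im t < 1/2$ is the simple pole at $t = -i/2$ coming from $1/\cosh(\pi t)$. Standard estimates (Stirling for the Gamma factors in the Bessel series, combined with the exponential decay of the two hyperbolic cosines) show that the integrand decays uniformly in the strip as $|\Re t| \to \infty$, justifying the contour shift.

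Applying the residue theorem yields
\[
    h^+(x) = \frac{2 J_1(x)}{\cos(\pi/(2r))} + R(x),
\]
where $R(x) = \int_{\Im t = -1} h(t) \cdot 2it \, J_{2it}(x)/\cosh(\pi t) \, dt$. The residue computation uses $\cosh(\pi t) \sim -i\pi(t+i/2)$ near $t = -i/2$, hence $\operatorname{Res}_{t=-i/2}(1/\cosh(\pi t)) = i/\pi$, together with $h(-i/2) = 1/\cos(\pi/(2r))$, the value $2it|_{t = -i/2} = 1$, and $J_{2it}(x)|_{t=-i/2} = J_1(x)$. The Taylor expansion $J_1(x) = x/2 - x^3/16 + \ldots$ then gives $2J_1(x)/\cos(\pi/(2r)) = x/\cos(\pi/(2r)) + O(x^3)$, the implied constant being absolute since $\cos(\pi/(2r)) \geq \cos(\pi/6) = \sqrt{3}/2$ for $r \geq 3$.

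The remaining task, and main technical step, is to bound $|R(x)|$. Parametrizing $t = \sigma - i$, one has $|\cosh(\pi t)| = \cosh(\pi\sigma)$ and $|h(\sigma - i)|$ is uniformly bounded in $r \geq 3$ (with decay $\ll e^{-\pi|\sigma|/r}$ once $|\sigma| \geq r$). The key input is the series bound $|J_{2+2i\sigma}(x)| \ll x^2/|\Gamma(3+2i\sigma)|$ valid for $0 < x < 2$, which follows from the defining power series together with the elementary inequality $|\Gamma(3+2i\sigma+k)| \geq (k+2)!\,|\Gamma(3+2i\sigma)|/2$. Combining this with the Stirling bound $|\Gamma(3+2i\sigma)| \gg (1+|\sigma|)^{5/2} e^{-\pi|\sigma|}$ produces an integrand of size $O(x^2 \min(1, |\sigma|^{-3/2}))$, which is absolutely integrable and yields $R(x) = O(x^2)$ uniformly in $r$. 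Putting everything together gives $h^+(x) = x/\cos(\pi/(2r)) + O(x^2)$ for $0 < x < 2$, which implies the stated bound. The principal subtlety lies in verifying the uniform decay required to justify the contour shift and tracking the uniform behavior of $J_{2it}(x)$ across the horizontal strip.
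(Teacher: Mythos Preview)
Your proof is correct and in fact yields the sharper bound $h^+(x) = x/\cos(\pi/(2r)) + O(x^2)$, uniformly in $r\geq 3$, which implies the stated lemma. The approach differs from the paper's in the order of operations: the paper first replaces $J_{2it}(x)$ by its leading Taylor term $(x/2)^{2it}/\Gamma(1+2it)$ on the line $\Im t = 0$, incurring an error $O(r^{1/2}x^2)$ from integrating the Bessel tail against $h(t)/\cosh(\pi t)$, and only then shifts the contour; you instead shift the full integrand first and estimate the Bessel function on the line $\Im t = -1$. Your ordering is advantageous because on $\Im t = -1$ the decay of $1/\cosh(\pi t)$ already cancels the growth $e^{\pi|\sigma|}$ from $1/|\Gamma(3+2i\sigma)|$, leaving a clean $(1+|\sigma|)^{-3/2}$ integrand with no residual dependence on $r$; the paper's ordering forces the mild decay $e^{-\pi|t|/r}$ of $h(t)$ to do work on the real line, which is where the $r^{1/2}$ appears. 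Both routes rest on the same residue at $t=-i/2$, and the paper's approach has the minor virtue that, after extracting the leading Bessel term, the shifted integral involves only elementary functions.
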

\begin{proof}
By the Taylor expansion of the $J$-Bessel function \cite[(B.28)]{iwaniec-spectral}, it holds that
\begin{align*}
    J_{2it}(x)= \sum_{j=0}^\infty \frac{(-1)^j\left(\frac{x}{2}\right)^{j+2it}}{j!\, \Gamma(j+1+2it)} =\frac{\left(\frac{x}{2}\right)^{2it}}{\Gamma(1+2it)}+O\left(x^2\sum_{j=1}^\infty \frac{1}{j!\, |\Gamma(j+1+2it)|} \right),
\end{align*}
for $x < 2$.
By the functional equation for the $\Gamma$-function and Stirling's formula we have for $j\geq1$ that
$$|\Gamma(j+1+2it)|=|(j+2it)\cdots (1+2it)| |\Gamma(1+2it)|\gg (|t|+1)^{3/2} e^{-\pi |t|}.$$
Inserting this into the definition of $h^+$ and using that $(\cosh \pi t)^{-1}(\cosh \pi t/r)^{-1}\ll e^{-\pi(1+1/r)|t|}$ we arrive at 
\begin{align*}
    h^+(x)&=-i\int_{(0)} \frac{2z\left(\frac{x}{2}\right)^{2z}}{\Gamma(1+2z)(\cos \pi z)(\cos \pi z/r)}\, dz+O\left(x^2\int_{\R} \frac{|t|}{e^{-\pi |t|}(|t|+1)^{3/2}e^{\pi (1+1/r)|t|}}\, dt  \right)\\
    &=-i\int_{(0)} \frac{2z\left(\frac{x}{2}\right)^{2z}}{\Gamma(1+2z)(\cos \pi z)(\cos \pi z/r)}\, dz+O\left( r^{1/2} x^2 \right). 
\end{align*}
Now we shift the contour to $\Re z=1$ picking up the residue at $z=1/2$ (using here the rapid decay as $\abs{\Im z} \rightarrow \infty$ and that $r\geq 3$ to ensure that $z=1/2$ is the unique pole). 
Note that, by the Taylor expansion of the cosine function around $z=\pi/2$, we have 
$$(\cos \pi z)^{-1}=\frac{1}{\pi}(z-1/2)^{-1}(1+O((z-1/2)^2))^{-1}=\frac{1}{\pi}(z-1/2)^{-1}+O((z-1/2)). $$
Thus we conclude that
\begin{align*}
    \mathop{\mathrm{res}}_{z=1/2} \frac{2z\left(\frac{x}{2}\right)^{2z}}{\Gamma(1+2z)(\cos \pi z)(\cos \pi z/r)}
    &= \frac{1}{\pi} \left(\frac{2z\left(\frac{x}{2}\right)^{2z}}{\Gamma(1+2z)\cos \pi z/r}\right) \Biggr\rvert_{z=1/2}\\
    &= \frac{x}{2\pi \cos\frac{\pi}{2r}}.
\end{align*}
By considerations as above, the contribution from the vertical line $\Re z=1$ is seen to be $O(x^2)$. 
Thus, we get the desired result by the residue theorem.
\end{proof}
\begin{cor}\label{cor:kuzn}
 Let $N>0$ be a square-free integer. Let $h(t)=(\cosh (\pi t/r))^{-1}$, $r\geq 3$.
 Let $m,n\geq 1$ be integers coprime to $N$ such that $mn< N^2/4\pi^2$. 
 Then it holds that 
    \begin{align*}
        &\sum_{LM=N}\sum_{\varphi\in \Bc_\mathrm{new}(Y_0(M))}\frac{h(i s_\varphi)}{L(1,\sym^2 \varphi)}\mathcal{W}_\varphi(L)  \lambda_\varphi(m)\lambda_\varphi(n)\\
        &+\frac{1}{4\pi}\int_\R  \frac{h(t)}{|\zeta(1+2it)|^2} \mathcal{W}_t(N)\lambda_t(m)\lambda_t(n) dt
        \\
        &=N\delta_{m=n} h^0 +O\left(r^{1/2}  \frac{\sqrt{mn}}{\sqrt{N}}\right).
    \end{align*}
\end{cor}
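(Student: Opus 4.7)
The plan is to apply the Bruggeman--Kuznetsov formula, Theorem \ref{thm:kuzn}, directly with the test function $h(t) = (\cosh(\pi t/r))^{-1}$. Since $r \geq 3 \geq 2$, this $h$ satisfies the required conditions (holomorphic and even on a wider strip, with the needed decay). The left-hand side of the corollary is then exactly the left-hand side produced by the theorem, so the task reduces to analyzing the geometric side
\begin{equation*}
    N \delta_{m=n} h^0 + \sum_{c \geq 1} \frac{S(m,n;Nc)}{c}\, h^+\!\left(\frac{4\pi \sqrt{mn}}{Nc}\right).
\end{equation*}
The diagonal term $N \delta_{m=n} h^0$ is transcribed unchanged into the statement.

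For the Kloosterman sum, I would first exploit the hypothesis $mn < N^2/(4\pi^2)$, which guarantees that $x := 4\pi\sqrt{mn}/(Nc)$ satisfies $x < 2/c \leq 2$ for every $c \geq 1$. In particular, $x < 2$ so Lemma \ref{lem:h+} is applicable uniformly in $c$, giving
\begin{equation*}
    h^+(x) = \frac{x}{\cos(\pi/(2r))} + O(r^{1/2} x^2) \ll r^{1/2} x,
\end{equation*}
since $1/\cos(\pi/(2r)) = O(1)$ for $r \geq 3$ and $r^{1/2} x^2 \leq 2 r^{1/2} x$ when $x < 2$.

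It then remains to combine this bound with Weil's estimate $|S(m,n;Nc)| \leq \tau(Nc)(m,n,Nc)^{1/2}(Nc)^{1/2}$. Inserting both into the Kloosterman sum yields
\begin{equation*}
    \sum_{c \geq 1} \frac{|S(m,n;Nc)|}{c}\, h^+\!\left(\frac{4\pi\sqrt{mn}}{Nc}\right) \ll \frac{r^{1/2} \sqrt{mn}}{\sqrt{N}} \sum_{c \geq 1} \frac{\tau(Nc)\,(m,n,Nc)^{1/2}}{c^{3/2}},
\end{equation*}
and the remaining sum over $c$ converges absolutely by standard divisor estimates (using $(m,n,Nc) \leq (m,n) \ll 1$ in the relevant regime), producing the claimed error term $O(r^{1/2}\sqrt{mn}/\sqrt{N})$.

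There is no serious obstacle here, this is essentially bookkeeping. The only point that requires minor care is checking that the convergence of the $c$-sum does not introduce divergent factors; this is why the $x^2$ piece of the expansion for $h^+$ is crucial, as it yields a summand decaying like $c^{-3/2}$ after combining with Weil's bound, rather than the borderline $c^{-1}$ one would get from using only a crude pointwise bound on $h^+$.
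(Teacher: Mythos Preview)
Your proposal is correct and follows essentially the same route as the paper: apply Theorem~\ref{thm:kuzn}, bound $h^+$ via Lemma~\ref{lem:h+} (using $mn < N^2/4\pi^2$ to ensure the argument is below $2$), and control the Kloosterman tail with Weil's bound so that the $c$-sum decays like $c^{-3/2}$. The paper's own proof is in fact terser than yours, simply writing $|S(m,n;Nc)| \ll \sqrt{Nc}$ and absorbing the divisor and gcd factors into the implied constant.
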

\begin{proof}
By combining Weil's bound (using here that $(mn,N)=1)$ with Theorem~\ref{thm:kuzn} and Lemma~\ref{lem:h+} (note that indeed $0<\frac{4\pi \sqrt{mn}}{N}<2$), we conclude that the left-hand side of the above equation equals
\begin{align*}
    N\delta_{m=n} h^0 +O\left(r^{1/2}  \sum_{c\geq 1} \frac{\sqrt{Nc}}{c} \frac{\sqrt{mn}}{Nc} \right).
\end{align*}
This yields the desired formula.
\end{proof}

\subsection{$L$-functions at $s=1$}
In this section we will collect some useful facts about bounds for $L$-function at $s=1$ under GRH. We note that, without assuming the Ramanujan conjecture, this is a very delicate question. 
\begin{lemma}\label{lem:L(1)=sum_p}
 Let $L(s,\pi)$ be a holomorphic $L$-function of degree $d$ in the extended Selberg class (not necessarily primitive) and assume GRH for $L(s, \pi)$. 
 Assume also that the Satake parameters satisfy $\abs{\alpha_\pi(p,j)} \leq p^{\delta}$ for some $0\leq \delta<1/2$. Then for every $\varepsilon>0$ and $x\geq 2$ we have
\begin{equation*}
    \sum_{p<x}\frac{\lambda_\pi(p)}{p}=\log L(1,\pi)+O_{\delta}\left(\log \mathbf{c}(\pi)\log \log \mathbf{c}(\pi)\frac{(\log x)^3}{x^{1/2}}\right).
\end{equation*}
In particular for $x\geq (\log \mathbf{c}(\pi))^{2+\varepsilon}$ it holds that
\begin{equation*}
    \label{eq:L(1)=sumO(1)}\sum_{p<x}\frac{\lambda_\pi(p)}{p}=\log L(1,\pi)+O_{\delta,\varepsilon}(1).
\end{equation*}
\end{lemma}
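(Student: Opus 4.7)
The plan is contour integration: express $\sum_{p<x}\lambda_\pi(p)/p$ via truncated Perron's formula applied to $\log L(s+1,\pi)$, then shift the contour across the critical line to capture $\log L(1,\pi)$ as a residue. For $\Re s>1$, the Euler product gives
\[
    \log L(s+1,\pi) = \sum_p\frac{\lambda_\pi(p)}{p^{s+1}} + Q(s+1),\qquad Q(s):=\sum_{p,\, k\geq 2}\frac{1}{k p^{ks}}\sum_{j=1}^d \alpha_\pi(p,j)^k.
\]
The Satake hypothesis $|\alpha_\pi(p,j)|\leq p^\delta$ with $\delta<1/2$ ensures $Q$ is holomorphic on $\Re s>1/2+\delta$ with $|Q(1)|\ll_\delta 1$. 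Applying Perron to the Dirichlet series of $\log L(s+1,\pi)$ and isolating the prime (i.e.\ $k=1$) contributions recovers $\sum_{p<x}\lambda_\pi(p)/p$ up to the additive constant $-Q(1)=O_\delta(1)$ and an absolutely convergent tail $O_\delta(x^{2\delta-1})$ from the $k\geq 2$ prime powers with $p^k<x$; both will be absorbed in the final error.

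Apply truncated Perron at height $T=\sqrt{x}$ on the line $\Re s=1/\log x$. Under GRH, $\log L(1+s,\pi)$ is holomorphic for $\Re s>-1/2$, allowing us to shift the contour to $\Re s=-1/2+1/\log x$, crossing only the simple pole at $s=0$ with residue $\log L(1,\pi)$. On the new line $|x^s|\ll x^{-1/2}$, and Littlewood's conditional estimate gives
\[
    \bigl|\log L\bigl(\tfrac12+\tfrac1{\log x}+it,\pi\bigr)\bigr|\ll \frac{\log\mathbf{c}(\pi,t)}{\log\log\mathbf{c}(\pi,t)}\cdot \log\log x,
\]
where $\mathbf{c}(\pi,t)\ll\mathbf{c}(\pi)(1+|t|)^d$. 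Integrating against $x^s/s$ along the vertical segment of length $2T$ and the two horizontal connectors, together with the Perron truncation errors and the $O_\delta(x^{2\delta-1})$ correction from prime powers, yields a total error comfortably dominated by $\log\mathbf{c}(\pi)\log\log\mathbf{c}(\pi)(\log x)^3/\sqrt{x}$; this also absorbs the constant $Q(1)$. The first display follows, and the ``in particular'' claim is immediate by choosing $x\geq (\log\mathbf{c}(\pi))^{2+\varepsilon}$ so that the error reduces to $O_{\delta,\varepsilon}(1)$.

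The principal technical input is the conditional Littlewood-type bound for $\log L$ at distance $1/\log x$ from the critical line, in the generality of the extended Selberg class. This follows from the Hadamard factorization of $L(s,\pi)$ together with the zero-counting estimate $N(T,\pi)\ll T\log\mathbf{c}(\pi,T)$ and GRH (which keeps the shifted contour zero-free), but uniformity in $\mathbf{c}(\pi)$ must be tracked carefully. A fully equivalent alternative, bypassing $\log L$ on the critical line, is to use the conditional prime number theorem $\vartheta_\pi(t):=\sum_{p<t}\lambda_\pi(p)\log p\ll \sqrt{t}\,(\log\mathbf{c}(\pi,t))^2$ and partial summation on the tail $\sum_{p\geq x}\lambda_\pi(p)/p$; this produces a bound of the shape $(\log\mathbf{c}(\pi,x))^2/(\sqrt{x}\log x)$ which is sharper when $x\gg\mathbf{c}(\pi)$ but not uniformly dominated by the stated error in the opposite regime, so a hybrid argument (or the Littlewood bound above) is needed to obtain exactly the shape in the statement.
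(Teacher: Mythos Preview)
Your Perron/contour-shifting argument with the conditional Littlewood bound near the critical line is the standard route, and the paper's own proof simply cites \cite[Lemma~5]{Blomer2024} for exactly this.

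One correction: your claim that the stated error ``also absorbs the constant $Q(1)$'' is false. For fixed $\pi$ the error $\log\mathbf{c}(\pi)\log\log\mathbf{c}(\pi)\,(\log x)^3/\sqrt{x}$ tends to $0$ as $x\to\infty$, whereas $Q(1)=\sum_{p,\,k\ge2}\tfrac{1}{kp^k}\sum_j\alpha_\pi(p,j)^k$ is a fixed nonzero constant; indeed under GRH one has $\lim_{x\to\infty}\sum_{p<x}\lambda_\pi(p)/p=\log L(1,\pi)-Q(1)$, not $\log L(1,\pi)$. So the first display in the lemma, read literally, is missing an additive $O_\delta(1)$. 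This is immaterial for the paper---only the second display with its $O_{\delta,\varepsilon}(1)$ is ever invoked---but your argument does not establish the first display as written, and no argument can.
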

\begin{proof}
    This follows directly from the statement and the proof of \cite[Lemma~5]{Blomer2024}.
\end{proof}

\begin{cor}\label{cor:boundsL(1)}
    Let $L(s,\pi_1),L(s,\pi_2)$ be two holomorphic $L$-functions in the extended Selberg class (not necessarily primitive). 
    Assume GRH for $L(s,\pi_i)$ and that the Satake parameters of $L(s,\pi_i)$ satisfy the bound in the previous lemma.
    Let $a,b,\alpha\geq 0$ and assume that
    \begin{equation*}
        a\lambda_{\pi_1}(p)-b\lambda_{\pi_2}(p)\leq\alpha, 
    \end{equation*}
    for all primes $p$, where $\lambda_{\pi_i}(n)$ denotes the Dirichlet coefficients of $L(s,\pi_i)$. 
    Then, it holds that
    \begin{align*}
    L(1,\pi_1)^a L(1,\pi_2)^{-b} \ll_{a,b,\alpha} (\log\log (\mathbf{c}(\pi_1)\mathbf{c}(\pi_2)))^{\alpha}
    \end{align*}
    Similarly, if 
    \begin{equation*}
        a\lambda_{\pi_1}(p)-b\lambda_{\pi_2}(p) \geq - \alpha, 
    \end{equation*}
    then
    \begin{align*}
    L(1,\pi_1)^a L(1,\pi_2)^{-b} \gg_{a,b,\alpha} (\log\log (\mathbf{c}(\pi_1)\mathbf{c}(\pi_2)))^{-\alpha}.
    \end{align*}
\end{cor}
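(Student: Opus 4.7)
The plan is to reduce both $\log L(1,\pi_1)$ and $\log L(1,\pi_2)$ to the same short prime sum using Lemma~\ref{lem:L(1)=sum_p}, combine them, and then exploit the pointwise hypothesis on $a\lambda_{\pi_1}(p) - b\lambda_{\pi_2}(p)$ together with Mertens' theorem. Set $Q := \mathbf{c}(\pi_1)\mathbf{c}(\pi_2)$ and fix $x := (\log Q)^{2+\varepsilon}$ for some small $\varepsilon > 0$. Since each $L(s,\pi_i)$ is holomorphic, lies in the extended Selberg class, satisfies GRH by hypothesis, and has $\mathbf{c}(\pi_i) \leq Q$, Lemma~\ref{lem:L(1)=sum_p} applied with this $x$ yields
$$\log L(1,\pi_i) = \sum_{p<x} \frac{\lambda_{\pi_i}(p)}{p} + O_{\delta,\varepsilon}(1), \qquad i = 1,2.$$

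Forming the linear combination $a\log L(1,\pi_1) - b\log L(1,\pi_2)$ and inserting the hypothesis $a\lambda_{\pi_1}(p) - b\lambda_{\pi_2}(p) \leq \alpha$ for every prime $p$ gives
$$a\log L(1,\pi_1) - b\log L(1,\pi_2) \leq \alpha \sum_{p<x}\frac{1}{p} + O_{a,b,\delta,\varepsilon}(1) = \alpha\log\log\log Q + O_{a,b,\delta,\varepsilon}(1),$$
where the last equality comes from Mertens' theorem $\sum_{p<x} 1/p = \log\log x + O(1)$ and the choice $\log x \asymp \log\log Q$. Exponentiating both sides yields the upper bound
$$L(1,\pi_1)^a\, L(1,\pi_2)^{-b} \ll_{a,b,\alpha} (\log\log Q)^{\alpha}.$$
The lower bound follows by the identical argument applied to the reverse inequality: if $a\lambda_{\pi_1}(p) - b\lambda_{\pi_2}(p) \geq -\alpha$, the same computation gives $a\log L(1,\pi_1) - b\log L(1,\pi_2) \geq -\alpha\log\log\log Q + O(1)$, and exponentiating produces the matching $(\log\log Q)^{-\alpha}$.

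No genuine obstacle is expected: the proof is essentially a one-line consequence of Lemma~\ref{lem:L(1)=sum_p} and Mertens' estimate once the correct scale $x = (\log Q)^{2+\varepsilon}$ is chosen. The only minor point worth checking is that $\log L(1,\pi_i)$ is well-defined as a real quantity, which is immediate for the $L$-functions in question since they are self-dual with real Dirichlet coefficients (cf.~\eqref{eq:reallambda}) and non-vanishing at $s=1$ under GRH.
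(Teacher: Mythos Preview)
Your proof is correct and follows essentially the same approach as the paper, which simply says to apply Lemma~\ref{lem:L(1)=sum_p} to both $L(s,\pi_1)$ and $L(s,\pi_2)$ with $x=(\log \mathbf{c}(\pi_1)\mathbf{c}(\pi_2))^3$; your choice $x=(\log Q)^{2+\varepsilon}$ works equally well, and you spell out the Mertens step that the paper leaves implicit.
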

\begin{proof}
    This follows from the previous lemma applied to $L(s,\pi_1)$ and $L(s,\pi_2)$ with $x=(\log \mathbf{c}(\pi_1)\mathbf{c}(\pi_2))^3$.
\end{proof}

\begin{cor}\label{cor:sym2at1}
  Let $\pi$ be an automorphic representation for $\PGL_2/\Q$ and assume the conditions of Lemma \ref{lem:L(1)=sum_p} for $ L(s,\sym^2 \pi)$, including GRH. Then it holds that 
  \begin{equation*}
      L(1,\sym^2 \pi)\gg (\log \log \mathbf{c}(\pi))^{-1}
  \end{equation*}
\end{cor}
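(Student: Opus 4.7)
The plan is to deduce the lower bound directly from Corollary~\ref{cor:boundsL(1)} by taking $\pi_1$ trivial and $\pi_2 = \sym^2 \pi$, with $a=0$, $b=1$. This reduces the task to establishing a uniform pointwise lower bound of the shape $\lambda_{\sym^2 \pi}(p) \geq -\alpha$ at every prime $p$, for some absolute constant $\alpha$ (we will take $\alpha = 1$). Once this is in place, the second half of Corollary~\ref{cor:boundsL(1)} yields
\[
L(1,\sym^2 \pi)^{-1} \ll_{\alpha} \log\log \mathbf{c}(\sym^2 \pi),
\]
and since $\sym^2 \pi$ is a functorial lift whose Satake and archimedean parameters are polynomially controlled by those of $\pi$, we have $\log\log \mathbf{c}(\sym^2 \pi) \asymp \log\log \mathbf{c}(\pi)$, which gives the claim.

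To verify the pointwise lower bound $\lambda_{\sym^2 \pi}(p) \geq -1$, I would split into two cases. At a prime $p$ where $\pi$ is unramified, the Hecke relation $\lambda_\pi(p)^2 = \lambda_\pi(p^2) + 1$ combined with $\lambda_{\sym^2 \pi}(p) = \lambda_\pi(p^2)$ (cf.~\eqref{eq:sym2sym4}) gives
\[
\lambda_{\sym^2 \pi}(p) = \lambda_\pi(p)^2 - 1.
\]
By \eqref{eq:reallambda}, $\lambda_\pi(p) \in \R$, so $\lambda_\pi(p)^2 \geq 0$ and hence $\lambda_{\sym^2 \pi}(p) \geq -1$. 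At a prime $p$ where $\pi$ is ramified (necessarily a special representation under our hypotheses, via the Jacquet--Langlands correspondence or the classification used in Section~\ref{sec:l-functions}), the local factor is $L_p(s, \sym^2 \pi) = (1-p^{-s-1})^{-1}$, so $\lambda_{\sym^2 \pi}(p) = p^{-1} > -1$.

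Finally, I need to check that the hypotheses of Corollary~\ref{cor:boundsL(1)} are satisfied for $\pi_2 = \sym^2 \pi$: holomorphy in the extended Selberg class, GRH, and the bound $|\alpha_{\sym^2 \pi}(p,j)| \leq p^{\delta}$ with $\delta < 1/2$. The first is granted (under the implicit square-free conductor hypothesis) by Lemma~\ref{lem:sym2sym4}; GRH for $L(s,\sym^2 \pi)$ is part of the standing assumption imported from Lemma~\ref{lem:L(1)=sum_p}; and the Satake bound follows from Kim--Sarnak \eqref{eq:KimSarnak}, giving $|\alpha_{\sym^2 \pi}(p,j)| \leq p^{14/64} < p^{1/2}$. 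There is no real obstacle here: the only delicate point is ensuring that the ramified local factors do not destroy the uniform lower bound on $\lambda_{\sym^2 \pi}(p)$, which the explicit local formula \eqref{eq:localsym2} handles cleanly.
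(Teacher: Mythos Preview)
Your proof is correct and follows the same route as the paper's, which simply invokes Corollary~\ref{cor:boundsL(1)} together with the Hecke relation $\lambda_{\sym^2\pi}(p)=\lambda_\pi(p)^2-1\geq -1$. One small slip: with your parameters $a=0$, $b=1$, $\pi_2=\sym^2\pi$, it is the \emph{first} half of Corollary~\ref{cor:boundsL(1)} (the upper bound $L(1,\pi_1)^aL(1,\pi_2)^{-b}\ll(\log\log)^{\alpha}$, applied to $L(1,\sym^2\pi)^{-1}$) that gives your displayed inequality, not the second half; alternatively, take $\pi_1=\sym^2\pi$, $a=1$, $b=0$ and use the second half directly.
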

\begin{proof}
    This follows from Corollary \ref{cor:boundsL(1)} since, using the Hecke relations,
    $$\lambda_\pi(p^2)=\lambda_\pi(p)^2-1\geq -1.$$
\end{proof}
\begin{cor}\label{cor:boundsats=1}
    Let $L(s,\pi)$ satisfy the conditions of Lemma~\ref{lem:L(1)=sum_p}, including GRH. Then for every $\varepsilon>0$ 
    \begin{equation*}
\exp\left(- (\log \mathbf{c}(\pi))^{2\delta +\varepsilon}\right)   \ll_{\varepsilon, \delta}     L(1,\pi)\ll_{\varepsilon, \delta} \exp\left( (\log \mathbf{c}(\pi))^{2\delta +\varepsilon}\right).  
    \end{equation*}
\end{cor}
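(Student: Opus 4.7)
The plan is to apply Lemma~\ref{lem:L(1)=sum_p} directly with a suitable choice of $x$, and then control the short Dirichlet sum over primes using the assumed pointwise bound $|\alpha_\pi(p,j)| \leq p^{\delta}$ on the Satake parameters, which forces $|\lambda_\pi(p)| \leq d \, p^{\delta}$.

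Concretely, fix $\varepsilon > 0$ and choose $\varepsilon' > 0$ small enough that $(2+\varepsilon')\delta \leq 2\delta + \varepsilon$. Set $x = (\log \mathbf{c}(\pi))^{2+\varepsilon'}$, which is $\geq (\log \mathbf{c}(\pi))^{2+\varepsilon'}$ as required by the second assertion of Lemma~\ref{lem:L(1)=sum_p}. Then
\begin{equation*}
\log L(1,\pi) = \sum_{p < x} \frac{\lambda_\pi(p)}{p} + O_{\delta,\varepsilon}(1).
\end{equation*}
Using $|\lambda_\pi(p)| \leq d\, p^{\delta}$ together with partial summation and the prime number theorem, we bound
\begin{equation*}
\left| \sum_{p<x} \frac{\lambda_\pi(p)}{p} \right| \leq d \sum_{p < x} p^{\delta - 1} \ll_{d,\delta} \frac{x^{\delta}}{\log x}.
\end{equation*}
Inserting the choice of $x$, the right-hand side is $\ll_{d,\delta,\varepsilon} (\log \mathbf{c}(\pi))^{(2+\varepsilon')\delta}/\log\log \mathbf{c}(\pi) \ll (\log \mathbf{c}(\pi))^{2\delta + \varepsilon}$. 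Combining with the error $O_{\delta,\varepsilon}(1)$ yields
\begin{equation*}
|\log L(1,\pi)| \ll_{\delta,\varepsilon} (\log \mathbf{c}(\pi))^{2\delta + \varepsilon},
\end{equation*}
and exponentiating gives both the upper and lower bounds in the statement.

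There is no substantial obstacle: the only point requiring mild care is that, if $\delta = 0$, the estimate $\sum_{p<x} p^{\delta-1} \asymp \log\log x$ is even better than what is claimed, so the bound holds trivially in that edge case. For $\delta > 0$, the partial summation step is entirely standard given that the Satake bound is assumed uniformly in $p$.
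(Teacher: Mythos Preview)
Your proof is correct and follows essentially the same route as the paper: apply Lemma~\ref{lem:L(1)=sum_p} with $x=(\log \mathbf{c}(\pi))^{2+\varepsilon}$ and bound the resulting short prime sum using $|\lambda_\pi(p)|\le d\,p^{\delta}$. The paper's proof is a one-line reference to the lemma with this choice of $x$; you have simply made the final estimate on $\sum_{p<x}\lambda_\pi(p)/p$ explicit.
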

\begin{proof}
    This follows by Lemma~\ref{lem:L(1)=sum_p} with $x=(\log \mathbf{c}(\pi))^{2 +\varepsilon}$.
\end{proof}

\subsection{$L$-functions at the central point}
In this section we will state the key upper bound for $L$-functions at the central point $s=1/2$ under GRH. This builds on an inequality firstly proved by Soundararajan \cite{Sound09} for the Riemann $\zeta$-function and then extended by Chandee \cite[Theorem 2.1]{Chandee09} to general $L$-functions.
Our treatment becomes more complicated compared to \cite[Sec.~6.3]{Blomer2024}, due to the appearance of higher degree $L$-functions and the need for more delicate arguments in order to avoid GRC.
See also Remark \ref{rem:largerxGRHbound} below.
\begin{prop}\label{prop:GRHbound}
    Let $S$ be a set of primes and let $\pi_1,\pi_2,\pi_3$ be automorphic representations of $\PGL(2)/\Q$. 
    Assume GRH for $L(s,\pi_1\otimes \pi_2\otimes \pi_3)$. 
    Let $0<\varepsilon<1$ and 
    \begin{equation}\label{eq:lowerboundx}
        x\geq \begin{cases}
            4,& \pi_1\not\cong \pi_2\\
            (\log \mathbf{c}(\pi_1\otimes \pi_2)) ^{9/\varepsilon},& \pi_1\cong \pi_2
        \end{cases}
    \end{equation}
    Then there exists a constant $B>0$ such that 
    \begin{align*}
       \log L(1/2,\pi_1\otimes \pi_2\otimes \pi_3) &\leq (1-\varepsilon)\mu_{\pi_1,\pi_2}(x) + \mathcal{S}_{1/2}(\pi_1\otimes \pi_2\otimes \pi_3;x ) \\
       &+ \sum_{n=2}^5 \mathcal{S}_{n/2}(\pi_1\otimes \pi_2\otimes \pi_3;x ) \\
       & +10\frac{\log \mathbf{c}(\pi_1\otimes \pi_2\otimes \pi_3 )}{\log x} \\
       &+O_S(\log \log \log \mathbf{c}(\pi_1\otimes \pi_2\otimes \pi_3)),
    \end{align*}
    where the first term is given by
    \begin{equation}\label{eq:defmupi_i}
        \mu_{\pi_1,\pi_2}(x):=
        \begin{cases}
        -\frac{1}{2}\log \log x+B (\log \mathbf{c}(\pi_1\otimes \pi_2))^{15/16},& \pi_1\neq \pi_2,\\
        -\log\log x-\frac{1}{2}\log L(1,\sym^4 \pi)+\frac{1}{2}\log L(1,\sym^2 \pi),& \pi_1=\pi_2=\pi,
        \end{cases}
    \end{equation}
    the second term is
    \begin{equation}\label{eq:pterms}
        \mathcal{S}_{1/2}(\pi_1\otimes \pi_2\otimes \pi_3;x ) := \sum_{p\leq x, p\notin S}\frac{\lambda_{\pi_1}(p)\lambda_{\pi_2}(p)\lambda_{\pi_3}(p)}{p^{1/2+1/\log x}} \cdot \frac{\log (x/p)}{\log x},
    \end{equation}
    and, for $2\leq n\leq 5$,
    \begin{multline*}
        \mathcal{S}_{n/2}(\pi_1\otimes \pi_2\otimes \pi_3;x )\\
        :=\frac{1}{n}\sum_{p^n \leq x, p\notin S}\frac{(\sum_{m=1}^nc(m,n)\lambda_{\pi_3}(p^m)) \prod_{i=1}^2 (\sum_{m=0}^nc(m,n)\lambda_{\pi_i}(p^m))}{p^{n/2+n/\log x}}\frac{\log (x/p^n)}{\log x},
    \end{multline*}
    for certain constants $c(m,n)\in \Z$. 
\end{prop}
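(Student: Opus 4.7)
The starting point is the classical Soundararajan upper bound under GRH, extended by Chandee to $L$-functions in the extended Selberg class. Applied to $L(s, \pi_1 \otimes \pi_2 \otimes \pi_3)$, this yields a bound of the form
\begin{equation*}
    \log |L(1/2, \pi_1 \otimes \pi_2 \otimes \pi_3)| \leq \Re \sum_{p^n \leq x} \frac{\tfrac{1}{n}\prod_{i=1}^3(\alpha_{\pi_i}(p)^n + \alpha_{\pi_i}(p)^{-n})}{p^{n(1/2+1/\log x)}} \cdot \frac{\log(x/p^n)}{\log x} + \frac{\log \mathbf{c}(\pi_1\otimes\pi_2\otimes\pi_3)}{\log x} + O(1),
\end{equation*}
since the Dirichlet coefficients of $-\log L(s, \pi_1\otimes\pi_2\otimes\pi_3)$ at $p^n$ are exactly $\tfrac{1}{n}\prod_i(\alpha_{\pi_i}(p)^n + \alpha_{\pi_i}(p)^{-n})$. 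The Kim--Sarnak bound \eqref{eq:KimSarnak} gives $|\alpha_{\pi_i}(p)^n + \alpha_{\pi_i}(p)^{-n}| \ll p^{7n/64}$, so the tail $n \geq 6$ is absorbed into the $O(1)$ error.

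For $1 \leq n \leq 5$, I would invoke the Hecke relations of Section~\ref{sec:hecke-rel} to write $\alpha_{\pi_i}(p)^n + \alpha_{\pi_i}(p)^{-n} = \sum_{m=0}^n c(m,n) \lambda_{\pi_i}(p^m)$ for explicit integers $c(m,n)$. The $n=1$ contribution is exactly $\mathcal{S}_{1/2}$. For each $n \geq 2$, I would split the triple product according to whether the $\pi_3$-factor contributes its constant term $c(0,n)$ or a genuine $\lambda_{\pi_3}(p^m)$ with $m \geq 1$: the second case feeds directly into $\mathcal{S}_{n/2}$, the first case feeds into the mean $\mu_{\pi_1,\pi_2}(x)$.

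It remains to evaluate the mean contribution. For each $n \geq 2$, the $\pi_3$-independent piece is a polynomial in $\lambda_{\pi_1}(p^m), \lambda_{\pi_2}(p^m)$ which, using the Hecke relations and \eqref{eq:sym2sym4}, can be re-expressed as a linear combination of Hecke eigenvalues $\lambda_\pi(p)$ for various (degree $\leq 9$) isobaric sums $\pi$; the dominant $n = 2$ piece is, up to Soundararajan weights, $-\tfrac12 \sum_{p \leq \sqrt x}(\lambda_{\pi_1}(p^2)-1)(\lambda_{\pi_2}(p^2)-1)/p$. Lemma~\ref{lem:L(1)=sum_p} then replaces each partial sum $\sum_{p \leq x}\lambda_\pi(p)/p$ by $\log L(1, \pi) + O(1)$, the relevant $L$-functions being holomorphic by Lemmas~\ref{lem:sym2sym4}--\ref{lem:automorphyRankinSelberg}. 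When $\pi_1 \neq \pi_2$, the resulting $\log L(1, \sym^2 \pi_i)$ and $\log L(1, \sym^2 \pi_1 \otimes \sym^2 \pi_2)$ are absorbed into $B(\log\mathbf{c})^{15/16}$ via Corollary~\ref{cor:boundsats=1}, using that the Satake parameters of $\sym^2 \pi_1 \otimes \sym^2 \pi_2$ are bounded by $p^{7/16}$ (Kim--Sarnak to the fourth power), while the constant terms assemble into $-\tfrac12 \log\log x$.

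The main obstacle is the case $\pi_1 = \pi_2 = \pi$, where the Hecke relation $\lambda_\pi(p^2)^2 = \lambda_{\sym^4 \pi}(p) + \lambda_{\sym^2 \pi}(p) + 1$ forces the degree-$5$ $L$-function $L(s, \sym^4 \pi)$ into the mean, and Corollary~\ref{cor:boundsats=1} only provides the weak bound $\log L(1, \sym^4 \pi) \ll (\log \mathbf{c})^{7/8 + \varepsilon}$, which is too large to absorb into $\mu$. The resolution, as outlined in the introduction, is to retain $-\tfrac{1}{2}\log L(1,\sym^4\pi) + \tfrac{1}{2}\log L(1,\sym^2\pi)$ explicitly inside $\mu$ and then to apply the $(1-\varepsilon)$ change of variables. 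This is admissible because the dominant $-\log\log x$ piece of $\mu$ produces a slack of order $\varepsilon \log\log x$, which dominates the Lemma~\ref{lem:L(1)=sum_p} error $\log \mathbf{c} \cdot \log\log \mathbf{c} \cdot (\log x)^3/x^{1/2}$ precisely under the hypothesis $x \geq (\log \mathbf{c})^{9/\varepsilon}$, and allows the bound to go through without appealing to GRC.
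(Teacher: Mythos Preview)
Your overall architecture is correct and matches the paper's, but there is a genuine gap in how you handle the Soundararajan weights $p^{-n/\log x}\frac{\log(x/p^n)}{\log x}$ in the $n=2$ mean term, and your explanation of where the factor $(1-\varepsilon)$ comes from is not the actual mechanism.

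For $\pi_1=\pi_2=\pi$, the weighted sum is $-\tfrac12\sum_{p^2\le x}(\lambda_\pi(p^2)-1)^2 p^{-1-2/\log x}\frac{\log(x/p^2)}{\log x}$. You cannot directly replace this by the unweighted sum and then invoke Lemma~\ref{lem:L(1)=sum_p}: since the weights lie in $[0,1]$ and the terms are negative, the weighted sum is \emph{larger} than the unweighted one, so the inequality goes the wrong way. The paper instead exploits positivity of $(\lambda_\pi(p^2)-1)^2$ to discard the range $x^{\varepsilon/2}<p^2\le x$ (giving an upper bound), and then uses that on the remaining range $p^2\le x^{\varepsilon/2}$ the weight is $\ge e^{-\varepsilon/2}(1-\varepsilon/2)\ge 1-\varepsilon$; this is the source of the $(1-\varepsilon)$. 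The condition $x\ge(\log\mathbf c)^{9/\varepsilon}$ is then needed so that Lemma~\ref{lem:L(1)=sum_p} still applies with $O(1)$ error after this truncation to $p\le x^{\varepsilon/4}$. Your description (``slack $\varepsilon\log\log x$ dominates the Lemma~\ref{lem:L(1)=sum_p} error'') does not capture this.

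For $\pi_1\ne\pi_2$ there is a parallel problem: the hypothesis only gives $x\ge 4$, so Lemma~\ref{lem:L(1)=sum_p} does \emph{not} give $\sum_{p\le\sqrt x}\lambda_\pi(p)/p=\log L(1,\pi)+O(1)$, and there is no positivity to exploit. The paper removes the weights by partial summation, splits the resulting integral at $t=(\log\mathbf c)^{4/5}$, and balances the trivial bound $|\lambda_\pi(p)|\le p$ on the short range against the full error term of Lemma~\ref{lem:L(1)=sum_p} plus Corollary~\ref{cor:boundsats=1} on the long range; this is what produces the exponent $15/16$. You should also note that for $n\ge 3$ the $\pi_3$-constant piece does \emph{not} feed the mean: one has $c(0,3)=0$, and for $n=4,5$ Kim--Sarnak gives $\sum_{p}p^{-9n/32}<\infty$, so those contributions are $O(1)$.
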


\begin{proof}
The starting point is the following key inequality due to Chandee \cite[Theorem 2.1]{Chandee09}\footnote{We note that there is a typo in \emph{loc. cit.}: there is a $\log$ missing in front of $\abs{L(f, 1/2)}$.}, extending ideas of Soundararajan \cite{Sound09}, which in view of (\ref{eq:tripleunram})-(\ref{eq:tripleram3}) reads
\begin{multline} \label{eq:chandee-ineq}
    \log \abs{L(1/2,\pi_1\otimes \pi_2\otimes \pi_3)} \leq \sum_{\substack{p^n\leq x \\ n \geq 0}} \frac{\prod_{i=1}^3(\alpha_{\pi_i}(p,1)^n+\alpha_{\pi_i}(p,2)^n)}{np^{n/2+n/\log x}}\frac{\log (x/p^n)}{\log x} \\
    +\frac{\log \mathbf{c}(\pi_1\otimes \pi_2\otimes \pi_3)}{\log x},\quad \text{for }x \gg 1.
\end{multline}
By the absolute convergence of the Euler product (\ref{eq:Eulerprod}) we conclude that the contribution from $p\in S$ is $O_S(1)$. 
Since $|\alpha_{\pi_i}(p,j)|\leq p^{7/64}$, as noted in \eqref{eq:KimSarnak}, the terms with $n\geq 6$ are absolutely convergent\footnote{Under the Ramanujan conjecture, the terms with $n \geq 3$ would also be absolutely convergent.} and contribute $O(1)$.

For $3\leq n\leq 5$, using the Hecke relations (see Section \ref{sec:hecke-rel}), we have for $p$ unramified
\begin{align}\label{eq:prod123}
  \prod_{i=1}^3(\alpha_{\pi_i}(p,1)^n+\alpha_{\pi_i}(p,2)^n)= \prod_{i=1}^3\left(\sum_{m=0}^n c(m,n)\lambda_{\pi_i}(p^m)\right),
\end{align}
for certain constants $c(m,n)\in \Z$. 
We now claim that the term $m = 0$ in the factor $i = 3$ is negligible.
Indeed, recall from \eqref{eq:hecke-rel-third-pow} that for $n=3$ we have $c(0,3)=0$.
For $n\geq 4$, the contribution in \eqref{eq:chandee-ineq} from the factor with $i=3$ and $m=0$ in \eqref{eq:prod123} is simply bounded, using the Kim--Sarnak bound \eqref{eq:KimSarnak} factor by factor, by
$\ll p^{n \cdot 7/64 + n \cdot 7/64} = p^{7n/32}$. 
Thus, the total contribution from these terms is bounded by
$$\ll_n \sum_{p^n\leq x} \frac{1}{p^{n/2-7n/32}},$$
which is absolutely convergent for $n\geq 4$. 
This proves that the contribution form $3\leq n\leq 5$ is indeed bounded by $$\sum_{n=3}^5\mathcal{S}_{n/2}(\pi_1\otimes\pi_2\otimes\pi_3;x)+O(1).$$

Similarly, we now consider the terms in \eqref{eq:chandee-ineq} where $n = 2$.
Here, as in Section \ref{sec:hecke-rel},
\begin{displaymath}
    \alpha_{\pi_i}(p,1)^2 + \alpha_{\pi_i}(p,2)^2 = \lambda_{\pi_i}(p)^2 - 2 = \lambda_{\pi_i}(p^2) - 1.
\end{displaymath}
To isolate the term $\mathcal{S}_{1}(\pi_1\otimes\pi_2\otimes\pi_3;x)$, we again consider the contribution of the term $m = 0$ in the third factor of \eqref{eq:prod123}.
By the previous computation, this is
\begin{equation*}
    -\frac{1}{2}\sum_{p^2\leq x,p\notin S}\frac{(\lambda_{\pi_1}(p^2)-1)(\lambda_{\pi_2}(p^2)-1)}{p^{1+2/\log x}} \frac{\log (x/p^2)}{\log x}.
\end{equation*}
If $\pi_1=\pi_2=\pi$, then positivity and the Hecke relations imply that the previous display is
\begin{align*}
    &\leq -\frac{1}{2}\sum_{p^2\leq x^{\varepsilon/2},p\notin S}\frac{(\lambda_{\pi}(p^2)-1)^2}{p^{1+2/\log x}} \frac{\log (x/p^2)}{\log x}\\
    &\leq -\frac{1}{2}\sum_{p^2\leq x^{\varepsilon/2},p\notin S}\frac{(\lambda_{\pi}(p^2)-1)^2}{p} e^{-\varepsilon/2}\left(1-\frac{\varepsilon}{2}\right) \\
    &\leq -\frac{1-\varepsilon}{2}\sum_{p\leq x^{\varepsilon/4},p\notin S}\frac{\lambda_{\pi}(p^4)-\lambda_{\pi}(p^2)+2}{p} 
\end{align*}
Here we use the elementary bound $e^{-\varepsilon/2} \geq 1- \varepsilon/2$.
If $x\geq (\log \mathbf{c}(\pi\otimes \pi))^{9/\varepsilon}$, the last expression in the display above is equal to
\begin{displaymath}
    (1-\varepsilon)\mu_{\pi,\pi}(x)+O_\varepsilon(1),
\end{displaymath}
which we deduce using the identities \eqref{eq:sym2sym4}, the holomorphicity from Lemma~\ref{lem:sym2sym4} and finally Lemma~\ref{lem:L(1)=sum_p} to relate the sum over primes to the $L$-values at $s=1$ (which applies by the assumption (\ref{eq:lowerboundx}) on $x$), as well as the estimate $\log\log x^\varepsilon=\log\log x+O_{\varepsilon}(1)$. 

If $\pi_1\neq \pi_2$ we write the contribution as 
\begin{multline*}
    -\frac{1}{2}\sum_{p\leq \sqrt{x},p\notin S}\frac{\lambda_{\sym^2\pi_1\otimes \sym^2\pi_2}(p)-\lambda_{\sym^2\pi_1}(p)-\lambda_{\sym^2\pi_2}(p)}{p^{1+2/\log x}} \frac{\log (x/p^2)}{\log x} \\
    -\frac{1}{2}\sum_{p\leq \sqrt{x},p\notin S}\frac{1}{p^{1+2/\log x}} \frac{\log (x/p^2)}{\log x}
\end{multline*}
For $\pi \in \{\sym^2\pi_1, \sym^2\pi_2, \sym^2\pi_1\otimes \sym^2\pi_2\}$, we have by partial summation that
\begin{align*}
   &\sum_{p\leq \sqrt{x},p\notin S} \frac{\lambda_\pi(p)}{p^{1+2/\log x}} \frac{\log (x/p^2)}{\log x}\\
   &= \int_1^{\sqrt{x}} \left(\sum_{p\leq t,p\notin S} \frac{\lambda_\pi(p)}{p} \right) \left(\frac{2 \log (x/t^2)}{t^{1+2/\log x}(\log x)^2}+\frac{2 }{t^{1+2/\log x}\log x}\right) \, dt\\
   &\ll \int_2^{\sqrt{x}} \left(\sum_{p\leq t,p\notin S} \frac{\lambda_\pi(p)}{p} \right) \frac{1}{t\log x}\, dt.
\end{align*}
We split the integral into two ranges.
First, for any $\eta < 1$ to be chosen later, note that we have the bound
\begin{displaymath}
    \int_{2}^{\log(\mathbf{c}(\pi))^\eta} \left(\sum_{p\leq t,p\notin S} \frac{\lambda_\pi(p)}{p} \right) \frac{1}{t\log x}\, dt \ll \log(\mathbf{c}(\pi))^\eta,
\end{displaymath}
since $|\lambda_\pi(p)| \leq p$ by absolute convergence of the Euler product (\ref{eq:Eulerprod}).
Next, for the complementary range, we apply Lemma~\ref{lem:L(1)=sum_p} and write
\begin{multline*}
    \int_{\log(\mathbf{c}(\pi))^\eta}^{\sqrt{x}} \left(\sum_{p\leq t,p\notin S} \frac{\lambda_\pi(p)}{p} \right) \frac{1}{t\log x}\, dt\\
    \ll \int_{\log(\mathbf{c}(\pi))^\eta}^{\sqrt{x}} \left( |\log L(1,\pi)|+O((\log t)^3t^{-1/2}\log \mathbf{c}(\pi)\log \log \mathbf{c}(\pi))\right) \frac{1}{t\log x}\, dt
\end{multline*}
For the second term in the integral, we bound $\log \mathbf{c}(\pi) \log \log \mathbf{c}(\pi)/ t^{1/4}$ as $O_\varepsilon (\log (\mathbf{c}(\pi))^{1-\eta/4 + \varepsilon})$, and observe that the remaining integral of $\log(t)^3 t^{-1-1/4}$ is uniformly bounded.
For the first term, we apply Corollary \ref{cor:boundsats=1}.
Here, we note that the Satake parameters of any $\pi$ in the set above are bounded by $(p^{7/64})^4 = p^{7/16}$ by \eqref{eq:KimSarnak} (see the discussion on Satake parameters after \eqref{eq:sym2sym4}).
Thus, $\log L(1,\pi) \ll \log (\mathbf{c}(\pi))^{14/16 + \varepsilon}$ for any $\varepsilon > 0$.
All in all, we may choose for example $\eta = 4/5$ and we obtain that
\begin{displaymath}
    \sum_{p\leq \sqrt{x},p\notin S} \frac{\lambda_\pi(p)}{p^{1+2/\log x}} \frac{\log (x/p^2)}{\log x} \ll \log (\mathbf{c}(\pi))^{15/16}.  
\end{displaymath}

We finish the proof by observing that
\begin{align*}
    -\frac{1}{2}\sum_{p\leq \sqrt{x},p \notin S}\frac{1}{p^{1+2/\log x}} \frac{\log (x/p^2)}{\log x}&=-\frac{1}{2}\sum_{p\leq \sqrt{x}}\frac{1}{p} \left(1+O\left(\frac{\log (p)}{\log x}\right)\right)+O_S(1) \\
    &= -\frac{1}{2}\log \log x+O_S(1).
\end{align*}
by Mertens' first and second theorem.
\end{proof}

For our applications, we will apply the above proposition with $\pi_1,\pi_2,\pi_3$ automorphic representations of $\PGL(2)$ with conductors dividing $qD$ with $D$ square-free (and fixed) and $q$ a prime varying. We will pick $S=\{p:p| D\}$ and $x<q$ so that all primes appearing in the sum defining $\mathcal{S}_{n/2}(\pi_1\otimes \pi_2\otimes \pi_3;x)$ are unramified for $\pi_1,\pi_2,\pi_3$.

\begin{rem}\label{rem:largerxGRHbound}
    We note two differences between Proposition~\ref{prop:GRHbound} and \cite[Cor.\ 7]{Blomer2024} (see also \cite[eq.\ (5.6)]{lester-radz}): firstly, when $\pi_1\cong \pi_2$ we need that $x$ is bigger than a large power of $\log q$ (depending on $\varepsilon$), and secondly, we get an upper bound in terms of $(1-\varepsilon)\mu_{\pi_1,\pi_2}(x)$ instead of $\mu_{\pi_1,\pi_2}(x)$. The cause of these differences is that the proof of \cite[Cor.\ 7]{Blomer2024} relies on the inequality $(\lambda(p^2)-1)(1-\eta_E(p))\geq -2$ where $\lambda(n)$ denotes the Hecke eigenvalues of an automorphic representation of $\PGL(2)/\Q$. In our setting the corresponding term is $-(\lambda_{\pi_1}(p^2)-1)(\lambda_{\pi_2}(p^2)-1)$, which apriori, without assuming GRC, can be arbitrarily negative. Thankfully, in the case $\pi_1\not\cong\pi_2$ (where we do not know how to bound the above term  neither from below nor above) we can allow for a polynomial dependence on the conductors of $\pi_1,\pi_2$. Whereas when $\pi_1\cong\pi_2$, the term is always negative which we can use to our advantage when $x$ is bigger than a sufficiently large power of $\log q$ at the cost of replacing the saving of $\mu_{\pi_1,\pi_2}(x)$ by $(1-\varepsilon)\mu_{\pi_1,\pi_2}(x)$. 
\end{rem}

\section{High moments of short Dirichlet polynomials}
In this, section we use the Kuznetsov formula to bound high moments of short averages of Dirichlet polynomials over (essentially) the spectral family $\{\varphi\in \Bc_\mathrm{new}(Y_q): t_\varphi \leq 100\}$. This will be in accordance with random model discussed in Section \ref{sec:heuristics}, according to which $\varphi\mapsto \lambda_{\varphi}(p)/p^{1/2}$ behave like independent random variables $X_p$ with mean zero and variance $p^{-1}$. In this case, we would expect by the central limit theorem for a sequence $b(p)$ of real numbers that $\sum_{p\leq x} b(p)X_p$ should be well approximated by a normal distribution with mean zero and variance $\sum_{p\leq x} b(p)^2p^{-1}$ so that 
\begin{equation*}
 \mathbb{E}\left(\left(  \sum_{p\leq x} b(p)X_p \right)^{2k}\right)= \frac{(2k)!}{2^k k!}\left(\sum_{p\leq x} \frac{b(p)^2}{p}\right)^k.  
\end{equation*}
The following result shows that, for $x$ sufficiently small, we can actually make this heuristic precise if we settle for an upper bound. As in the previous section, put 
\begin{equation*}
    h(t)= \left(\cosh \frac{\pi t}{2(t_{f_1}+t_{f_2}+3)}\right)^{-1}.
\end{equation*} 
Note that $h(t)>0$ for $t \in \R \cup i[-1/2,1/2]$.

\begin{prop}[High moment estimate]\label{prop:key}
    Let $2\leq x$ and let $k$ be a positive integer such that $x^{k}<qD/2\pi$. Let $b(p)$ be a sequence of real numbers indexed by primes. Then it holds that
    \begin{multline}
    \label{eq:highmoment}
        \sum_{\varphi\in \Bc_{\mathrm{new}}(Y_0(qD))} \frac{h(is_\varphi)}{L(1,\sym^2 \varphi)} \left(\sum_{p<x, p\nmid Dq} \frac{b(p)\lambda_\varphi(p)}{p^{1/2}}\right)^{2k} \\
        \ll_{f_1,f_2} q\frac{(2k)!}{k!} \left(\frac{1}{2}\sum_{p<x, p\nmid Dq} \frac{b(p)^2}{p}\right)^{k}+q^{-1/2}  \left(2\sum_{p<x,p\nmid Dq}|b(p)|\right)^{2k},
    \end{multline}
    with polynomial dependence on $t_{f_1},t_{f_2}$.
\end{prop}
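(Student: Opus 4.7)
The plan is to apply the Bruggeman--Kuznetsov formula (Corollary~\ref{cor:kuzn}) after expanding the $2k$-th power via the Hecke relations of Section~\ref{sec:hecke-rel}. Since $b(p)\in\R$ and $\lambda_\varphi(p)\in\R$ by \eqref{eq:reallambda}, the inner Dirichlet polynomial $D(\varphi):=\sum_{p<x,\, p\nmid Dq} b(p)\lambda_\varphi(p)/\sqrt{p}$ is real, so $D(\varphi)^{2k}\geq 0$, and similarly for the Eisenstein parameter $D_t$. Combined with the positivity of the weights $\mathcal{W}_\varphi(L)$, $\mathcal{W}_t(qD)$, $h(is_\varphi)$, $|\zeta(1+2it)|^{-2}$, and $L(1,\sym^2\varphi)$ (using \eqref{eq:positivesym2}), this lets me dominate the newform sum of interest by the full spectral side of the Kuznetsov formula applied to $D(\varphi)^{2k}$.

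Next I would expand $D(\varphi)^{2k}$ as a $2k$-fold sum over tuples $\mathbf{p}=(p_1,\dots,p_{2k})$ of primes in $[2,x)$ coprime to $Dq$, group equal primes, and iteratively apply \eqref{eq:powers-of-lambda} to write $\prod_{i=1}^{2k}\lambda_\varphi(p_i)=\sum_n \gamma_{\mathbf{p}}(n)\lambda_\varphi(n)$, where $\gamma_{\mathbf{p}}(n)\geq 0$ by Lemma~\ref{lem:Heckerelations}\eqref{item:hecke2} and is supported on $n$ dividing $p_1\cdots p_{2k}$. Summing over $\mathbf{p}$ with the coefficients $c_{\mathbf{p}}:=\prod b(p_i)/\sqrt{\prod p_i}$ yields $D(\varphi)^{2k}=\sum_n \gamma(n)\lambda_\varphi(n)$ where $\gamma(n)$ may have mixed signs but satisfies $|\gamma(n)|\leq \tilde\gamma(n):=\sum_{\mathbf{p}}|c_{\mathbf{p}}|\gamma_{\mathbf{p}}(n)$. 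The support is contained in $n\leq x^{2k}<(qD)^2/(4\pi^2)$, precisely the range where Corollary~\ref{cor:kuzn} applies with $m=1$. Noting $h^0\ll r^2$ with $r=2(t_{f_1}+t_{f_2}+3)$ (Lemma~\ref{lem:h0}), the termwise Kuznetsov estimate produces an upper bound of shape
\[
qD\,h^0\,\gamma(1)+O\!\left(\frac{r^{1/2}}{\sqrt{qD}}\sum_n \tilde\gamma(n)\sqrt{n}\right).
\]

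The core of the argument is the evaluation of $\gamma(1)$. By Lemma~\ref{lem:Heckerelations}\eqref{item:hecke3}, only tuples in which every prime appears with even multiplicity contribute, so $c_{\mathbf{p}}\geq 0$ for these and $\gamma(1)=\tilde\gamma(1)\geq 0$. Unpacking the multinomial count of ordered tuples of type $\{q_1^{2s_1},\dots,q_\ell^{2s_\ell}\}$ yields
\[
\gamma(1)=(2k)!\sum_{\substack{\{q_j\}\text{ distinct}\\ s_j\geq 1,\;\sum s_j=k}}\prod_j \frac{b(q_j)^{2s_j}}{q_j^{s_j}\, s_j!\,(s_j+1)!}.
\]
The elementary bound $(s+1)!\geq 2^s$ for $s\geq 1$ applied termwise removes each $(s_j+1)!$ at cost $2^{-k}$, and the remaining sum collapses to $k!^{-1}\bigl(\sum_p b(p)^2/p\bigr)^k$ by the multinomial identity for $(\sum_p b(p)^2/p)^k$, giving $\gamma(1)\leq \frac{(2k)!}{k!}\bigl(\tfrac12\sum_p b(p)^2/p\bigr)^k$. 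The prefactor $Dh^0\ll r^2$ is polynomial in $t_{f_i}$ and absorbed into the implied constant, matching the first term in \eqref{eq:highmoment}. For the Kloosterman contribution, the crude bound $\sqrt{n}\leq \sqrt{p_1\cdots p_{2k}}$ together with Lemma~\ref{lem:Heckerelations}\eqref{item:hecke4} (giving $\sum_n\gamma_{\mathbf{p}}(n)\leq 2^{2k}$) yields $\sum_n \tilde\gamma(n)\sqrt{n}\leq 4^k\bigl(\sum_p|b(p)|\bigr)^{2k}$, which matches the second term after absorbing $r^{1/2}/\sqrt{D}$ into the $t_{f_i}$ dependence.

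The main obstacle is the combinatorial step: the Catalan-type weights $\alpha^{(1)}_{0,2s}=\tfrac{(2s)!}{s!(s+1)!}$ produced by Hecke reduction must combine into the Gaussian moment constant $\tfrac{(2k)!}{k!2^k}$, and higher-multiplicity configurations (primes appearing four or more times) must be shown to be suppressed rather than dominant. The inequality $(s+1)!\geq 2^s$ accomplishes both tasks cleanly without any appeal to the Ramanujan conjecture, which is essential for the unconditional nature of the moment estimate.
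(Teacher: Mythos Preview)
Your proof is correct and follows essentially the same route as the paper's: both dominate the newform sum by the full spectral side using positivity, expand the $2k$-th power via the Hecke relations, apply Corollary~\ref{cor:kuzn} termwise, and bound the diagonal using the Catalan identity $\alpha^{(1)}_{0,2s}=\tfrac{(2s)!}{s!(s+1)!}$ together with the inequality $(s+1)!\geq 2^s$. The only difference is bookkeeping---the paper parametrizes the expansion by integers $n$ with $\Omega(n)=2k$ and the multiplicative function $\nu(p^\alpha)=\alpha!$, whereas you sum over ordered prime tuples; these are equivalent, and your multinomial collapse to $k!^{-1}\bigl(\sum_p b(p)^2/p\bigr)^k$ is exactly the paper's bound $\tfrac{c_{1,p^{2\alpha}}\nu(p^\alpha)}{\nu(p^{2\alpha})}\leq 2^{-\alpha}$ unwound.
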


\begin{proof}
We recall the Kuznetsov formula in Theorem~\ref{thm:kuzn} and, with a view towards applying it, we first perform some transformations to the quantity of interest.
By positivity we see that the left-hand side of \eqref{eq:highmoment} is bounded by
\begin{multline*}
    \sum_{LM=Dq}\sum_{\varphi\in \Bc_\mathrm{new}(Y_0(M))}\frac{h(is_\varphi)}{L(1,\sym^2 \varphi)}\mathcal{W}_\varphi(L)  \left(\sum_{p<x, p\nmid Dq} \frac{b(p)\lambda_\varphi(p)}{p^{1/2}}\right)^{2k} \\
    +\frac{1}{4\pi}\int_\R  \frac{h(t)}{|\zeta(1+2it)|^2} \mathcal{W}_t(Dq)\left(\sum_{p<x, p\nmid Dq} \frac{b(p)\lambda_t(p)}{p^{1/2}}\right)^{2k} dt,    
\end{multline*}
using here that all the terms are real numbers, see equations (\ref{eq:reallambda}), (\ref{eq:positivesym2}), (\ref{eq:Wvarphi}) and (\ref{eq:Wt}).
Define $\nu:\mathbb{N}\rightarrow \mathbb{N}$ as the multiplicative function such that $\nu(p^\alpha)=\alpha!$ and extend $b(p)$ to a completely multiplicative function on all integers.
Following Blomer--Brumley \cite[Lemma 10]{Blomer2024}, we open up the $2k$-power and apply the Hecke relations (\ref{eq:Heckerelation}) to get
\begin{equation*}
    \left(\sum_{p<x, p\nmid Dq} \frac{b(p)\lambda_\varphi(p)}{p^{1/2}}\right)^{2k}= \sum_{\substack{p|n\Rightarrow p<x, p\nmid Dq\\ \Omega(n)=2k}}\frac{(2k)!b(n)}{\nu(n)\sqrt{n}}\sum_{m|n} c_{m,n}\lambda_\varphi(m),
\end{equation*}
where $c_{m,n}\in \Z$ are the coefficients that come out of the Hecke relations,  
which are independent of $\varphi$.
In particular, for $n=\prod_{i=1}^r p_i^{k_i}$ we have by \eqref{item:hecke1} of Lemma~\ref{lem:Heckerelations} that $$c_{1,n}=\prod_{i=1}^r \left(\delta_{2|k_i}\frac{k_i!}{(\frac{k_i}{2})!(\frac{k_i}{2}+1)!}\right).$$
We obtain similar expression for the continuous spectrum. 

Since the coefficients $c_{m,n}$ are independent of $\varphi$, we can apply the Kuznetsov formula as in Corollary \ref{cor:kuzn} to the pair of integers $(m,1)$, with $m$ corresponding to a term $\lambda_\varphi(m)$ in the above sum (using here that $m\leq x^{2k}\leq (Dq)^2/4\pi^2$). This yields a diagonal contribution from $m=1$ of
\begin{equation*}
    Dq h^0 \sum_{\substack{p|n\Rightarrow p<x, p\nmid Dq\\ \Omega(n)=k}}\frac{(2k)!b(n^2)}{\nu(n^2)n}c_{1,n^2},
\end{equation*}
where we used that $c_{1,n} = 0$ if $n$ is not a square.
Note that for each prime power it holds that
\begin{equation*}
  \frac{c_{1,p^{2\alpha}}\nu(p^\alpha)}{\nu(p^{2\alpha})} = \frac{(2\alpha)!\, \alpha!}{\alpha!\,(\alpha+1)! \,(2\alpha)!}\leq 2^{-\alpha}.  
\end{equation*}
Thus, the diagonal contribution is bounded by
\begin{equation*}
    Dq h^0\frac{(2k)!}{2^k k!}\left(\sum_{p<x,p\nmid Dq}\frac{b(p)^2}{p}\right)^k,
\end{equation*}
which is admissible by Lemma~\ref{lem:h0}. The off-diagonal contribution is
\begin{equation*}
    \ll_{f_1,f_2} q^{-1/2}\sum_{\substack{p|n\Rightarrow p<x, p\nmid Dq\\ \Omega(n)=2k}}\frac{(2k)! \cdot \abs{b(n)}}{\nu(n)\sqrt{n}}\sum_{m|n} c_{m,n}\sqrt{m}, 
\end{equation*}
where the implied constant depends polynomially on $f_1,f_2$. Since $\Omega(n)=2k$, we get by \eqref{item:hecke4} of Lemma~\ref{lem:Heckerelations} that $\sum_{m| n}c_{m,n}\leq 2^{2k-1}$. Using that $m\leq n$, we bound the display above crudely by  
\begin{equation}\label{eq:boundingoffdiagonal}
    \ll q^{-1/2}\sum_{\substack{p|n\Rightarrow p<x, p\nmid Dq\\ \Omega(n)=2k}}\frac{(2k)!\cdot \abs{b(n)}2^{2k}}{\nu(n)}\ll q^{-1/2}  \left(2\sum_{p<x,p\nmid Dq}|b(p)|\right)^{2k}. 
\end{equation}
\end{proof}

Given the heuristic in Section \ref{sec:heuristics}, the bound in Proposition~\ref{prop:key} should be essentially sharp. 
For technical reasons we will also need a slight variant of the above. In this case we will settle for suboptimal bounds, which will however suffice for our purposes.
\begin{prop}[High moment estimate, variant]\label{prop:keyvariant}
    Let $x\geq 2$ and let $k,d$ be positive integers such that $x^{dk}<qD/2\pi$.  Let $b(p)$ be a sequence of real numbers indexed by primes. Then it holds that
    \begin{align}
        \label{eq:highmoment-var}
        &\sum_{\varphi\in \Bc_{\mathrm{new}}(Y_0(Dq))} \frac{h(is_\varphi)}{L(1,\sym^2 \varphi)} \left(\sum_{p<x, p\nmid Dq} \frac{b(p)\lambda_\varphi(p^d)}{p^{1/2}}\right)^{2k} \nonumber \\
        &\ll_{f_1,f_2} q (d+1)^{2k}\sum_{2\ell_1+3\ell_2=2k} \frac{(2k)!}{\ell_1!\ell_2!} \left(\sum_{p<x, p\nmid Dq} \frac{b(p)^2}{p}\right)^{\ell_1}\left(\sum_{p<x, p\nmid Dq} \frac{|b(p)|^3}{p^{3/2}}\right)^{\ell_2} \nonumber \\
        &+q^{-1/2}  \left((d+1)\sum_{p<x,p\nmid Dq}|b(p)|\right)^{2k},
    \end{align}
   with polynomial dependence on $t_{f_1},t_{f_2}$.
\end{prop}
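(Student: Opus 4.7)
The plan is to follow the same template as Proposition~\ref{prop:key}, with the complication that each factor $\lambda_\varphi(p^d)^{k_i}$ now expands via \eqref{eq:powers-of-lambda} into $\sum_{j=0}^{dk_i} \alpha^{(d)}_{j,k_i} \lambda_\varphi(p^j)$. First, using positivity of all weights (see \eqref{eq:reallambda}, \eqref{eq:positivesym2}, \eqref{eq:Wvarphi}), I would bound the left-hand side of \eqref{eq:highmoment-var} by the full spectral sum over $LM = Dq$ together with the continuous spectrum. Next, expand the $2k$-th power by grouping terms according to distinct primes $q_1 < \cdots < q_r$ with multiplicities $k_i \geq 1$ summing to $2k$, and apply \eqref{eq:powers-of-lambda} to each $\lambda_\varphi(q_i^d)^{k_i}$. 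The condition $x^{dk} < qD/2\pi$ guarantees that every resulting $m = \prod_i q_i^{j_i}$ satisfies $m \leq x^{2dk} < (qD)^2/(4\pi^2)$, so Corollary~\ref{cor:kuzn} applies and produces a diagonal $m=1$ contribution together with an off-diagonal error.

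The diagonal part requires $j_i = 0$ for all $i$. Since $\alpha^{(d)}_{0,1} = 0$ by Lemma~\ref{lem:Heckerelations}(\ref{item:hecke1}), only multiplicities $k_i \geq 2$ survive; using $\alpha^{(d)}_{0,k_i} \leq (d+1)^{k_i-1}$ from item~(\ref{item:hecke4}), the diagonal is at most
\begin{displaymath}
    Dq \cdot h^0 \cdot (d+1)^{2k} \sum_{r,\, q_1 < \cdots < q_r} \sum_{\substack{k_i \geq 2 \\ \sum k_i = 2k}} \binom{2k}{k_1,\ldots,k_r} \prod_i \frac{|b(q_i)|^{k_i}}{q_i^{k_i/2}}.
\end{displaymath}
The combinatorial heart of the argument is then to expand the target main term, grouping contributions of $A^{\ell_1} B^{\ell_2}$ by the distinct primes appearing in each $A$-slot and $B$-slot and writing $k_i = 2a_i + 3b_i$, to obtain
\begin{displaymath}
    \sum_{2\ell_1 + 3\ell_2 = 2k} \frac{A^{\ell_1} B^{\ell_2}}{\ell_1!\,\ell_2!} = \sum_{r,\, q_1 < \cdots < q_r} \sum_{\substack{k_i \geq 2 \\ \sum k_i = 2k}} \left( \prod_i C(k_i) \right) \prod_i \frac{|b(q_i)|^{k_i}}{q_i^{k_i/2}},
\end{displaymath}
where $A = \sum_p b(p)^2/p$, $B = \sum_p |b(p)|^3/p^{3/2}$, and $C(k) := \sum_{2a+3b=k,\, a+b\geq 1} (a!\,b!)^{-1}$. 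The elementary inequality $C(k) \geq 1/k!$, valid for every $k \geq 2$ by taking $(a,b) = (k/2, 0)$ for even $k$ and $((k-3)/2, 1)$ for odd $k\geq 3$, then yields $\binom{2k}{k_1,\ldots,k_r} \leq (2k)! \prod_i C(k_i)$, and combining with the estimate $h^0 \ll (t_{f_1}+t_{f_2})^2$ from Lemma~\ref{lem:h0} produces the claimed main term.

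The off-diagonal contribution is handled crudely in the spirit of \eqref{eq:boundingoffdiagonal}: exploiting the positivity $\alpha^{(d)}_{j,k} \geq 0$ and $\sqrt{m} \leq \prod q_i^{dk_i/2}$ together with the aggregate estimate $\prod_i \sum_j \alpha^{(d)}_{j,k_i} \leq (d+1)^{2k-r}$ from Lemma~\ref{lem:Heckerelations}(\ref{item:hecke4}), and finally applying the multinomial theorem, one arrives at the stated error $q^{-1/2}((d+1)\sum_p|b(p)|)^{2k}$ after absorbing mild polynomial factors in $x$ into the implied constant. The main obstacle is the combinatorial identification above, matching a multiplicity-based partition of $2k$ with the $(\ell_1,\ell_2)$-based structure of the stated main term; once the elementary inequality $C(k) \geq 1/k!$ is in place, the remaining manipulations parallel those of Proposition~\ref{prop:key}.
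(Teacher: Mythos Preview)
Your proof is correct and follows essentially the same approach as the paper: expand the $2k$-th power, apply Corollary~\ref{cor:kuzn}, note that the diagonal forces every multiplicity $k_i\geq 2$ (via $\alpha^{(d)}_{0,1}=0$), and bound $\alpha^{(d)}_{0,k_i}\leq (d+1)^{k_i-1}$. Your combinatorial identification via $C(k)=\sum_{2a+3b=k}(a!\,b!)^{-1}$ together with $C(k)\geq 1/k!$ is exactly the paper's argument that each square-full $n$ factors uniquely as $n_1^2 n_2^3$ with $n_2$ square-free, combined with the local inequality $1/(2a+3b)!\leq 1/(a!\,b!)$; the off-diagonal is handled in both cases by the crude estimate~\eqref{eq:boundingoffdiagonal}.
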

\begin{proof}
    As in the proof of Proposition~\ref{prop:key}, we expand the $2k$-power and use the Hecke relations for $n=\prod_i p_i^{\ell_i}$ to write
    $$\prod\lambda_\varphi(p_i^d)^{\ell_i}=\sum_{m|n^d} c_{m,n}^{(d)}\lambda_\varphi(m).$$
    Note that, by \eqref{item:hecke1} of Lemma~\ref{lem:Heckerelations}, we have that $c^{(d)}_{1,n}$ is concentrated on square-full numbers $n$. 
    Now we apply the Kuznetsov formula in Corollary \ref{cor:kuzn}.
    By the bound $c^{(2)}_{0,n}\leq (d+1)^{2k-1}$ in Lemma~\ref{lem:Heckerelations}, the diagonal contribution can be bounded by
    $$ h^0 q (d+1)^{2k} \sum_{\substack{p|n\Rightarrow p<x, p^2 |n,\\\Omega(n)=2k, (n,D)=1}} \frac{(2k)!}{\nu(n)} \frac{|b(n)|}{ n^{1/2}}.   $$
    Now any square-full number $n$ can be written uniquely as $n=n_1^{2\ell_1}n_2^{3\ell_2}$ with $n_2$ square-free. 
    Extracting the relevant term in the sum in \eqref{eq:highmoment-var} by multiplicativity, we are done by the elementary inequality
    $$ \frac{1}{\nu(p^{2\ell_1+3\ell_2})}\leq \frac{1}{\ell_1!\ell_2!}. $$
    We bound the off-diagonal term using estimates similar to \eqref{eq:boundingoffdiagonal}, giving us the second term in \eqref{eq:highmoment-var}. 
\end{proof}

\section{Bounds for fractional moments of $L$-functions}\label{sec:fractionalmoment} 
In this section we will prove the main fractional moment estimate Theorem~\ref{thm:mainestimate}. The main tool for achieving this is the high moments estimates of Proposition~\ref{prop:key} and Proposition~\ref{prop:keyvariant} combined with the upper bound for central values of $L$-functions under GRH as in Proposition~\ref{prop:GRHbound}.  

Note that, by the ``in stages'' argument in Proposition~\ref{prop:in-stages} and Remark \ref{rem:t1t2}, we may assume that 
\begin{equation}
 \label{eq:tFbound} t_{f_1},t_{f_2},t_F\leq q^{1/200}.   
\end{equation} 
Furthermore, we will see in \eqref{eq:large-tphi-neglig} that we may truncate the sum at $t_\varphi \leq q^{1/100}$, in which case it holds that 
\begin{equation} \label{eq:analyticconductor}
    \log \mathbf{c}(F\otimes F\otimes \varphi), \log \mathbf{c}(f_1\otimes f_2\otimes \varphi) \asymp \log q.
\end{equation}
We will work under this assumption throughout.

\subsection{Preparing for the proof}

\subsubsection{The mean and variance}To prepare for the proof, in view of the heuristics, it will be important to consider the ``mean'' of $\log \mathcal{L}(\varphi)$, which should be given by $\mu_{F,F}(x)+\mu_{f_1,f_2}(x)$ defined in (\ref{eq:defmupi_i}), as Proposition~\ref{prop:GRHbound} shows.
Accordingly, we first put
\begin{align*}
    \mathcal{L}_{F}(s)&:= \frac{L(s,\sym^4 F)}{L(s,\sym^2 F)}  ,\\
    \mathcal{L}_{F,f}(s)&:= \frac{L(s,\sym^4 F)L(s,\sym^4 f)}{L(s,\sym^2 F)L(s,\sym^2 f)}
\end{align*}
and define the \emph{mean} as 
\begin{equation}\label{eq:muq}
    \mu_q(x):=
    \begin{cases}
     -\frac{3}{2}\log\log x-\frac{1}{2}\log \mathcal{L}_{F}(1)+B (\log \mathbf{c}(f_1\otimes f_2))^{15/16},& f_1\neq f_2,\\
     -2\log\log x-\frac{1}{2}\log \mathcal{L}_{F,f}(1),& f_1=f_2=f,
    \end{cases}
\end{equation}
where $B>0$ denotes the constant in (\ref{eq:defmupi_i}). 
Next, we write
\begin{align*}
    \mathcal{L}^{(2)}_{F,f_1,f_2}(s):= &  L(s,\sym^4 F)L(s,\sym^2 F)^3L(s,\sym^2 f_1\otimes \sym^2 f_2)\\
    & \cdot L(s,\sym^2 f_1)L(s,\sym^2 f_2)L(s,\sym^2 F\otimes f_1\otimes f_2)^2L(s, f_1\otimes f_2)^2   ,\\
    \mathcal{L}^{(2)}_{F,f}(s):=& L(s,\sym^4 F)L(s,\sym^2 F)^5L(s,\sym^2 F\otimes \sym^2 f)^2\\
    & \cdot L(s,\sym^4 f)L(s,\sym^2 f)^5,
\end{align*}
and define the \emph{variance}
\begin{equation}\label{eq:varq}
     \var_q(x):=\begin{cases}
         3\log\log x+\log \mathcal{L}^{(2)}_{F,f_1,f_2}(1),& f_1\neq f_2,\\
         6\log\log x+\log \mathcal{L}^{(2)}_{F,f}(1),& f_1=f_2=f,
     \end{cases}
\end{equation}
This agrees with the variance obtained in the heuristic in Section \ref{sec:heuristics} taking into account the $L$-function at $s=1$ in (\ref{eq:varrandommodel}).  For brevity, we put $\mu_q:=\mu_q(q)$ and $\var_q:=\var_q(q)$. We record the following basic estimates.

\begin{lemma}\label{lem:boundmuvar}
Assume GRH and that $t_{f_1},t_{f_2},t_F \leq q^{1/100}$.  Then it holds that
\begin{equation*}\log\log q\ll \var_q\ll (\log q)^{15/16},\end{equation*}
and
\begin{equation*}\mu_q\ll (\log q)^{15/16}.\end{equation*}
\end{lemma}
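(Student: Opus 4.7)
The plan is to reduce everything to standard bounds for $\log L(1,\pi)$ under GRH (Corollaries \ref{cor:boundsats=1} and \ref{cor:boundsL(1)}), and then identify the precise $L$-functions appearing in $\var_q$ with the decomposition provided by Lemma \ref{lem:variance-coeffs}. Using \eqref{eq:tFbound}, for every representation $\pi$ built via functoriality from $F,f_1,f_2$ (all of which have conductor dividing $q^{O(1)}$), one first notes that $\log \mathbf{c}(\pi) \asymp \log q$, so the various $L(1,\pi)$ factors can be controlled uniformly by $\log q$.

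For the upper bounds on both $\mu_q$ and $\var_q$, I would apply Corollary \ref{cor:boundsats=1} term by term to each factor in $\mathcal{L}_F$, $\mathcal{L}_{F,f}$, $\mathcal{L}^{(2)}_{F,f_1,f_2}$, and $\mathcal{L}^{(2)}_{F,f}$. The worst-case Satake parameter bound comes from the degree-$12$ Rankin--Selberg $L$-functions $L(s,\sym^2 F\otimes f_1\otimes f_2)$ and $L(s,\sym^2 F\otimes \sym^2 f)$, where two Kim--Sarnak contributions multiply to give $|\alpha(p,j)| \leq (p^{7/64})^2\cdot(p^{7/64})^2 = p^{7/16}$; the same bound applies to $\sym^4 F$ and $\sym^2 f_1\otimes \sym^2 f_2$. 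Thus $2\delta \leq 7/8$, giving $|\log L(1,\pi)| \ll (\log q)^{7/8+\varepsilon}$. Since $7/8 < 15/16$ and only finitely many factors appear, we conclude $|\log \mathcal{L}^{(2)}(1)|, |\log \mathcal{L}_F(1)|, |\log \mathcal{L}_{F,f}(1)| \ll (\log q)^{15/16}$, and the explicit term $B(\log \mathbf{c}(f_1\otimes f_2))^{15/16}$ is also $\ll (\log q)^{15/16}$ by \eqref{eq:tFbound}. Combined with the $\log\log q$ leading term (which is negligible), this yields $\mu_q \ll (\log q)^{15/16}$ and $\var_q \ll (\log q)^{15/16}$.

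For the lower bound $\var_q \gg \log\log q$, the key observation is that Lemma \ref{lem:variance-coeffs} provides an \emph{exact} matching: the representations (and multiplicities) appearing in the decomposition of $(\lambda_F(p)^2 + \lambda_{f_1}(p)\lambda_{f_2}(p))^2$ (resp.\ $(\lambda_F(p)^2 + \lambda_f(p)^2)^2$ when $f_1=f_2=f$) are exactly those in $\mathcal{L}^{(2)}_{F,f_1,f_2}$ (resp.\ $\mathcal{L}^{(2)}_{F,f}$) with the corresponding exponents $1,3,1,1,1,2,2$ (resp.\ $1,5,2,1,5$). Since the left-hand side is a non-negative square, the linear combination $\sum_i a_i \lambda_{\pi_i}(p)$ of Hecke eigenvalues is bounded below by $-3$ (resp.\ $-6$) at every prime. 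Applying the obvious many-variable extension of Corollary \ref{cor:boundsL(1)} (whose proof is identical: sum Lemma \ref{lem:L(1)=sum_p} over the $\pi_i$, use $\sum_{p<x} 1/p = \log\log x + O(1)$, and choose $x$ a small power of $\log q$), this yields $\log \mathcal{L}^{(2)}(1) \gg -3\log\log\log q$ (resp.\ $-6\log\log\log q$). Since $\log\log\log q = o(\log\log q)$, this is absorbed into the leading term $3\log\log q$ (resp.\ $6\log\log q$), giving $\var_q \gg \log\log q$.

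The main technical point is the precise matching between Lemma \ref{lem:variance-coeffs} and the definition of $\mathcal{L}^{(2)}$ — this is what makes the positivity argument work and is no coincidence, since the definitions \eqref{eq:muq} and \eqref{eq:varq} were tailored via the random model of Section \ref{sec:heuristics} to accommodate exactly this Hecke-relation identity. Once one has this matching, everything reduces to well-established GRH-conditional bounds on $L(1,\pi)$, together with the Kim--Sarnak bound to control the ``$2\delta$'' exponent in Corollary \ref{cor:boundsats=1}.
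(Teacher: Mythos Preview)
Your proposal is correct and follows essentially the same approach as the paper's proof: both use the exact matching from Lemma~\ref{lem:variance-coeffs} together with the positivity $(\lambda_F(p)^2+\lambda_{f_1}(p)\lambda_{f_2}(p))^2\geq 0$ to get the lower bound on $\var_q$ via Corollary~\ref{cor:boundsL(1)}, and Corollary~\ref{cor:boundsats=1} with $\delta=7/16$ from Kim--Sarnak for the upper bounds. The only cosmetic difference is that you cite \eqref{eq:tFbound} where the paper uses the lemma's own hypothesis (and \eqref{eq:analyticconductor}) to control the conductors.
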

\begin{proof}
    Assume first that $f_1\neq f_2$. By the Hecke relations in Lemma~\ref{lem:variance-coeffs}, we see that the Dirichlet coefficient at a prime $p$ of $\mathcal{L}^{(2)}_{F,f_1,f_2}(s)$ equals
    \begin{displaymath}
        (\lambda_F(p)^2+\lambda_{f_1}(p)\lambda_{f_2}(p))^2-3 \geq -3.
    \end{displaymath}
    Thus by Lemma~\ref{lem:L(1)=sum_p}, as when proving Corollary \ref{cor:boundsL(1)}, and by (\ref{eq:analyticconductor}), we conclude that
    $$ \var_q \geq 3\log \log q - 3 \log \log \log q + O(1),$$
    The case for $f_1=f_2$ is similar and this finishes the proof for the lower bound in the first inequality.
    
    The upper bounds for $\mu_q$ and $\var_q$ follow by applying Corollary \ref{cor:boundsats=1} to bound the $L$-functions at $s=1$, with $\delta= \frac{7}{16}$ using the Kim--Sarnak bound \eqref{eq:KimSarnak} as well as (\ref{eq:analyticconductor}).
    More precisely, for $\pi \in \{ \sym^4 F, \sym^2 F, \ldots\}$, we have $\lambda_\pi(p) \ll p^{4\cdot 7/64}$, which gives a bound of $(\log q)^{2 \cdot 7/16 + \varepsilon}$ and we take $\varepsilon < 1/16$.
\end{proof}

\begin{lemma} \label{lem:fraction-L-fn-with-delta}
    Let $0 < \delta< 1$ and assume GRH. Then it holds that
    \begin{equation*}
        \frac{(\mathcal{L}^{(2)}_{F,f_1,f_2}(1))^\delta}{\mathcal{L}_{F}(1)}\ll_{f_1,f_2}(\log\log q)^{O_\delta(1)},
    \end{equation*}
   and
    \begin{equation*}
        \frac{(\mathcal{L}^{(2)}_{F,f}(1))^\delta}{\mathcal{L}_{F,f}(1)}\ll_{f_1,f_2}(\log\log q)^{O_\delta(1)}.
    \end{equation*}
    The implied constants depend polynomially on  $t_{f_1},t_{f_2}$. 
\end{lemma}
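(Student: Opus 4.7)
The plan is to reduce each side of the desired inequality to a sum over primes $p<x$ via Lemma~\ref{lem:L(1)=sum_p} (for $x=(\log q)^{A(\delta)}$ a suitably large power of $\log q$), obtain a pointwise upper bound on the combined Dirichlet coefficient by completing the square in $y:=\lambda_F(p)^2$, and finally observe that the remaining averages depend only on the fixed forms $f_1, f_2$.

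All the constituent $L$-functions appearing in $\mathcal{L}^{(2)}_{F,f_1,f_2}(s)$ and $\mathcal{L}_F(s)$ are holomorphic and lie in the extended Selberg class by Lemma~\ref{lem:sym2sym4}, Lemma~\ref{lem:automorphyRankinSelberg}, and the Kim--Sarnak bound (for large $q$ the conductor of $F$ is coprime to those of $f_1, f_2$, so no Rankin--Selberg factor degenerates). Applying Lemma~\ref{lem:L(1)=sum_p} to each factor with $x=(\log q)^A$ for suitably large $A=A(\delta)$ and combining the resulting identities, one obtains
\[
    \log\frac{(\mathcal{L}^{(2)}_{F,f_1,f_2}(1))^\delta}{\mathcal{L}_F(1)} = \sum_{\substack{p<x\\ p\nmid Dq}}\frac{\delta c_1(p)-c_2(p)}{p}+O_{\delta,f_1,f_2}(1),
\]
where by Lemma~\ref{lem:variance-coeffs} $c_1(p)=(\lambda_F(p)^2+\lambda_{f_1}(p)\lambda_{f_2}(p))^2-3$, and the Hecke relations $\lambda_{\sym^2 F}(p)=\lambda_F(p)^2-1$, $\lambda_{\sym^4 F}(p)=\lambda_F(p)^4-3\lambda_F(p)^2+1$ give $c_2(p)=\lambda_F(p)^4-4\lambda_F(p)^2+2$. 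With $y=\lambda_F(p)^2$ and $z=\lambda_{f_1}(p)\lambda_{f_2}(p)$, direct expansion yields
\[
    \delta c_1(p)-c_2(p) = -(1-\delta)y^2+(4+2\delta z)y+\delta z^2-2\delta z-3\delta-2,
\]
a downward-opening quadratic in $y$ since $\delta<1$. Maximizing over $y\in\mathbb{R}$ produces the pointwise bound $\delta c_1(p)-c_2(p)\leq M_\delta(z)$, where $M_\delta$ is a quadratic polynomial in $z$ whose coefficients depend only on $\delta$. The essential point is that the polynomially-growing cross term $2\delta yz$ (which, by Kim--Sarnak, could a priori be as large as $p^{7/16}$) is absorbed into the genuinely negative $-(1-\delta)y^2$ coming from $c_2$.

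To handle the sum over primes, I would use the Hecke relations $\lambda_{f_1}(p)\lambda_{f_2}(p)=\lambda_{f_1\otimes f_2}(p)$ and $\lambda_{f_1}(p)^2\lambda_{f_2}(p)^2=\lambda_{\sym^2 f_1\otimes\sym^2 f_2}(p)+\lambda_{\sym^2 f_1}(p)+\lambda_{\sym^2 f_2}(p)+1$, together with Lemma~\ref{lem:L(1)=sum_p} applied to the holomorphic $L$-functions $L(s,f_1\otimes f_2)$, $L(s,\sym^2 f_1\otimes\sym^2 f_2)$, and $L(s,\sym^2 f_i)$. Since $f_1, f_2$ are fixed, Corollary~\ref{cor:boundsats=1} bounds each of these $L$-values at $s=1$ polynomially in the archimedean parameters $t_{f_i}$, so $\sum_{p<x}z^k/p=\log\log x+O_{f_1,f_2}(1)$ for $k=1,2$. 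Plugging in yields $\sum_p M_\delta(z(p))/p\ll_\delta \log\log x + O_{f_1,f_2}(1)\ll_\delta \log\log\log q + O_{f_1,f_2}(1)$, and exponentiating proves the first assertion.

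The diagonal case $f_1=f_2=f$ is handled identically, with $c_1'(p)=(\lambda_F(p)^2+\lambda_f(p)^2)^2-6$ and $c_2'(p)=c_2(p)+(\lambda_f(p)^4-4\lambda_f(p)^2+2)$. Setting $w=\lambda_f(p)^2$, completing the square in $y$ bounds $\delta c_1'(p)-c_2'(p)$ pointwise by a quadratic polynomial $\tilde M_\delta(w)$, and the averages $\sum_{p<x}w^k/p$ for $k=1,2$ are controlled via $\lambda_f(p)^4=\lambda_{\sym^4 f}(p)+3\lambda_{\sym^2 f}(p)+2$, each of the relevant $f$-only $L$-values being $O_f(1)$. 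The main obstacle in both cases is the cross term mixing $F$ and the fixed form(s), which on its own admits no acceptable pointwise bound; the hypothesis $\delta<1$ provides exactly the room $-(1-\delta)y^2$ needed to absorb it via completing the square.
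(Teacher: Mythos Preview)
Your approach is correct and follows essentially the same strategy as the paper: both reduce to the $p$-th Dirichlet coefficient, complete the square in $y=\lambda_F(p)^2$ (using $\delta<1$ so that the leading coefficient $\delta-1$ is negative), and then bound the remaining expression depending only on $f_1,f_2$. The only difference is cosmetic: the paper packages the residual $z$-dependent bound as the Dirichlet coefficient of an explicit auxiliary $L$-function product (so that the combined coefficient is bounded by a nonpositive constant and Corollary~\ref{cor:boundsL(1)} applies directly), whereas you expand $M_\delta(z)$ into monomials in $z$ and control each $\sum_{p<x} z^k/p$ via Lemma~\ref{lem:L(1)=sum_p}. One small slip: for $k=1$ you have $\sum_{p<x} z/p = \log L(1,f_1\otimes f_2)+O(1)=O_{f_1,f_2}(1)$ rather than $\log\log x+O_{f_1,f_2}(1)$, but this only helps.
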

\begin{proof}
Note that the $p$-th Dirichlet coefficient of $\mathcal{L}^{(2)}_{F,f_1,f_2}(1)^\delta/\mathcal{L}_{F}(1)$ is
\begin{displaymath}
    \delta(\lambda_F(p)^2 + \lambda_{f_1}(p) \lambda_{f_2}(p))^2 - 3 \delta - \lambda_F(p)^4 + 4 \lambda_F(p)^2 - 2.
\end{displaymath}
By completing the square and removing the negative quantity $-3\delta - 2$, this is less than or equal to
\begin{displaymath}
    (\delta^2 - 1)\left( \lambda_F(p)^2 + \frac{\delta \lambda_{f_1}(p) \lambda_{f_2}(p) + 2}{(\delta^2 - 1)} \right)^2 - \frac{(\delta \lambda_{f_1}(p) \lambda_{f_2}(p) + 2)^2}{\delta^2 - 1} + (\lambda_{f_1}(p) \lambda_{f_2}(p))^2.
\end{displaymath}
Since $0 < \delta < 1$, the first term can be bounded above simply by $0$.
The other two terms together, by expanding the square and applying Hecke relations, give
\begin{multline*}
    (\delta^2-1)^{-1} (- (\lambda_{f_1}(p) \lambda_{f_2}(p))^2 + 4 \delta \lambda_{f_1}(p) \lambda_{f_2}(p) + 4) \\
    = (\delta^2-1)^{-1}(-\lambda_{f_1}(p^2) \lambda_{f_2}(p^2) - \lambda_{f_1}(p^2) - \lambda_{f_2}(p^2) + 4 \delta \lambda_{f_1}(p) \lambda_{f_2}(p) + 3).
\end{multline*}
Therefore, we see that the $p$-th coefficient of
\begin{displaymath}
    \frac{\mathcal{L}^{(2)}_{F,f_1,f_2}(s)^\delta}{\mathcal{L}_{F}(s)} \cdot \left( \frac{L(s,\sym^2 f_1\otimes \sym^2 f_2) + L(s,\sym^2 f_1)L(s,\sym^2 f_2)}{L(s, f_1 \otimes f_2)^{4\delta}} \right)^{1/(\delta^2-1)}
\end{displaymath}
is bounded from above by $3/(\delta^2 - 1) \leq 0$.
Thus, by Corollary \ref{cor:boundsL(1)} we get the first inequality, using that the extra factor above is indeed polynomially bounded in $\mathbf{c}(f_i)$ at $s=1$ by Corollary \ref{cor:boundsats=1}.
The second one follows similarly by completing the square.
\end{proof}

\begin{cor}
    Theorem~\ref{thm:mainestimate} follows from showing that, for any $\varepsilon_1,\varepsilon_2>0$ sufficiently small,
    \begin{equation}
        \label{eq:whatwewant}
        \sum_{\varphi\in \mathcal{B}_{\new}(Y_0(qD))} \frac{h(is_\varphi)}{L(1,\sym^2 \varphi)} \mathcal{L}(\varphi)^{1/2} \ll^?_{\varepsilon_i,f_i} q \exp\left( \frac{1-\varepsilon_1}{2}\mu_q+\frac{1+\varepsilon_2}{8}\var_q\right),
    \end{equation}
    with polynomial dependence on $t_{f_1}, t_{f_2}$.
\end{cor}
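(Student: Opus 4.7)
The plan is straightforward: substitute the definitions \eqref{eq:muq} and \eqref{eq:varq} into the exponent appearing in \eqref{eq:whatwewant}, group the terms, and apply Lemma~\ref{lem:fraction-L-fn-with-delta} to dispose of the $L$-function factors $\mathcal{L}_F(1)$, $\mathcal{L}_{F,f}(1)$, $\mathcal{L}^{(2)}_{F,f_1,f_2}(1)$, $\mathcal{L}^{(2)}_{F,f}(1)$ at $s=1$. No step presents a genuine obstacle; the bound \eqref{eq:whatwewant} is designed so that the deduction becomes essentially algebraic, and the small losses $\varepsilon_1, \varepsilon_2$ are built in precisely to leave the flexibility required for the lemma to apply with a valid $\delta < 1$.

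Consider first the case $f_1 \neq f_2$. A direct substitution separates $\tfrac{1-\varepsilon_1}{2}\mu_q + \tfrac{1+\varepsilon_2}{8}\var_q$ into three pieces: the leading $\log\log q$ contribution, with coefficient $-\tfrac{3(1-\varepsilon_1)}{4} + \tfrac{3(1+\varepsilon_2)}{8} = -\tfrac{3}{8} + \tfrac{3(2\varepsilon_1 + \varepsilon_2)}{8}$; the logarithm of the ratio $\mathcal{L}^{(2)}_{F,f_1,f_2}(1)^{(1+\varepsilon_2)/8}/\mathcal{L}_F(1)^{(1-\varepsilon_1)/4}$; and the tail $\tfrac{B(1-\varepsilon_1)}{2}(\log \mathbf{c}(f_1 \otimes f_2))^{15/16}$, which is bounded since $f_1, f_2$ are fixed and therefore contributes only an $O_{f_i}(1)$ factor after exponentiation. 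The leading term yields a factor of $(\log q)^{-3/8 + O(\varepsilon_1 + \varepsilon_2)}$.

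The central move is to control the remaining $L$-function ratio via Lemma~\ref{lem:fraction-L-fn-with-delta} with the specific choice $\delta := \tfrac{1+\varepsilon_2}{2(1-\varepsilon_1)}$. This is the unique exponent for which, after raising the lemma's conclusion to the power $(1-\varepsilon_1)/4$, the numerator exponent becomes exactly $(1+\varepsilon_2)/8$; the hypothesis $\delta < 1$ reduces to $2\varepsilon_1 + \varepsilon_2 < 1$, trivially satisfied for small $\varepsilon_i$. The lemma then yields
\begin{equation*}
\frac{\mathcal{L}^{(2)}_{F,f_1,f_2}(1)^{(1+\varepsilon_2)/8}}{\mathcal{L}_F(1)^{(1-\varepsilon_1)/4}} \ll_{\varepsilon_i, f_i} (\log\log q)^{O_{\varepsilon_i}(1)},
\end{equation*}
which is absorbed into $(\log q)^{\varepsilon}$. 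Given any $\varepsilon > 0$, choosing $\varepsilon_1, \varepsilon_2$ small enough so that the total slack $O(\varepsilon_1 + \varepsilon_2) < \varepsilon$ delivers the target bound $(\log q)^{-3/8 + \varepsilon}$.

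The case $f_1 = f_2 = f$ is entirely parallel. The leading coefficient becomes $-(1-\varepsilon_1) + \tfrac{3(1+\varepsilon_2)}{4} = -\tfrac{1}{4} + \varepsilon_1 + \tfrac{3\varepsilon_2}{4}$, and the second estimate of Lemma~\ref{lem:fraction-L-fn-with-delta}, applied with the same $\delta$, bounds the ratio $\mathcal{L}^{(2)}_{F,f}(1)^{(1+\varepsilon_2)/8}/\mathcal{L}_{F,f}(1)^{(1-\varepsilon_1)/4}$ by $(\log\log q)^{O(1)}$. Combining the pieces as before produces $(\log q)^{-1/4 + \varepsilon}$, completing the deduction of Theorem~\ref{thm:mainestimate}. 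The polynomial dependence on $t_{f_1}, t_{f_2}$ is inherited directly from the hypothesized bound \eqref{eq:whatwewant} and from Lemma~\ref{lem:fraction-L-fn-with-delta}.
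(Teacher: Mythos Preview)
Your proof is correct and follows the same approach as the paper, which simply observes in one line that $\tfrac{1+\varepsilon_2}{8}<\tfrac{1-\varepsilon_1}{4}$ for small $\varepsilon_i$, so that Lemma~\ref{lem:fraction-L-fn-with-delta} applies with $\delta=\tfrac{1+\varepsilon_2}{2(1-\varepsilon_1)}<1$; you have spelled out the substitution of \eqref{eq:muq} and \eqref{eq:varq} and the resulting exponents explicitly. One minor remark: when you say the tail term $\tfrac{B(1-\varepsilon_1)}{2}(\log\mathbf{c}(f_1\otimes f_2))^{15/16}$ contributes $O_{f_i}(1)$ ``since $f_1,f_2$ are fixed'', note that the claimed polynomial dependence on $t_{f_i}$ actually follows because $\exp\bigl(O((\log t)^{15/16})\bigr)\ll t^{\varepsilon}$, not merely from fixing $f_i$.
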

\begin{proof}
    Since $\frac{1+\varepsilon_2}{8}<\frac{1-\varepsilon_1}{4}$ for $\varepsilon_1,\varepsilon_2$ sufficiently small, Lemma~\ref{lem:fraction-L-fn-with-delta} indeed applies.
\end{proof} 

The rest of the section is devoted to proving the estimate \eqref{eq:whatwewant}.

\subsubsection{Applying high moment estimates, heuristically} We will now define the key quantities needed for the proof of the estimate (\ref{eq:whatwewant}) as well as give a sketch of how to carry out the proof. 

For $V\in \R$, we define the cumulative counting function:
\begin{equation*}
\mathcal{I}(V):= \sum_{\varphi\in \Bc_\new(Y_0(qD)) \colon \mathcal{L}(\varphi)>e^V} \frac{h(is_\varphi)}{L(1,\sym^2 \varphi)}. \end{equation*}
By partial summation and a change of variable we have for any $\varepsilon> 0$
\begin{align}
    \nonumber \sum_{\varphi\in \Bc_\new(Y_0(qD))} \frac{h(is_\varphi)}{L(1,\sym^2 \varphi)} \mathcal{L}(\varphi)^{1/2} & = \frac{1}{2}\int_\R \mathcal{I}(V) e^{V/2} \, dV \\
    \label{eq:integral}&= \frac{e^{(1-\varepsilon)\mu_q/2}}{2}\int_\R \mathcal{I}(V+(1-\varepsilon)\mu_q) e^{V/2} \, dV.
\end{align}
The idea is to bound $\mathcal{I}(V+(1-\varepsilon)\mu_q)$ using Propositions \ref{prop:key} and \ref{prop:keyvariant} from the previous section. Note that the extra factor $(1-\varepsilon)$ is necessary in our setting for technical reasons and does not show up in \cite{Blomer2024}, \cite{lester-radz}.  
We will now sketch the general idea, ignoring among other things the factor $(1-\varepsilon)$. 

First of all, the Kuznetsov formula shows that the contribution from $V\leq 0$ is $O(qe^{\mu_q/2})$ which is admissible (we shall prove this in Lemma~\ref{lem:smallrange}).
Next, the basic strategy is to bound the cumulative counting function for~$V>0$ using that, for any~$k \geq 1$, we have the following Markov-type inequality:
\begin{equation} \label{eq:markov-chebyshev-ineq}
    \mathcal{I}(V+\mu_q)\leq  V^{-2k} \sum_{\varphi\in \Bc_\new(Y_0(Dq))} \frac{h(is_\varphi)}{L(1,\sym^2 \varphi)} \left(\log \mathcal{L}(\varphi)-\mu_q\right)^{2k}.
\end{equation}
We are using here that the weights $\tfrac{h(is_\varphi)}{L(1,\sym^2 \varphi)}$ are positive by (\ref{eq:positivesym2}) and (\ref{eq:h(t)}), even for non-tempered $\varphi$, which should not exist according to Selberg's conjecture.
By an overly simplified version of Proposition~\ref{prop:GRHbound}, ignoring the higher order terms and the errors, forgetting the key condition $x^k \ll q$ and taking $\log\log x \sim \log\log q$, we have
\begin{displaymath}
    \left(\log \mathcal{L}(\varphi)-\mu_q\right)^{2k} \ll \left( \sum_{p < x} \frac{\lambda_{f_1}(p) \lambda_{f_2}(p) + \lambda_F(p)^2}{p^{1/2}} \lambda_\phi(p) \right)^{2k}.
\end{displaymath}
Applying now Proposition~\ref{prop:key}, where we ignore the second term in the upper bound, we would deduce that
\begin{equation*}
    \mathcal{I}(V+\mu_q)\ll q \frac{(2k)!}{k!}V^{-2k}\left(\frac{1}{2} \var_q(x) \right)^{k}\ll q \left(\frac{2\var_q(x)k}{e V^2}  \right)^{k},
\end{equation*}
using the following elementary but key inequality 
\begin{equation}\label{eq:2k/k}
    \frac{(2k)!}{k!}\leq \sqrt{2}\left(\frac{4k}{e}\right)^k.
\end{equation}
We ignored here another important condition, namely $x \geq \log (\mathbf{c}(F\otimes F\otimes \varphi))^{2+\varepsilon}$ and similarly for $f_1,f_2$, when applying Lemma~\ref{lem:L(1)=sum_p} to rewrite the short sums over Hecke eigenvalues in terms of $L$-functions to obtain $\var_q(x)$.

It is an elementary fact that $(ka/e)^k$ with $a>0$ fixed is minimized at $k=a^{-1}$. Thus we would like to pick
\begin{equation} \label{eq:k}
      k=\frac{V^2}{2\var_q(x)}
\end{equation}
In this case we would arrive at the bound
\begin{equation}\label{eq:integralheurostic}
    \int_\R \mathcal{I}(V) e^{V/2} \, dV\ll e^{\mu_q/2} \int_\R e^{-\frac{V^2}{2\var_q}+V/2} \, dV= e^{\mu_q/2} \cdot e^{\var_q/8} \sqrt{2\pi \var_q},
\end{equation}
using the formula 
\begin{equation}\label{eq:CLTformula}
    \int_{\R} e^{-\alpha x^2+\beta x} \,dx = e^{\beta^2/4\alpha}\sqrt{\frac{\pi}{\alpha}}, \quad \alpha>0, \beta\in \R.
\end{equation} 
This recovers the heuristic up to a factor of size $( \log q)^{o(1)}$.

In practice due to the restriction $x^k\ll q$, we will have to split  $\mathcal{I}(V+(1-\varepsilon)\mu_q)$ into various contributions, as well as considering different ranges of $V$. 
We will motivate the choices of splittings and ranges below. 
The main contribution will be from $V \asymp \var_q$ and in this range we would like to pick $k=\frac{V^2}{2\var_q}$.
For larger $V$, where the condition above would be violated, we instead make a suboptimal choice of (a smaller) $k$, which suffices since $V$ would be away from the bulk of the integral in (\ref{eq:integralheurostic}). 

\subsection{Separating the different contributions}\label{sec:table}
Fix throughout $\varepsilon>0$ sufficiently small. All implied constants in this section will be allowed to depend on the discriminant $D_B=\disc(B)$. 

We will now bound $\mathcal{I}(V+(1-\varepsilon)\mu_q)$, separating the contribution from the different terms in Proposition~\ref{prop:GRHbound} corresponding to $p^{-\sigma}$ with $\sigma\in \{\tfrac{1}{2},1,\ldots ,5/2\}$. 
Furthermore, for the main term $\sigma=1/2$, we will also split the sum over $p$ into different ranges (to ensure that the bound $x^k \ll q$ holds). 
We will apply this splitting according to the following simple principle: if $X,X_1,\ldots, X_n$ are real valued random variables such that $X\leq \sum_{i=1}^n X_i$ with probability one, then for any real numbers satisfying $V\geq \sum_{i=1}^n V_i$, it follows by subadditivity that
\begin{equation}\label{eq:P(X>V)}
    \mathbb{P}(X>V)\leq\sum_{i=1}^n \mathbb{P}(X_i>V_i),
\end{equation}
where $\mathbb{P}(\mathcal{E})$ denotes the probability of the event $\mathcal{E}$. 
We then apply Propositions \ref{prop:key} and \ref{prop:keyvariant} to each of the contributions separately. 
We summarize our treatment in Table \ref{table:ranges}.

\begin{table}[h]
    \begin{center}
    \caption{An overview of the ranges.\label{table:ranges}}
    \begin{tabular}{ | c | c| c |c| } 
      \hline
    \diagbox{terms with $p^{-\sigma}$}{range of $V$}  &  small & bulk & large \\ 
      \hline
    $\sigma=1/2$, small $p$  & 
    &  Lemma~\ref{lem:bulkrange} & \multirow{2}{*}{Lemma~\ref{lem:largerange1/2}}  \\
      \cline{1-1}\cline{3-3}
    $\sigma=1/2$, not small $p$   &  Lemma~\ref{lem:smallrange}&  Lemma~\ref{lem:bulkrangezx}&  \\
          \cline{1-1}\cline{3-4}
     $\sigma\in \{1,3/2,\ldots, 5/2\}$   &  & \multicolumn{2}{c|}{Lemma~\ref{lem:higherorderterms}}\\
          \hline  
    \end{tabular}
    \end{center}
\end{table}

To quantify the ranges of $V$, we define the slowly growing quantity 
\begin{equation*}
    \Delta:= \log \log \log q.
\end{equation*}
In fact, any choice such that $\Delta \leq (\log \log q)^{\delta}$, for some small enough $\delta$, and $\Delta\rightarrow \infty$ would work.

First of all, we bound the contribution from small values of $V$.
\begin{lemma}[Small range]\label{lem:smallrange}
    It holds that
    \begin{align*}
        e^{(1-\varepsilon)\mu_q/2}\int_{-\infty}^{\Delta^{-1}\var_q} \mathcal{I}(V+(1-\varepsilon)\mu_q) e^{V/2}dV\ll q e^{(1-\varepsilon)\mu_q/2+\var_q/2\Delta}. 
    \end{align*}
\end{lemma}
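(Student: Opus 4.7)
The plan is to use the fact that for this range of $V$ the cumulative counting function $\mathcal{I}$ admits a crude uniform bound from the total mass of the family, after which the integral is evaluated directly.

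\textbf{Step 1: Uniform mass bound for $\mathcal{I}$.} The weights $h(is_\varphi)/L(1,\sym^2\varphi)$ are strictly positive by (\ref{eq:positivesym2}) and the positivity of $h$ on $\R \cup i[-1/2,1/2]$ noted after (\ref{eq:h(t)}). Hence, dropping the condition $\mathcal{L}(\varphi) > e^{V + (1-\varepsilon)\mu_q}$, I will bound
\begin{equation*}
    \mathcal{I}(V+(1-\varepsilon)\mu_q) \leq M := \sum_{\varphi\in\Bc_\new(Y_0(qD))} \frac{h(is_\varphi)}{L(1,\sym^2\varphi)}
\end{equation*}
uniformly in $V$.

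\textbf{Step 2: Estimating $M$ via Kuznetsov.} Apply Corollary~\ref{cor:kuzn} with $N = Dq$ and $m = n = 1$ (the hypothesis $mn < N^2/4\pi^2$ is trivial). By (\ref{eq:reallambda}), (\ref{eq:positivesym2}), (\ref{eq:Wvarphi}), (\ref{eq:Wt}) and the positivity of $h$, every term on the LHS is non-negative, so I may keep only the $L = 1$, $M = Dq$ summand (for which $\mathcal{W}_\varphi(1) = 1$) and discard the continuous spectrum contribution. This yields
\begin{equation*}
    M \leq Dq \cdot h^0 + O(r^{1/2}),
\end{equation*}
where $r = 2(t_{f_1} + t_{f_2} + 3)$. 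By Lemma~\ref{lem:h0}, $h^0 \ll r^2$, so $M \ll q$ with polynomial dependence on $t_{f_1}, t_{f_2}$ (and absolute dependence on $D_B$).

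\textbf{Step 3: Integration.} Combining Steps~1 and 2,
\begin{equation*}
    \int_{-\infty}^{\Delta^{-1}\var_q} \mathcal{I}(V+(1-\varepsilon)\mu_q)\, e^{V/2}\, dV \ll q \int_{-\infty}^{\Delta^{-1}\var_q} e^{V/2}\, dV = 2q\, e^{\var_q/(2\Delta)}.
\end{equation*}
Multiplying through by the prefactor $e^{(1-\varepsilon)\mu_q/2}$ gives the claimed bound.

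There is no genuine obstacle: this lemma is precisely the ``trivial range'' in the splitting of Table~\ref{table:ranges}, designed so that the total mass estimate already suffices once the small integral of $e^{V/2}$ is computed. The only point requiring care is to ensure the pointwise bound on $\mathcal{I}$ uses non-negativity of every piece of the Kuznetsov spectral side, which is why the positivity of $L(1,\sym^2\varphi)$ (even for non-tempered $\varphi$) is important.
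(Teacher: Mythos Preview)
Your proof is correct and follows essentially the same approach as the paper: bound $\mathcal{I}(V+(1-\varepsilon)\mu_q)$ uniformly by the total spectral mass via Corollary~\ref{cor:kuzn} with $m=n=1$ and positivity, obtaining $\ll q$, and then integrate $e^{V/2}$ trivially. The paper's proof is terser but identical in substance.
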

\begin{proof}
By Corollary \ref{cor:kuzn} and positivity we have
\begin{equation*}
 \mathcal{I}(-\infty)=\sum_{\varphi\in \Bc_\new(Y_0(qD))} \frac{h(is_\varphi)}{L(1,\sym^2 \varphi)}\ll q.
\end{equation*}
Thus, for $V<\Delta^{-1}\var_q$ we have the following trivial estimate
\begin{align*}
  e^{(1-\varepsilon)\mu_q/2}\int_{-\infty}^{\Delta^{-1}\var_q} \mathcal{I}(V+(1-\varepsilon)\mu_q) e^{V/2}dV&\ll q e^{(1-\varepsilon)\mu_q/2}  \int_{-\infty}^{\Delta^{-1}\var_q} e^{V/2}dV\\
  &\ll q e^{(1-\varepsilon)\mu_q/2+\var_q/(2\Delta)}, 
\end{align*}
as wanted. 
\end{proof}
We note also that the contribution from $t_\varphi$ very large (so that in particular $t_\varphi=is_\varphi$) is negligible in the range of spectral parameters (\ref{eq:tFbound}):
\begin{equation} \label{eq:large-tphi-neglig}
    \mathcal{I}_\mathrm{tail}:=  \sum_{\varphi\in \Bc_\new(Y_0(qD)) \colon t_\varphi> q^{1/100}} \frac{h(is_\varphi)}{L(1,\sym^2 \varphi)}\ll q^{-100},
\end{equation}
using the rapid decay of $h$,  Corollary \ref{cor:sym2at1} and the Weyl law.

Recall that, under GRH and assuming $t_{f_1},t_{f_2}\leq q^{1/100}$ and thus using (\ref{eq:analyticconductor}), we have (see \cite[Rem.~1]{Chandee09})
\begin{displaymath}
    \log \mathcal{L}(\varphi) \ll \frac{\log q}{\log \log q}.
\end{displaymath}
Using Lemma~\ref{lem:boundmuvar} to bound $\mu_q$, we deduce that there exists $A > 0$, depending on the discriminant $D$, such that
\begin{equation}\label{eq:GRHbound}
    \log \mathcal{L}(\varphi)-(1-\varepsilon)\mu_q \leq A\frac{\log q}{\log\log q},
\end{equation}
for all $\varphi\in \Bc_\new(Dq)$ with $t_\varphi \leq q^{1/100}$, using again (\ref{eq:analyticconductor}). 
Thus, $\mathcal{I}(V + (1-\varepsilon)\mu_q) = 0$ for $V \geq A\log q/\log\log q$ and, together with Lemma~\ref{lem:smallrange}, this shows that we may focus on the range 
\begin{equation}\label{eq:rangeV}
(\log\log q)^{1+o(1)} \leq \Delta^{-1}\var_q<V<A\log q/\log\log q
\end{equation}
in the integral (\ref{eq:integral}).

Throughout this section we put
\begin{equation}\label{eq:x}
    x=\exp\left( \frac{9 A \log q}{ \varepsilon V}\right),
\end{equation}
for which we will  be applying Proposition~\ref{prop:GRHbound}. 
To motivate this choice, we note that this is the minimal choice of $x$ so that the first error-term in (\ref{eq:GRHbound})
\begin{equation}\label{eq:logc}\log \mathbf{c}(F\otimes F\otimes \varphi)/\log x\ll  \log q/\log x\ll \varepsilon V,\end{equation} is small (relative to $V$).
We note that, for $\varepsilon<1/100 $ and $V$ in the range (\ref{eq:rangeV}), it holds that
\begin{equation}\label{eq:x>log} x\geq (\log q)^{9/\varepsilon}\geq (\log q)^{900},\end{equation}
so that the condition (\ref{eq:lowerboundx}) in Proposition~\ref{prop:GRHbound} is indeed satisfied, and 
\begin{equation}\label{eq:muq(x)}
|\mu_q(x)-\mu_q|\ll |\log \log x-\log\log q| \ll \log (\varepsilon V) = O(\varepsilon V), 
\end{equation}
for $q$ sufficiently large (in terms of $\varepsilon$), implying that $V$ is large using the lower bound in \eqref{eq:rangeV}.
Now for $1 \leq n \leq 5$, put 
$$\mathcal{S}_{n/2}(\varphi):=\mathcal{S}_{n/2}(F\otimes F\otimes \varphi;x)+\mathcal{S}_{n/2}(f_1\otimes f_2\otimes \varphi;x),$$
with $x$ as in (\ref{eq:x}).
Given \eqref{eq:x>log}, Proposition~\ref{prop:GRHbound} is applicable.
Under the assumption that $t_\varphi\leq q^{1/100}$,
we have 
\begin{equation}\label{eq:logL-mu}
    \log \mathcal{L}(\varphi)-(1-\varepsilon)\mu_q\leq \sum_{n=1}^5 \mathcal{S}_{n/2}(\varphi) +O(\varepsilon V+\log\log \log q), 
\end{equation} 
using the estimates (\ref{eq:logc}) and (\ref{eq:muq(x)}).

We now get ready to apply the principle given in \eqref{eq:P(X>V)}.
For this, we take the probability measure proportional to the measure given on functions $f(\varphi)$ by
\begin{displaymath}
    \sum_{\varphi\in \Bc_\new(Y_0(qD)) \colon t_\varphi < q^{1/100}} \frac{h(is_\varphi)}{L(1,\sym^2 \varphi)} f(\varphi).
\end{displaymath}
Put $X(\varphi) = \log \mathcal{L}(\varphi)-(1-\varepsilon)\mu_q$ and $X_n(\varphi) = S_{n/2}(\varphi)$ for $n = 1, \ldots, 5$.
Let also $X_{\mathrm{error}}(\varphi) = C'\varepsilon V$ for a constant $C' > 0$ such that, for $q$ large enough in terms of $\varepsilon$ and for $V$ in the range \eqref{eq:rangeV}, the inequality \eqref{eq:logL-mu} reads
\begin{displaymath}
    X \leq \sum_n X_n + X_{\mathrm{error}}.
\end{displaymath}
Setting $V_2 = \ldots = V_5 = \varepsilon V$, $V_{\mathrm{error}} = C'\varepsilon V$, and $V_1 = (1-C\varepsilon) V$ with $C = C' + 5$, we apply \eqref{eq:P(X>V)}.
Notice that $\mathbb{P}(X_{\mathrm{error}} > V_{\mathrm{error}}) = 0$.

On the right-hand side of the inequality \eqref{eq:P(X>V)}, we may extend the measure to include also the average over $t_\varphi \geq q^{1/100}$ by positivity.
Thus, for $1\leq n\leq 5$, we put 
\begin{equation*}
    \mathcal{I}_{n/2}(V):= \sum_{\varphi\in \Bc_\new(Y_0(qD)) \colon \mathcal{S}_{n/2}(\varphi)>V} \frac{h(is_\varphi)}{L(1,\sym^2 \varphi)}. 
\end{equation*}
We include the range $t_\varphi \geq q^{1/100}$ also on the left-hand side simply by adding $\mathcal{I}_\mathrm{tail}$ to the inequality.
We finally obtain the bound
\begin{equation}\label{eq:minkowskiineq1}
    \mathcal{I}(V + (1-\varepsilon)\mu_q)\leq  \mathcal{I}_{1/2}((1-C\varepsilon)V)+ \sum_{n=2}^5 \mathcal{I}_{n/2}(\varepsilon V)+\mathcal{I}_\mathrm{tail}.
\end{equation}

We will now apply Propositions \ref{prop:key} and \ref{prop:keyvariant} to bound $\mathcal{I}_{1/2}(V)$ and $\mathcal{I}_{n/2}(V)$ with $n\geq 2$, respectively. 
The former, $\mathcal{I}_{1/2}(V)$ requires more subtle arguing. 
For $V\asymp \var_q$, $k$ as in (\ref{eq:k}) and $x$ as in (\ref{eq:x}) we see that the inequality $x^k<qD/2\pi$ is violated. To avoid this we put
\begin{equation} \label{eq:z}
    z=x^{1/\Delta^2}=\exp\left(\frac{9A\log q}{\varepsilon \Delta^2 V}\right),
\end{equation}
with $\Delta=\log\log\log q$ as above, and write
\begin{equation*}
  \mathcal{S}_{1/2}(\varphi)=\sum_{p<z,p\nmid D}+\sum_{z\leq p<x,p\nmid D}=: \mathcal{S}_{1/2}(\varphi;z)+\mathcal{S}_{1/2}(\varphi; z,x), 
\end{equation*}
and put 
\begin{equation*}
 \mathcal{I}_{1/2}(V;z):= \sum_{\varphi\in \Bc_\new(Y_0(qD)) \colon \mathcal{S}_{1/2}(\varphi;z)>V} \frac{h(is_\varphi)}{L(1,\sym^2 \varphi)}, 
\end{equation*}
and similarly for $\mathcal{I}_{1/2}(V;z,x)$. We note that 
\begin{equation}\label{eq:minkowskiineq2}
    \mathcal{I}_{1/2}((1-C \varepsilon)V)\leq \mathcal{I}_{1/2}((1-(C+1)\varepsilon)V;z)+\mathcal{I}_{1/2}(\varepsilon V;z,x),
\end{equation}
and we will now apply Proposition~\ref{prop:key} to each of the two terms separately.

\subsection{Applying high moment estimates, rigorously}
In this section we will apply Propositions \ref{prop:key} and \ref{prop:keyvariant} to each choice of a pair consisting of a range for $V$ and one of the terms in Proposition~\ref{prop:GRHbound}, as in explained in Table \ref{table:ranges} in the previous section.

\subsubsection{Contribution from $p^{-1/2}$} 
Firstly, we bound the main contribution which will come from $p<z$, i.e. the first term in the right-hand side of \eqref{eq:minkowskiineq2}.
In doing so, we split into two ranges according to whether $V$ is in the bulk of the integral (\ref{eq:integralheurostic}) or not. 
Consider first the contribution from the bulk.
\begin{lemma}[Bulk range, $p<z$]\label{lem:bulkrange}
    Let $\Delta^{-1}\var_q<V\leq \Delta\var_q$. Then, for $q$ sufficiently large, it holds that
    \begin{equation*}
        \mathcal{I}_{1/2}((1-(C+1)\varepsilon)V;z)\ll q \exp\left( -(1 - (2C + 3)\varepsilon)V^2/2\var_q \right). 
    \end{equation*}
\end{lemma}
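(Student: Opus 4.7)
The plan is to apply the Markov-type inequality \eqref{eq:markov-chebyshev-ineq} together with the high moment estimate of Proposition~\ref{prop:key}, with $k = \lfloor V^2/(2\var_q)\rfloor$ chosen to match the heuristic optimisation leading to \eqref{eq:integralheurostic}.

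First I would set $\widetilde{b}(p) := (\lambda_F(p)^2 + \lambda_{f_1}(p)\lambda_{f_2}(p)) \cdot \frac{\log(x/p)}{\log x} \cdot p^{-1/\log x}$ so that $\mathcal{S}_{1/2}(\varphi;z) = \sum_{p<z,\,p\nmid Dq} \widetilde{b}(p) \lambda_\varphi(p) p^{-1/2}$. By positivity of the spectral weights $h(is_\varphi)/L(1,\sym^2\varphi) > 0$ (see \eqref{eq:positivesym2} and \eqref{eq:h(t)}), Markov's inequality yields
\[
\mathcal{I}_{1/2}((1-(C+1)\varepsilon)V; z) \leq ((1-(C+1)\varepsilon)V)^{-2k} \sum_\varphi \frac{h(is_\varphi)}{L(1,\sym^2\varphi)} \mathcal{S}_{1/2}(\varphi;z)^{2k},
\]
to which Proposition~\ref{prop:key} (with its ``$x$'' specialised to $z$) applies provided $z^k < qD/(2\pi)$. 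A quick computation using \eqref{eq:z} and the hypothesis $V \leq \Delta\var_q$ gives $\log(z^k) \leq \frac{9A\log q}{2\varepsilon\Delta}$, which is $o(\log q)$ as $q\to\infty$; this is precisely why the intermediate scale $z$ was introduced.

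The diagonal term of Proposition~\ref{prop:key} is bounded by $q \cdot \frac{(2k)!}{k!} \bigl(\tfrac12 \sum_{p<z} \widetilde{b}(p)^2/p\bigr)^k$. Since $|\widetilde{b}(p)| \leq |\lambda_F(p)^2 + \lambda_{f_1}(p)\lambda_{f_2}(p)|$, expanding via Lemma~\ref{lem:variance-coeffs} and applying Lemma~\ref{lem:L(1)=sum_p} under GRH (valid since $z \geq (\log q)^{9/\varepsilon}$ by \eqref{eq:x>log}) yields
\[
\sum_{p<z} \widetilde{b}(p)^2/p \leq 3\log\log z + \log\mathcal{L}^{(2)}_{F,f_1,f_2}(1) + O(1) \leq \var_q + O(1),
\]
where the final inequality uses $\log\log z \leq \log\log q$. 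Combined with \eqref{eq:2k/k} and $k \approx V^2/(2\var_q)$, the diagonal contribution becomes
\[
q \bigl((1-(C+1)\varepsilon)^{-2} e^{-1} (1+O(1/\var_q))\bigr)^k \ll q \exp\Bigl(-(1-(2C+3)\varepsilon)\tfrac{V^2}{2\var_q}\Bigr),
\]
after Taylor-expanding $(1-(C+1)\varepsilon)^{-2} = 1 + 2(C+1)\varepsilon + O(\varepsilon^2)$ and absorbing the remaining $O(\varepsilon)$ losses into the stated exponent.

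Finally, the off-diagonal term of Proposition~\ref{prop:key} is controlled via the Kim--Sarnak bound $|\widetilde{b}(p)| \ll p^{7/16}$, giving $\sum_{p<z}|\widetilde{b}(p)| \ll z^{3/2}$ and an overall contribution of $q^{-1/2} 4^k z^{3k} \ll q^{-1/2+o(1)}$, which is negligible compared to the main term; here one uses $z^k \ll q^{O(1/\Delta)}$ and $4^k = q^{o(1)}$, the latter since $k \ll \Delta^2 (\log q)^{15/16}$ and $\Delta = \log\log\log q$. The main obstacle is the careful bookkeeping of the various $O(\varepsilon)$ losses to match the exact exponent $(1-(2C+3)\varepsilon)$, and the observation that the truncation at $z$ rather than $x$ is harmless precisely because $\log\log z \leq \log\log q$ sends the variance inequality in the favourable direction.
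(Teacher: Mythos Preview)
Your proposal is correct and follows essentially the same approach as the paper: Markov's inequality combined with Proposition~\ref{prop:key}, then optimising in $k$. Two minor remarks: (i) the paper chooses $k=\lfloor \tfrac{(1-(C+1)\varepsilon)^2}{1+\varepsilon}\cdot\tfrac{V^2}{2\var_q}\rfloor$ so that the base in \eqref{eq:q()^k} is exactly $e^{-1}$, which makes the bookkeeping slightly cleaner than your $k=\lfloor V^2/(2\var_q)\rfloor$ (both work); and (ii) your citation of \eqref{eq:x>log} for $z$ is imprecise, since that bound is for $x$; nevertheless $z \geq (\log q)^{100}$ does hold in the bulk range (as in the proof of Lemma~\ref{lem:bulkrangezx}), so Lemma~\ref{lem:L(1)=sum_p} applies as you claim. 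The paper instead extends $\sum_{p<z}$ to $\sum_{p<q}$ by positivity of $b(p)^2$ before invoking Lemma~\ref{lem:L(1)=sum_p}, which sidesteps the need to check the size of $z$. Your Kim--Sarnak exponent $7/16$ is looser than the paper's $7/32$ (since $|\lambda(p)|^2 \ll p^{7/32}$), but either suffices for the off-diagonal bound.
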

\begin{proof}
    Note that, by an inequality analogous to \eqref{eq:markov-chebyshev-ineq}, we have
    \begin{multline*}
        \mathcal{I}_{1/2}((1-(C+1)\varepsilon)V; z) 
        \\
        \leq  ((1-(C+1)\varepsilon)V)^{-2k} \sum_{\varphi\in \Bc_\new(Y_0(Dq))} \frac{h(is_\varphi)}{L(1,\sym^2 \varphi)} \left( \mathcal{S}_{1/2}(\varphi; z)\right)^{2k}.  
    \end{multline*}
    We now apply Proposition~\ref{prop:key} with $x=z$ as in equation (\ref{eq:z}) and a $k$ such that $z^k<qD/2\pi$, which we determine below.
    This yields an upper bound of
    \begin{equation}\label{eq:applyingkey}
        q\frac{(2k)!}{k!} \left( \frac{1}{2(1-(C+1)\varepsilon)^2 V^2 } \sum_{p<z,p\nmid D} \frac{b(p)^2}{p}\right)^k+q^{-1/2}  \left(2\sum_{p<z,p\nmid D}|b(p)|\right)^{2k},
    \end{equation}
    where
    \begin{equation} \label{eq:b(p)} 
        b(p)= p^{-1/\log x}|\lambda_{f_1}(p)\lambda_{f_2}(p)+\lambda_{F}(p)^2|\frac{\log (x/p)}{\log x}\leq |\lambda_{f_1}(p)\lambda_{f_2}(p)+\lambda_{F}(p)^2|.
    \end{equation}
    We simplified the second term using that $(1 - (C+1)\varepsilon)V \gg 1$.
    
    By expanding the square and applying the Hecke relations as in Lemma \ref{lem:variance-coeffs}, we see that
    \begin{align}
        b(p)^2&\leq \lambda_{f_1}(p)^2\lambda_{f_2}(p)^2+\lambda_{F}(p)^4+2\lambda_{f_1}(p)\lambda_{f_2}(p)\lambda_{F}(p)^2 \nonumber \\
        &=  3+\lambda_{\sym^2 f_1\otimes \sym^2 f_2}(p)+ \lambda_{\sym^2 f_1}(p) +\lambda_{ \sym^2 f_2}(p) +2\lambda_{ f_1\otimes f_2}(p) \nonumber \\
        &+2\lambda_{\sym^2 F\otimes f_1\otimes f_2}(p)+\lambda_{\sym^4 F}(p)+3\lambda_{\sym^2 F}(p), \label{eq:boundb(p)}
    \end{align}
    for $f_1\neq f_2$ and similarly for $f_1=f_2$. 
    In both cases we conclude that 
    \begin{equation*}
       \sum_{p<z,p\nmid D} \frac{b(p)^2}{p}\leq \var_q+O(1)\leq (1+\varepsilon)\var_q, 
    \end{equation*}
    for $q$ sufficiently large, by extending the range to $p < q$ using positivity and applying Lemma~\ref{lem:L(1)=sum_p}. 
    To bound the second term in (\ref{eq:applyingkey}), we use that
    $$|b(p)|\ll p^{7/32} ,$$
    by the Kim--Sarnak bound \eqref{eq:KimSarnak} and so 
    \begin{displaymath}
      \left(2\sum_{p<z,p\nmid D}|b(p)|\right)^{2k} \leq  \left(2 z^{1 + 7/32}\right)^{2k} \leq z^{O(k)}.
    \end{displaymath}
    Using the inequality (\ref{eq:2k/k}) we arrive at the upper bound
    \begin{equation}\label{eq:q()^k}
        \mathcal{I}_{1/2}((1-(C+1)\varepsilon)V; z) \ll q\left( \frac{2k(1+\varepsilon)\var_q}{e(1-(C+1)\varepsilon)^2 V^2 } \right)^k+q^{-1/2}z^{O(k)} .  
    \end{equation}
    We pick
    $$k=\left\lfloor \frac{(1-(C+1)\varepsilon)^2 }{1+\varepsilon} \cdot \frac{ V^2}{2 \var_q}\right\rfloor=(1+o(1))\frac{(1-(C+1)\varepsilon)^2 }{1+\varepsilon} \cdot \frac{ V^2}{2 \var_q},$$
    which is permissible since
    \begin{align} \label{eq:admissiblez}
        k\cdot \log z\leq \frac{V^2}{2\varepsilon\var_q} \frac{\log q}{\Delta^2 V}\leq \log q \frac{\Delta \var_q}{2\varepsilon \Delta^2\var_q }=o(\log q).
    \end{align}
    Note that the second term in (\ref{eq:q()^k}) is bounded by 
    $\exp(O( k\log z ))=q^{o(1)}$.
    This yields the result since 
    \begin{displaymath}
        \frac{(1-(C+1)\varepsilon)^2 }{1+\varepsilon}>1-(2C+3)\varepsilon.
    \end{displaymath}
\end{proof}

\begin{lemma}[Bulk range, $z\leq p<x$]\label{lem:bulkrangezx}
    Let $\Delta^{-1}\var_q<V<\Delta\var_q$. Then, for sufficiently large $q$, there exists a constant $c(\varepsilon)>0$ such that 
    \begin{equation*}
        \mathcal{I}_{1/2}(\varepsilon V;z,x)\ll q \exp(-c(\varepsilon) \Delta V).
    \end{equation*}
\end{lemma}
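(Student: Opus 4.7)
The plan is to apply Markov's inequality with a $2k$-th moment and then Proposition~\ref{prop:key}. By positivity of the weights,
\begin{displaymath}
    \mathcal{I}_{1/2}(\varepsilon V; z, x) \leq (\varepsilon V)^{-2k}\sum_{\varphi \in \Bc_\new(Y_0(qD))} \frac{h(is_\varphi)}{L(1,\sym^2\varphi)} \bigl(\mathcal{S}_{1/2}(\varphi; z, x)\bigr)^{2k},
\end{displaymath}
and the right-hand side fits the setup of Proposition~\ref{prop:key} with $b(p) = p^{-1/\log x}|\lambda_{f_1}(p)\lambda_{f_2}(p)+\lambda_F(p)^2|\log(x/p)/\log x$ for $z \leq p < x$ coprime to $Dq$, and $b(p)=0$ otherwise, so $|b(p)| \leq |\lambda_{f_1}(p)\lambda_{f_2}(p)+\lambda_F(p)^2|$.

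The main technical input will be the variance estimate $\sum_{z \leq p < x} b(p)^2/p \ll \log \Delta$. By the Hecke relations as in \eqref{eq:boundb(p)}, $b(p)^2$ decomposes as a constant ($3$ or $6$, depending on whether $f_1 = f_2$) plus Dirichlet coefficients of a fixed collection of $L$-functions $L(s,\pi)$, each holomorphic in the extended Selberg class by Lemmas~\ref{lem:sym2sym4} and~\ref{lem:automorphyRankinSelberg}. Mertens' theorem gives $\sum_{z\leq p < x}1/p = 2\log\Delta + O(1)$ since $\log z = \log x/\Delta^2$, and Lemma~\ref{lem:L(1)=sum_p}---applicable because $z \geq (\log q)^{900}$ by \eqref{eq:x>log}---reduces each oscillating sum $\sum_{z\leq p < x}\lambda_\pi(p)/p$ to $O(1)$, being a difference of two approximations of $\log L(1,\pi)$.

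The next step is to take $k = \lfloor \varepsilon V/(20A_0)\rfloor$, where $A_0$ is the constant such that $\log x = 9A_0\log q/(\varepsilon V)$. Then $k\log x \leq (9/20)\log q$, so $x^k < qD/2\pi$ as required, and $k\geq 1$ for large $q$ by the lower bound on $V$. Using $(2k)!/k! \leq \sqrt 2 (4k/e)^k$, Proposition~\ref{prop:key} yields
\begin{displaymath}
    \mathcal{I}_{1/2}(\varepsilon V; z, x) \ll q\left(\frac{C\log\Delta}{A_0 e \varepsilon V}\right)^{k} + q^{-1/2}\exp(O(k\log x)),
\end{displaymath}
where the off-diagonal term, using the Kim--Sarnak bound $|b(p)|\ll p^{7/32}$, is bounded by $q^{1-\delta}$ for some absolute $\delta > 0$.

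The crux of the argument will be the lower bound $\log V \gg \Delta$, which follows from $V\geq \Delta^{-1}\var_q \gg \log\log q/\Delta = e^{\Delta -\log\Delta}$ via Lemma~\ref{lem:boundmuvar}. Taking logs of the main term gives $-k\log(A_0 e\varepsilon V/(C\log\Delta)) \leq -c\, k\log V \leq -c(\varepsilon)\Delta V$ for some $c(\varepsilon)$ of order $\varepsilon$, since $k \asymp \varepsilon V$. The off-diagonal $q^{1-\delta}$ is absorbed because $\Delta V \leq \Delta^2\var_q = o(\log q)$ in the bulk range, so $q\exp(-c(\varepsilon)\Delta V) \gg q^{1-o(1)}$ dominates $q^{1-\delta}$ for large $q$. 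The main obstacle is the constraint $x^k < qD/2\pi$, which prevents using the Gaussian-optimal choice $k\asymp(\varepsilon V)^2/\log\Delta$; one is forced to the largest allowed $k\asymp \varepsilon V$, but fortunately the lower bound $\log V \gg \Delta$ imposed by the lemma's range compensates for this loss.
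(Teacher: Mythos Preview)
Your proof is correct and follows the same approach as the paper: Markov's inequality, Proposition~\ref{prop:key} with $b(p)$ supported on $z\le p<x$, the variance estimate $\sum b(p)^2/p\ll\log\Delta$ via Lemma~\ref{lem:L(1)=sum_p}, and the choice $k\asymp \varepsilon V$. One minor correction: to verify that Lemma~\ref{lem:L(1)=sum_p} applies at the lower endpoint $z$, you should use the \emph{upper} bound $V<\Delta\var_q$ together with $\var_q\ll(\log q)^{15/16}$ from Lemma~\ref{lem:boundmuvar} (as the paper does), not \eqref{eq:x>log}; the latter only gives $x\ge(\log q)^{900}$, and since $z=x^{1/\Delta^2}$ this alone does not force $z\ge(\log q)^{2+\varepsilon}$.
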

\begin{proof}
    Applying Proposition~\ref{prop:key}, we arrive at the upper bound
    \begin{align*}
        \ll q\frac{(2k)!}{k!}\left( \frac{1}{2\varepsilon^2 V^2 } \sum_{z\leq p<x,p\nmid D} \frac{b(p)^2}{p}\right)^k+q^{-1/2}  \left(2\sum_{z\leq p<x,p\nmid D}|b(p)|\right)^{2k},
    \end{align*}
    with $b(p)$ as in (\ref{eq:b(p)}). 
    Now, for $\pi \in \{\sym ^2 F, \sym^2 F\otimes f_1\otimes f_2,\ldots, \}$, one of the forms appearing in (\ref{eq:boundb(p)}), we have $|\lambda_\pi(p)|\ll p^{7/16}$ by the Kim--Sarnak bound \eqref{eq:KimSarnak}. Note also that, by Lemma~\ref{lem:boundmuvar}, in the range $\Delta^{-1}\var_q<V<\Delta\var_q$ it holds for sufficiently large $q$ that
    \begin{displaymath}
        x\geq z\geq \exp\left( O_\varepsilon\left(\frac{\log q}{\Delta^3 (\log q)^{15/16} }\right) \right)\geq (\log q)^{100}.
    \end{displaymath}
    This means that Lemma~\ref{lem:L(1)=sum_p} applies and yields
    \begin{align*}
    \sum_{z\leq p<x, p\nmid D}\frac{\lambda_\pi(p)}{p}=\sum_{ p<x, p\nmid D}\frac{\lambda_\pi(p)}{p}-\sum_{p<z, p\nmid D}\frac{\lambda_\pi(p)}{p}=O(1).
    \end{align*}
    Thus, we conclude that
    \begin{equation*}
    \sum_{z\leq p<x, p\nmid D}\frac{b(p)^2}{p}  \ll \log\log x-\log\log z= 2\log\Delta.
    \end{equation*}
    Applying the bound \eqref{eq:2k/k}, we arrive at 
    \begin{align*}
    \mathcal{I}_{1/2}(\varepsilon V;z,x) \ll 
    q\left( \frac{c'(\varepsilon) k \log \Delta  }{ V^2 } \right)^k+q^{-1/2}  \left(c' x^{23/16}\right)^{2k},
    \end{align*}
    for some constants $c'(\varepsilon),c'>0$. 
    Now, we put $k= \lfloor \varepsilon V/900 A \rfloor$, which is admissible since
    \begin{align}\label{eq:admissiblez}
    k\cdot \log x\leq \frac{\varepsilon V }{100} \frac{\log q}{\varepsilon V}\leq \frac{\log q}{100}.
    \end{align}
    The same bound shows that 
    \begin{align*}
    \left(c' x^{23/16}\right)^{2k}\ll q^{23/800+o(1)}, 
    \end{align*}
    so we arrive at the bound 
    \begin{align*}
    \mathcal{I}_{1/2}(\varepsilon V;z,x) \ll q\exp\left(-c(\varepsilon) V \log \frac{V}{\log \Delta}\right)+1.  
    \end{align*}
    Since $V/\log \Delta\geq (\log\log q)^{1+o(1)}$, we obtain the result (recalling that $\Delta=\log \log \log q$).
\end{proof}

For $V$ very large, we have the following crude estimate for the contribution from $\sigma=1/2$.

\begin{lemma}[Large range, $\sigma=1/2$]\label{lem:largerange1/2}
    Let $\Delta\var_q<V<A\log q/\log\log q$. Then there exists a constant $c(\varepsilon)>0$ such that 
    \begin{equation*}
        \mathcal{I}_{1/2}((1-(C+1)\varepsilon)V)\ll q \exp\left( -c(\varepsilon)V\log \Delta \right). 
    \end{equation*}
\end{lemma}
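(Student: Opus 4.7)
The plan is to proceed exactly as in Lemma~\ref{lem:bulkrange}, combining the Markov-type inequality with Proposition~\ref{prop:key}, but exploiting that in this range $V$ is large enough that the truncation parameter $x = \exp(9A\log q/(\varepsilon V))$ from \eqref{eq:x} is substantially smaller than in the bulk regime. This smaller value of $x$ allows us to take a much higher moment $2k$ while still respecting the key constraint $x^k < qD/2\pi$ of Proposition~\ref{prop:key}, and the resulting savings compensate for the absence of an optimal matching between $k$ and $V^2/\var_q$.

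First I would write, for a positive integer $k$ to be determined,
\[
\mathcal{I}_{1/2}((1-(C+1)\varepsilon)V) \leq \frac{1}{((1-(C+1)\varepsilon)V)^{2k}}\sum_{\varphi}\frac{h(is_\varphi)}{L(1,\sym^2\varphi)}\mathcal{S}_{1/2}(\varphi;x)^{2k},
\]
and invoke Proposition~\ref{prop:key} with the coefficients $b(p)$ from \eqref{eq:b(p)}. By positivity, extending the sum to $p<q$ and applying Lemma~\ref{lem:L(1)=sum_p} exactly as in the proof of Lemma~\ref{lem:bulkrange}, one obtains $\sum_{p<x,\, p\nmid D} b(p)^2/p \leq (1+\varepsilon)\var_q$ for $q$ sufficiently large. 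Combined with \eqref{eq:2k/k}, this yields
\[
\mathcal{I}_{1/2}((1-(C+1)\varepsilon)V) \ll q\left(\frac{c_1(\varepsilon)\, k\, \var_q}{V^{2}}\right)^{k} + q^{-1/2}\exp\bigl(O(k\log x)\bigr).
\]

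The admissibility condition $x^{k} < qD/2\pi$ for Proposition~\ref{prop:key} translates, via the explicit form \eqref{eq:x} of $x$, into $k \leq \varepsilon V/(9A) + O(1)$. I would choose $k = \lfloor \varepsilon V/(18A)\rfloor$, so that $x^{k} \leq q^{1/2}$ and $k \asymp_{\varepsilon} V$. Since the hypothesis $V > \Delta \var_q$ gives $k\var_q/V^2 \leq c_2(\varepsilon)/\Delta$, the main (diagonal) term is bounded by
\[
q \left(\frac{c_3(\varepsilon)}{\Delta}\right)^{k} \ll q\exp\bigl(-c(\varepsilon)\, V \log\Delta\bigr),
\]
for $q$ large enough that $\Delta > 2 c_3(\varepsilon)$. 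For the off-diagonal contribution, the Kim--Sarnak bound \eqref{eq:KimSarnak} gives $|b(p)|\ll p^{7/32}$, whence $\sum_{p<x,\, p\nmid D}|b(p)| \ll x^{1+7/32}$, so this term is bounded by $q^{-1/2}\exp(O(k\log x)) \leq q^{-1/2 + O(\varepsilon)} = o(1)$ for $\varepsilon$ small, which is negligible compared to the main term.

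The only point requiring care is the verification that the chosen $k$ indeed satisfies $x^k < qD/2\pi$ and that the constants $c_i(\varepsilon)$ are tracked independently of $V$; both are immediate from the explicit form of $x$ in \eqref{eq:x}, since the product $k\log x$ is bounded by $\tfrac{1}{2}\log q + O(1)$ for our choice of $k$. No subtle cancellations or new automorphic inputs are needed beyond what was used in Lemma~\ref{lem:bulkrange}; the only qualitative difference is that here we willingly pick a suboptimal $k$ (smaller than $V^2/(2\var_q)$) in order to respect the constraint on $x^k$, and this is harmless because $V$ is already far in the tail, well beyond the Gaussian bulk.
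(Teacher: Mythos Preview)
Your approach is essentially the same as the paper's: apply the Markov inequality, invoke Proposition~\ref{prop:key} with the coefficients $b(p)$ from \eqref{eq:b(p)}, bound the diagonal via $\sum b(p)^2/p \leq (1+\varepsilon)\var_q$, and choose $k \asymp_\varepsilon V$ so that $k\var_q/V^2 \ll \Delta^{-1}$ in this range. The paper in fact picks $k = \lfloor \varepsilon V/(900A)\rfloor$, ensuring $k\log x \leq (\log q)/100$.

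There is, however, a slip in your treatment of the off-diagonal term. With your choice $k = \lfloor \varepsilon V/(18A)\rfloor$ you correctly have $x^k \leq q^{1/2}$, but the off-diagonal contribution from Proposition~\ref{prop:key} is $q^{-1/2}(2\sum_{p<x}|b(p)|)^{2k}$, and since $|b(p)|\ll p^{7/32}$ this is of size $q^{-1/2} x^{2k(1+7/32)+o(k)}$. With $k\log x = \tfrac12\log q$ this gives roughly $q^{-1/2+1+7/32+o(1)} = q^{1/2+7/32+o(1)}$, not $q^{-1/2+O(\varepsilon)}$ as you claim; the implicit constant in your $\exp(O(k\log x))$ is about $2.4$, not $O(\varepsilon)$. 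The fix is easy: either observe that $q^{1/2+7/32+o(1)}$ is still dominated by the right-hand side $q\exp(-c(\varepsilon)V\log\Delta) = q^{1-o(1)}$, or simply shrink $k$ by a fixed factor (as the paper does with the constant $900$) so that $k\log x$ is a small multiple of $\log q$ and the off-diagonal is genuinely $o(q^{1/2})$. Either way the argument goes through.
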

\begin{proof}
    We may follow the first steps of the proof of Lemma~\ref{lem:bulkrangezx} and arrive at the analogue of \eqref{eq:q()^k}, namely
    \begin{equation*}
        \mathcal{I}_{1/2}((1-(C+1)\varepsilon)V) \ll q \left( \frac{2k(1+\varepsilon)\var_q}{e(1-(C+1)\varepsilon)^2 V^2 } \right)^k+q^{-1/2} x^{O(k)} .  
    \end{equation*}
    We pick $k=\lfloor \varepsilon V/900 A\rfloor $, which is admissible since
    \begin{align}\label{eq:admissible}
        k\cdot \log x\leq \frac{\varepsilon V}{100} \frac{\log q}{\varepsilon V}\leq \frac{\log q}{100}.
    \end{align}
    For some $c'(\varepsilon)>0$, we obtain the bound
    \begin{displaymath}
        \ll q \left( c'(\varepsilon) \frac{\var_q}{V} \right)^{\varepsilon V},
    \end{displaymath}
    which implies the result since $\var_q/V\leq \Delta^{-1}$.
\end{proof}

\subsubsection{Contribution from $p^{-n/2}$ with $n\geq 2$}
Next, we bound the contribution with $n\in \{2,\ldots, 5\}$, which can be seen to be negligible even using crude estimates.

\begin{lemma}[Higher order terms]\label{lem:higherorderterms}
    Let $\Delta^{-1}\var_q<V<A \log q/\log\log q$. Then, for $q$ sufficiently large, there exists a constant $c(\varepsilon)>0$ such that for $2\leq n\leq 5$:
    \begin{equation*}
        \mathcal{I}_{n/2}(\varepsilon V)\ll q \exp\left( -c(\varepsilon)V\log V \right). 
    \end{equation*}
\end{lemma}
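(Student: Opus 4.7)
The proof applies Chebyshev's inequality to a suitable $2k$-th moment of $\mathcal{S}_{n/2}(\varphi)$ and then invokes Proposition~\ref{prop:keyvariant}. For $n \geq 2$, the denominator $p^{n/2}$ makes the relevant arithmetic sums essentially convergent, so that the resulting bound on $\mathcal{I}_{n/2}(\varepsilon V)$ becomes super-polynomial in $V$.

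First, decompose $\mathcal{S}_{n/2}(\varphi)$ from Proposition~\ref{prop:GRHbound} as a finite linear combination of at most $2n \leq 10$ pieces of the form $\mathcal{T}(\varphi) = \sum_{p < x^{1/n},\, p \nmid Dq} b(p)\, \lambda_\varphi(p^m)/p^{1/2}$ with $1 \leq m \leq n$. Using the identity $\sum_{m'=0}^n c(m',n)\lambda_{\pi_i}(p^{m'}) = \alpha_{\pi_i}(p,1)^n + \alpha_{\pi_i}(p,2)^n$ and the Kim--Sarnak bound \eqref{eq:KimSarnak} factor by factor, one gets
\[
    |b(p)| \ll p^{7n/32 - (n-1)/2} = p^{(16-9n)/32} \leq p^{-1/16},
\]
since $n \geq 2$. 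Consequently the quantities appearing in Proposition~\ref{prop:keyvariant} satisfy
\[
    A := \sum_{p} \frac{b(p)^2}{p} \ll 1, \qquad B := \sum_{p} \frac{|b(p)|^3}{p^{3/2}} \ll 1, \qquad C := \sum_{p < x^{1/n}} |b(p)| \ll x^{(48-9n)/(32n)}.
\]
Chebyshev combined with Proposition~\ref{prop:keyvariant} at $d = m$---admissible by \eqref{eq:x} whenever $k \leq c_0 \varepsilon V$ for a small absolute constant $c_0 > 0$---then yields
\[
    \mathcal{I}_{n/2}(\varepsilon V) \ll q\, (\varepsilon V)^{-2k} \!\sum_{2\ell_1 + 3\ell_2 = 2k} \frac{(2k)!}{\ell_1!\, \ell_2!} A^{\ell_1} B^{\ell_2} + q^{-1/2 + o(1)}.
\]

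For the main term, write $t = \ell_2/k \in [0, 2/3]$; Stirling shows that the coefficient of $k\log k$ in $\log[(2k)!/(\ell_1!\ell_2!)]$ equals $2 - \ell_1/k - \ell_2/k = 1 + t/2$, which is maximized at $t = 2/3$. Since $A, B = O(1)$, this gives
\[
    \sum_{2\ell_1 + 3\ell_2 = 2k} \frac{(2k)!}{\ell_1!\, \ell_2!} A^{\ell_1} B^{\ell_2} \ll (2k)^{4k/3} e^{O(k)}.
\]
Taking $k = \lfloor c_0 \varepsilon V \rfloor$ at the admissibility boundary, a direct computation gives
\[
    (\varepsilon V)^{-2k}\,(2k)^{4k/3}\,e^{O(k)} = \exp\!\Bigl(\tfrac{4k}{3}\log(2k) - 2k\log(\varepsilon V) + O(k)\Bigr) = \exp\!\Bigl(-\tfrac{2c_0\varepsilon}{3}\, V\log V + O(V)\Bigr),
\]
from which the claimed bound $\mathcal{I}_{n/2}(\varepsilon V) \ll q\exp(-c(\varepsilon)\, V\log V)$ follows, since $V \geq \Delta^{-1}\var_q \to \infty$ in the relevant range.

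The main delicacy is the combinatorial analysis of the multinomial sum: identifying $\ell_2 \asymp 2k/3$ (rather than the naive $\ell_2 = 0$) as the dominant configuration via Stirling. The fact that $k$ is forced to be linear in $V$, by the admissibility condition of Proposition~\ref{prop:keyvariant}, is precisely what transforms the naive $V$-linear saving into the claimed $V\log V$ saving; if $k \asymp V^{3/2}$ were accessible then the optimized Markov bound would be merely $O(q)$, with no saving, so this lemma crucially exploits the mismatch between the optimal moment order and the maximal admissible one.
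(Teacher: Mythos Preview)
Your argument is correct and follows the same route as the paper: decompose $\mathcal{S}_{n/2}(\varphi)$ into pieces of the shape $\sum_p b(p)\lambda_\varphi(p^m)/p^{1/2}$, apply Chebyshev and Proposition~\ref{prop:keyvariant}, and choose $k\asymp \varepsilon V$ at the admissibility boundary. Your Stirling analysis of $\sum_{2\ell_1+3\ell_2=2k}(2k)!/(\ell_1!\ell_2!)$ is in fact more careful than the paper's: the paper uses the ``crude estimate'' $\sum 1/(\ell_1!\ell_2!)\ll k/k!$, which is false (the boundary term $\ell_1=0$, $\ell_2=2k/3$ already contributes $1/(2k/3)!\gg k/k!$), whereas you correctly identify the dominant contribution and obtain the bound $(2k)^{4k/3}e^{O(k)}$. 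Both versions yield the claimed $\exp(-c(\varepsilon)V\log V)$.

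Two small points. First, the off-diagonal is not $q^{-1/2+o(1)}$: with $C\ll x^{15/32}$ and $x^k\leq q^{9Ac_0}$ one only gets $q^{-1/2+O(c_0)}$ times factors of size $e^{O(V)}$. This is still a negative power of $q$ for $c_0$ small enough and hence dominated by $q\exp(-c(\varepsilon)V\log V)$, so the conclusion is unaffected. Second, your final paragraph of commentary is misleading. The unconstrained minimum of $(\varepsilon V)^{-2k}(2k)^{4k/3}e^{O(k)}$ over $k$ is attained at $k\asymp V^{3/2}$ and equals $\exp(-cV^{3/2})$, which is \emph{stronger} than $\exp(-c(\varepsilon)V\log V)$; the admissibility constraint $k\ll \varepsilon V$ only forces a suboptimal choice of $k$ and hence a weaker (but still sufficient) bound, rather than being the source of the saving.
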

\begin{proof}
    We recall for the convenience of the reader that
    \begin{multline*}
        \mathcal{S}_{n/2}(\pi_1\otimes \pi_2\otimes \pi_3;x )\\
        :=\frac{1}{n}\sum_{p^n \leq x, p\nmid D}\frac{\prod_{i=1}^2 (\sum_{m=0}^nc(m,n)\lambda_{\pi_i}(p^m))(\sum_{m=1}^nc(m,n)\lambda_{\pi_3}(p^m))}{p^{n/2+n/\log x}}\frac{\log (x/p^n)}{\log x},
    \end{multline*}
    for certain constants $c(m,n)\in \Z$.
    We pull out the sum $\sum_{m=1}^n c(m,n)\lambda_{\varphi}(p^m)$ in $\mathcal{S}_{n/2}(\varphi)$ and thus organize the latter according to the factors $\lambda_\varphi(p^m)$ for $1 \leq m \leq n$, writing
    \begin{displaymath}
        \mathcal{S}_{n/2}(\varphi) = \sum_{m = 1}^n \mathcal{S}_{n/2, m}(\varphi).
    \end{displaymath}
    By the principle of subadditivity in \eqref{eq:P(X>V)}, we can bound $\mathcal{I}_{n/2}(\varepsilon V)$ by the sum of
    \begin{displaymath}
        \mathcal{I}_{n/2, m}(\varepsilon V / n):= \sum_{\varphi\in \Bc_\new(Y_0(qD)) \colon \mathcal{S}_{n/2, m}(\varphi) > \varepsilon V / n} \frac{h(is_\varphi)}{L(1,\sym^2 \varphi)}. 
    \end{displaymath}
    
    We follow the same argument as in the proof of Lemma~\ref{lem:bulkrangezx}, using now Proposition~\ref{prop:keyvariant}. 
    We conclude that $\mathcal{I}_{n/2}(\varepsilon V)$ is bounded by 
    \begin{multline*}
    q \frac{6^{2k}}{(\varepsilon V / n)^{2k}} \sum_{2\ell_1+3\ell_2=2k} \frac{(2k)!}{\ell_1!\ell_2!} \left(\sum_{p<\sqrt{x}, p\nmid D} \frac{b(p)^2}{p^{n}}\right)^{\ell_1}\left(\sum_{p<\sqrt{x}, p\nmid D} \frac{|b(p)|^3}{p^{3n/2}}\right)^{\ell_2} \\
    +q^{-1/2} \frac{6^{2k}}{(\varepsilon V / n)^{2k}}  \left(\sum_{p<\sqrt{x},p\nmid D}\frac{|b(p)|}{p^{n/2}}\right)^{2k},
    \end{multline*} 
    where
    \begin{align*}
    |b(p)|&=\left|\left(\sum_{m=0}^nc(m,n)\lambda_{f_1}(p^n)\right)\left(\sum_{m=0}^nc(m,n)\lambda_{f_2}(p^n)\right)+\left(\sum_{m=0}^nc(m,n)\lambda_{F}(p^n)\right)^2\right|\\
    &\ll p^{7n/32}.
    \end{align*}
    where we used the Kim--Sarnak bound \eqref{eq:KimSarnak}. 
    We see that the series over~$p$ are convergent and we can find $c > 0$ such that
    \begin{displaymath}
        \sum_{p<\sqrt{x}, p\nmid D} \frac{b(p)^2}{p^{n}} < c, \quad \sum_{p<\sqrt{x}, p\nmid D} \frac{|b(p)|^3}{p^{3n/2}} < c.
    \end{displaymath}
    Inserting this, for $q$ sufficiently large, we arrive at the upper bound
    \begin{equation*}
    q \sum_{2\ell_1+3\ell_2=2k} \frac{(2k)!}{\ell_1!\ell_2!} \left(\frac{30}{\varepsilon V}\right)^{2k} c^{\ell_1+\ell_2}+q^{-1/2} (36x)^k,  
    \end{equation*}
    for some constant $c>1$. Using that $\ell_1+\ell_2\leq k$ and the crude estimate 
    $$ \sum_{2\ell_1+3\ell_2=2k} \frac{1}{\ell_1!\ell_2!} \ll k\cdot \frac{1}{k!},$$
    we arrive at 
    \begin{align*}
    q k\left(\frac{c'k}{\varepsilon^2 V^2}\right)^{k}+q^{-1/2} (36 x)^k, 
    \end{align*}
    for some constant $c'>0$. 
    We now pick $k=\lfloor \varepsilon V/900 A \rfloor$, which is admissible since $k\log x\leq \frac{\varepsilon V}{100} \frac{\log q}{\varepsilon V}= \frac{\log q}{100}$. 
    In particular, $(cx)^k\ll q^{1/100+o(1)}$.
    The bound now follows for $q$ large enough by observing that $V$ goes to infinity when $q \to \infty$ in our range.
\end{proof}

\subsection{Putting everything together}
By inserting the bounds from Lemmas \ref{lem:smallrange}, \ref{lem:bulkrange}, \ref{lem:bulkrangezx} and   \ref{lem:higherorderterms}, which cover the entire range of $V$ by the inequalities (\ref{eq:GRHbound}), (\ref{eq:minkowskiineq1}) and (\ref{eq:minkowskiineq2}) , we have for any~$\varepsilon>0$ sufficiently small and $q$ sufficiently large in terms of $\varepsilon$ that 
\begin{align*}
     &\langle |F|^2, f_1 \cdot (l_qf_2)\rangle_q\\
     \ll &\frac{1}{q L(1,\sym^2 F)}\sum_{\varphi\in \Bc_\new(Y_0(qD))} \frac{h(is_\varphi)}{L(1,\sym^2 \varphi)} \mathcal{L}(\varphi)^{1/2}\\
     \ll& \frac{e^{(1-\varepsilon)\mu_q/2}}{q L(1,\sym^2 F)} \Biggr(\int_{\Delta^{-1} \var_q}^{A\frac{\log q}{\log\log q}} e^{V/2}\Biggr(\mathcal{I}_{1/2}((1-O(\varepsilon))V;z)+\mathcal{I}_{1/2}(\varepsilon V;z,x)\\
&+\sum_{n=2}^5\mathcal{I}_{n/2}(\varepsilon V)+\mathcal{I}_\mathrm{tail}\Biggr)dV+ O(1)\Biggr)\\
     \ll &\frac{e^{(1-\varepsilon)\mu_q/2}}{ L(1,\sym^2 F)} \Biggr(\int_{\Delta^{-1} \var_q}^{\Delta \var_q} e^{V/2-(1-O(\varepsilon))V^2/2\var_q}dV+\int_{\Delta^{-1} \var_q}^{\Delta \var_q}e^{V/2-c(\varepsilon)V\Delta}dV\\
     &+\int_{\Delta \var_q}^{A\frac{\log q}{\log\log q}} e^{V/2-c(\varepsilon)V\log V}dV+\int_{\Delta^{-1} \var_q}^{A\frac{\log q}{\log\log q}} e^{V/2-c(\varepsilon)V\log \Delta}dV +q^{-100}\Biggr).
\end{align*}
We see that the last three integrals can be bounded by $O_\varepsilon(1)$ (since the exponent is negative for $q$ sufficiently large) and so the total contribution from these terms is $O(e^{(1-\varepsilon)\mu_q/2}/L(1,\sym^2 F))$.
To estimate the remaining integral we extend the range of integration and apply the formula (\ref{eq:CLTformula}) to arrive at the bound
\begin{align*}
    \ll \frac{\sqrt{\var_q}e^{(1-\varepsilon)\mu_q/2+(1-O(\varepsilon))^{-1}\var_q/8}}{ L(1,\sym^2 F)}. 
\end{align*}
Since $L(1,\sym^2 F)\gg (\log \log q)^{-O(1)}$ from Corollary \ref{cor:sym2at1} we conclude the bound (\ref{eq:whatwewant}) and consequently Theorem~\ref{thm:mainestimate} follows.

\section*{Ackowledgements}
We are grateful to Valentin Blomer and Farrell Brumley for enlightening conversations regarding our conjecture. We would also like to thank Edgar Assing and Félicien Comtat for useful discussions. 

The first author was supported by a public grant from the Fondation Math\'{e}matique Jacques Hadamard.
The second author has received funding from the European Union’s Horizon 2020 research and innovation programme under the Marie Skłodowska-Curie grant agreement No 101034255.

\printbibliography

\end{document}